\tikzset{
%Define standard arrow tip
>=stealth',
%Define style for different line styles
help lines/.style={dashed, thick},
axis/.style={<->},
important line/.style={thick},
connection/.style={thick, dotted},
}
\newcommand{\LD}{\langle}
\newcommand{\RD}{\rangle}
\newcommand{\CC}{\mathbb{C}}
\newcommand{\cal}{\mathcal}
\newcommand{\be}{\mathbf{e}}
\newtheorem{thm}{Theorem}[section]
\newtheorem{lm}[thm]{Lemma}
\newtheorem{conj}[thm]{Conjecture}
\theoremstyle{definition}
\newtheorem{rem}[thm]{Remark}
\newtheorem{df}[thm]{Definition}
\theoremstyle{remark}
\title[LG/CY Correspondence of all genera for elliptic orbifold $\mathbb{P}^1$]{Landau-Ginzburg/Calabi-Yau Correspondence of all genera for elliptic orbifold $\mathbb{P}^1$}
\author{Marc Krawitz}
\address{Chicago, IL}
\email{marc.krawitz@gmail.com}
\author{Yefeng Shen}
\address{Department of Mathematics, University of Michigan, Ann Arbor, MI}
\email{yfschen@umich.edu}
\begin{document}
% Custom units for the picture environment
\setlength{\unitlength}{1mm}

\maketitle

\tableofcontents
%\addtocontents{toc}{\protect\setcounter{tocdepth}{1}}
%\addcontentsline{toc}{section name}{entry text}

\section{Introduction}
    There is a great deal of interest recently in studying the so called Landau-Ginzburg /Calabi-Yau correspondence or LG/CY correspondence, a famous duality from phys-ics. Mathematically, the LG/CY correspondence concerns about the equivalence of two mathematical theories originating from a quasihomogeneous polynomial of Calabi-Yau type. A polynomial $W: \mathbb{C}^N\rightarrow \mathbb{C}$ is quasihomogenous if
    there is an $N$-tuple of rational numbers (weights or charges) $(q_1, \cdots, q_N)$ such that for any $\lambda\in \mathbb{C}^*$,
    $$W(\lambda^{q_1}z_1\cdots, \lambda^{q_N}z_N)=\lambda W(z_1, \cdots, z_N).$$
    We assume that $W$ is nondegenerate in the sense that it defines an isolated singularity at the origin. $W$ is called of {\em Calabi-Yau type}
    if $\sum_i q_i=1$.  Another piece of data is a subgroup $G$ of the diagonal symmetry group $G_{\textrm{max}}$, where
    $$G_{\textrm{max}}:=\{\textrm{Diag}(\lambda_1, \cdots, \lambda_N); W(\lambda_1 z_1, \cdots, \lambda_N z_N)=W(z_1, \cdots, z_N)\}$$
    $G_{\textrm{max}}$ always contain a special element $J=\textrm{Diag}(e^{2\pi i q_1},\cdots, e^{2\pi i q_N}).$  $G\subset G_{\textrm{max}}$ is {\em admissible} if it contains $J$.

    The geometric implication of a Calabi-Yau type quasihomogeneous polynomail is that $W=0$ defines a Calabi-Yau hypersurface $X_W$ in the weighted projective space
    $WP(c_1, \cdots, c_N)$, where $q_i=c_i/d$ for common denominator $d$. $G$ acts naturally on $X_W$ with the kernel $\langle J \rangle$.
    Hence $\widetilde{G}:=G/\LD\,J\RD$ acts faithfully on $X_W$. One side of the LG/CY-correspondence is the Gromov-Witten theory of the quotient of the Calabi-Yau hypersurface, $\{X_W=\{W=0\}\}/\widetilde{G}$.
    Gromov-Witten theory is now well-known in mathematics. Its main elements are
    \begin{itemize}
    \item[(1)] a state space: Chen-Ruan orbifold cohomology $H^*_{CR}(X_W/\widetilde{G})$;
    \item[(2)] numerical
    invariants $\langle \tau_{l_1}(\alpha_1), \cdots, \tau_{l_n}(\alpha_n)\rangle_{g,n,\beta}$ defined by  a virtual counting of stable maps.
    Here, $g$ is the genus and $\beta$ is the fundamental class of stable maps. One often assembles them into a generating function
    $\cal F^{GW}_g(q)$ in infinitely many variable indexed by a basis $\{\alpha_i\}$ of the state space, a $z$-variable to keep track of the integer $l$ and a quantum or K\"{a}hler variable
    $q$ to keep track of $\beta$. One can further sum over genera to define the total ancestor potential function
    $$\cal A_{GW}(q)=\sum_g \hbar^{2g-2}{\cal F}^{GW}_g(q).$$
    We should emphasis that ${\cal F}^{GW}_g$ is only a formal power series;
    \item[(3)] $\langle \tau_{l_1}(\alpha_1), \cdots, \tau_{l_n}(\alpha_n)\rangle_{g,n,\beta}$ satisfies a set of axioms referred as {\em cohomological field theory axioms} (see \cite{ChenR} for details).
        \end{itemize}

    The other side of the LG/CY-correspondence is the FJRW-theory of the singularity $(W, G)$ constructed by Fan-Jarvis-Ruan \cite{FJR2} based on a proposal
    of Witten. FJRW-theory is very different from Gromov-Witten theory. However, it shares the same general structure with Gromov-Witten theory
    as an example of cohomological field theory (see the section 3 for details). For example, it has (1) a state space ${\cal H}_{FJRW}$; (2) numerical
    invariants by a virtual counting of solutions of the Witten equation, its generating functions ${\cal F}^{FJRW}_g(s), {\cal A}_{FJRW}(s)$
    ($s$ is a certain degree 2 variable playing the role of K\"{a}hler parameter);
    (3) satisfies the cohomological field theory axioms.

    Motivated by physics, Yongbin Ruan has formulated a striking mathematical conjecture to equate the two theories \cite{R}. The main goal of this paper
    is to prove this conjecture for elliptic orbifold $\mathbb{P}^1$. Ruan's conjecture is stated as follows.

   \begin{conj}
   \begin{itemize}
   \item[(1)] There is a graded vector space isomorphism ${\cal H}_{FJRW}\rightarrow H^*_{CR}(X_W/\widetilde{G}).$ Hence, we can identify the two state spaces.
   \item[(2)] There is a degree-preserving $\CC[z, z^{-1}]$-valued linear symplectic isomorphism
       ${\mathbb U}$ of so-called Givental symplectic vector spaces and a choice of analytic
       continuation of Givental cones ${\cal L}_{FJRW}$ and ${\cal L}_{GW}$ with respect to the K\"{a}hler parameter
    such that ${\mathbb U}_{\rm LG/CY} ({\cal L}_{FJRW})={\cal L}_{GW}.$
    \item[(3)] Up to an overall constant, the total potential functions up to a choice of
    analytic continuation are related by
    quantization of ${\mathbb U}_{\rm LG/CY} $; {i.e.}
    $${\cal A}_{GW}=\widehat{{\mathbb U}_{\rm LG/CY}}({\cal A}_{FJRW}).$$
    \end{itemize}
    \end{conj}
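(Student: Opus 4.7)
\medskip

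\noindent\textbf{Proof proposal.}

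The plan is to split the three parts and reduce the all-genus statement (3) to a genus-zero statement using the theorem of Teleman on the reconstruction of semisimple cohomological field theories. For part (1), I would write down an explicit basis of the FJRW state space $\mathcal{H}_{FJRW}$ in terms of $G$-twisted sectors and match it with the orbifold Chen--Ruan cohomology $H^*_{CR}(X_W/\widetilde{G})$ sector by sector; for an elliptic orbifold $\mathbb{P}^1$ the ramification data is very restricted (only the types $(3,3,3)$, $(2,4,4)$, $(2,3,6)$, and $(2,2,2,2)$ arise), so this is a finite combinatorial check that can be carried out polynomial by polynomial, using the Krawitz mirror construction to align degrees. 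Once the state spaces are identified, I would match the pairings and the grading to promote the isomorphism to a degree-preserving isometry.

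For part (2), the key is that both the FJRW and GW genus-zero data carry the structure of a Frobenius manifold of dimension equal to $\dim \mathcal{H}_{FJRW}=\dim H^*_{CR}(X_W/\widetilde G)$ with a flat Euler field. I would compute all genus-zero three-point FJRW correlators by combining the concavity axiom, the Ramond vanishing, and the WDVV equation, and show these correlators satisfy a reconstruction principle. On the GW side, the analogous Frobenius manifold has been determined by Satake--Takahashi and Milanov--Ruan and is known to be isomorphic to the Saito Frobenius manifold attached to the primitive form of the corresponding simple elliptic singularity. I would then identify the FJRW Frobenius manifold with the same Saito manifold, so that the two Frobenius structures agree after an explicit analytic continuation in the K\"ahler parameter. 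Once the Frobenius manifolds are identified, Givental's formalism produces the required symplectic transformation $\mathbb{U}_{\rm LG/CY}$ matching the Lagrangian cones $\mathcal{L}_{FJRW}$ and $\mathcal{L}_{GW}$.

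For part (3), both theories are generically semisimple on the big phase space (even though the Frobenius manifold is not semisimple at the origin of either side, because the elliptic orbifold case is of central charge $\hat c=1$, semisimplicity holds generically after turning on Novikov parameters and bulk deformations). By Teleman's classification, a generically semisimple cohomological field theory is determined by its underlying Frobenius manifold together with the $R$-matrix produced from Givental's quantization; and for Frobenius manifolds with a flat Euler field and a choice of calibration the $R$-matrix is uniquely determined. Hence, once part (2) has identified the genus-zero data, the quantized version $\widehat{\mathbb{U}}_{\rm LG/CY}$ automatically sends $\mathcal{A}_{FJRW}$ to $\mathcal{A}_{GW}$ up to the usual overall constant coming from the Witten--Kontsevich $R$-normalization.

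I expect the main obstacle to be the analytic continuation in part (2). The FJRW potential is a formal power series in the degree-$2$ variable $s$ expanded at the Gepner/LG point, while the GW potential is a formal power series in the K\"ahler parameter $q$ expanded at the large-volume point; identifying them requires realizing both as expansions of a single global object, which in the elliptic orbifold $\mathbb{P}^1$ case forces the correlators to live in a ring of quasi-modular forms for an appropriate congruence subgroup. Establishing this modularity and identifying the correct change of variables between $s$ and $q$ is the transcendental heart of the proof; the remaining steps are then essentially formal consequences of Teleman's reconstruction.
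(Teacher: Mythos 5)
Your high-level architecture is close to the paper's for parts (1) and (2): part (1) is indeed a sector-by-sector identification (the paper quotes Chiodo--Ruan for the state space isomorphism and Krawitz for the Frobenius algebra isomorphism $\mathscr{Q}_W \cong H_{FJRW}^{W^T,G_{\textrm{max}}}$), and part (2) is carried out exactly as you suggest, by identifying both the FJRW and GW Frobenius manifolds with Saito's Frobenius manifold of the simple elliptic singularity at the Gepner point and at the large complex structure limit respectively, with Milanov--Ruan supplying the global B-model object that interpolates between the two expansion points. Where you diverge is part (3): you invoke Teleman's classification of semisimple cohomological field theories, whereas the paper reconstructs all genera explicitly from tautological relations (WDVV, Getzler's relation in genus one, and $g$-reduction in higher genus), reducing everything to a handful of $3$- and $4$-point correlators. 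The paper does note that the reconstruction ``also follows by Teleman'' at semisimple points, so your route is not wrong in spirit, and it is arguably cleaner as a uniqueness statement.

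However, there is a genuine gap that the Teleman route does not close and that you only gesture at in your last paragraph: the convergence of the ancestor correlator functions (Theorem 1.2 of the paper). Both $\cal{A}_{FJRW}$ and $\cal{A}_{GW}$ are a priori formal power series in the degree-two parameter, defined at points ($\textbf{t}=0$, resp.\ $q=0$) where the Frobenius manifold is \emph{not} semisimple; meanwhile the Saito--Givental higher-genus potential is defined only on the semisimple locus. To even state the claim that FJRW theory and GW theory are analytic continuations of one global object, one must (a) prove the generating functions converge near their respective limit points and (b) extend the Givental function across the non-semisimple locus. Teleman's theorem is a uniqueness statement with no effective content: it tells you the higher-genus correlators are determined by the genus-zero data, but it gives no bounds on their size, hence no convergence. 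The paper's explicit reconstruction is not a stylistic choice; it is what produces the factorial-type estimates $|\langle\cdots\rangle_{g,K}|\le C^{2g-2+K+L}(2g-2+K+L)!$ that yield a positive radius of convergence, and this convergence is then fed back into Milanov--Ruan's argument (their Lemma 3.2) to justify extending the Saito--Givental function to $\textbf{t}=0$. Identifying the quasi-modularity as the ``transcendental heart'' also slightly misplaces the difficulty: in this program quasi-modularity is an output of the global B-model, while the analytic input you must supply on the A-side is precisely the convergence you have deferred.
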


    Part (1) is called the {\em Cohomological LG/CY correspondence}, which has been verified in full generality by Chiodo-Ruan \cite{CR2}.
    Part (2) is the genus-0 LG/CY correspondence, which has been verified by Chiodo-Ruan \cite{CR1} for the quintic 3-fold, and by Chiodo-Iritani-Ruan
    \cite{CIR} for all Fermat hypersurfaces and $G=\langle J\rangle$. So far, not a single example is known for all genera. In this article,
    we will prove the above conjecture for three classes of orbifold $\mathbb{P}^1$. Namely, we consider three cubic polynomials with their maximal diagonal symmetry group.
    \begin{itemize}
    \item []$P_8:=x^3+y^3+z^3$,
    \item []$X^T_9:=x^3+xy^2+yz^2$,
    \item []$J^T_{10}:=x^3+y^3+yz^2$.
    \end{itemize}
    In Saito's convention for simple elliptic singularities, $P_8$ is also denoted by $\widetilde{E}_6$.
    It is easy to check that $X_{P_8}/\widetilde{G}_{\textrm{max}}=\mathbb{P}^1_{3,3,3}$, the orbifold $\mathbb{P}^1$ with three orbifold points of weights ($3,3,3)$; $X_{X^T_9}/\widetilde{G}_{\textrm{max}}=\mathbb{P}^1_{4,4,2}$ and $X_{J^T_{10}}/\widetilde{G}_{\textrm{max}}=\mathbb{P}^1_{6,3,2}$.
    They are quotients of elliptic curves. We will refer them as {\em elliptic orbifold $\mathbb{P}^1$}. For
    Ruan's conjecture to make sense, we need the generating function to be analytic with respect to the K\"{a}hler parameter. This is often a difficult problem in Gromov-Witten theory and interesting in its own right. Our first theorem (Theorem \ref{thm:convergent}) establishes it for both Gromov-Witten theory and FJRW-theory.
    It is convenient to consider the {\em ancestor correlator function} $\LD\LD \tau_{l_1}(\alpha_1), \cdots, \tau_{l_n}(\alpha_n)\RD\RD^{GW}_{g,n}({\bf t})$. (see the precise definition in Section 4)
    Among all the cases we consider, the state space is decomposed into $H_{<2}\oplus H_2$ where $H_2$ is a one-dimensional space of degree 2 classes and $H_{<2}$ is the  subspace of degree $<2$. Let ${\bf t}=(t, s)$. We can convert $s$ to the familiar $q$ variable by the substitution $q=e^s$.
    We  define $\LD\LD \tau_{l_1}(\alpha_1), \cdots, \tau_{l_n}(\alpha_n)\RD\RD^{FJRW}_{g,n}(\bf t)$ in the same way. The main difference is the absence of the $\beta$ variable.

    \begin{thm}\label{thm:convergent}

    \begin{itemize}
    \item[(1)] For the above three classes of elliptic orbifold $\mathbb{P}^1$'s,
    $$\LD\LD \tau_{l_1}(\alpha_1), \cdots, \tau_{l_n}(\alpha_n)\RD\RD^{GW}_{g,n}(t,s)$$
     converges to an analytic function near $t=0, Re(s)\ll0$ or $q=0$.
    \item[(2)]  For its FJRW counterparts,
    $$\LD\LD \tau_{l_1}(\alpha_1), \cdots, \tau_{l_n}(\alpha_n)\RD\RD^{FJRW}_{g,n}(t,s)$$
    converges to an analytic function near $t=0, s=0$.
    \end{itemize}
    (1) is often referred as the large volume limit while (2) can be referred to as the small volume limit.
    \end{thm}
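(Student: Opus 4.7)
The plan is to reduce the convergence question to analyticity of the genus-zero primary data (i.e., of the Frobenius manifolds) and then to propagate it to all higher-genus ancestor correlators via the Givental-Teleman reconstruction.

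First, the cohomological field theory axioms --- string and dilaton equations, topological recursion relations, divisor equation, and WDVV --- let us express $\LD\LD \tau_{l_1}(\alpha_1), \cdots, \tau_{l_n}(\alpha_n) \RD\RD_{g,n}(t,s)$ in terms of finitely many primary generating series. The grading axiom, combined with the fact that $H_{<2}$ consists of classes of conformal degree strictly less than $2$, forces the $t$-dependence to be polynomial, so the entire problem concentrates on the $s$-dependence (equivalently, the $q=e^s$-dependence in the GW case) at genus zero, together with the Givental quantization procedure at higher genus.

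Second, we establish analyticity of the genus-zero primary potentials. In the GW case, each of $\mathbb{P}^1_{3,3,3}$, $\mathbb{P}^1_{4,4,2}$, $\mathbb{P}^1_{6,3,2}$ is a global quotient of an elliptic curve by a cyclic group, so the basic genus-zero three-point invariants can be identified with theta series (or Eisenstein-type series) in $q$; such series are absolutely convergent in the unit disk $|q|<1$. In the FJRW case, the three polynomials $P_8$, $X^T_9$, $J^T_{10}$ define simple elliptic singularities whose Saito Frobenius structures are known to be analytic in a neighborhood of the origin, and this transports through the LG framework to analyticity of the FJRW Frobenius structure in $s$. In both settings, the WDVV equations then express all remaining primary correlators as analytic combinations of these convergent building blocks, yielding an analytic genus-zero potential.

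Third, with analytic Frobenius manifolds in hand, we invoke the Givental-Teleman theorem: at any semisimple point of the Frobenius manifold, the higher-genus total ancestor potential equals the action of the Givental R-matrix --- uniquely determined by the genus-zero structure and analytic on the semisimple locus --- on a product of Witten-Kontsevich tau functions, one for each canonical coordinate. Every ancestor correlator is therefore analytic on the semisimple locus inside the neighborhood of the origin produced in step two.

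The main obstacle is that the origin $t=0$, $s=0$ (respectively $q=0$) is non-semisimple for all three examples --- the Frobenius algebra degenerates there and acquires nilpotent directions --- so the Givental-Teleman formula does not apply at the origin directly. This is handled by combining analyticity on the semisimple locus (the complement of a proper analytic discriminant) with the a priori formal power series character of the ancestor correlators in $(t,s)$; a Hartogs-type extension across the discriminant then upgrades the function to an analytic one on a full neighborhood of the origin. Verifying that the R-matrix and the semisimple frame change themselves extend analytically up to the discriminant, rather than developing essential singularities there, is the subtlest point of this last step.
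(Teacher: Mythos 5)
There is a genuine gap, and it sits exactly where you flag ``the subtlest point.'' Your plan is to get analyticity on the semisimple locus from the Givental--Teleman formula and then push it across the discriminant to the origin. But the origin ($t=0$, $s=0$, resp.\ $q=0$) is non-semisimple, the discriminant is a hypersurface (codimension one), so no Hartogs-type extension applies without a boundedness estimate near the discriminant --- and the canonical coordinates collide and the $R$-matrix generically blows up there, so such boundedness is precisely what must be proven, not assumed. Worse, the logic is close to circular: a priori the ancestor correlator function at the origin is only a formal power series in $(t,s)$, and identifying it with the analytic function produced by Givental's formula off the discriminant already presupposes that the formal series converges somewhere. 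The paper is explicit about this (Remark 2.1): extending the Givental function to the non-semisimple locus is regarded as a hard open problem, and the convergence theorem you are asked to prove is an \emph{input} to that extension (via Lemma 3.2 of Milanov--Ruan), not a consequence of it. Your proposal inverts the intended direction of implication.

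There is a second, smaller gap at genus zero. Saying that WDVV ``expresses all remaining primary correlators as analytic combinations of convergent building blocks'' only controls each individual correlator (a finite algebraic expression); convergence of the generating series requires uniform growth estimates in the number of insertions $n$ (and in the degree $d$ for GW). This is exactly what the paper supplies and what your argument omits: the paper introduces a \emph{length} for each correlator (the number of WDVV applications needed to reduce it), proves by induction bounds of the form $I_{0,K}\leq C^{K-4}(K-5)!$ for FJRW and $I^{GW}_{0,n,d}\leq d^{n-5}C^{n+d-4}$ for GW, propagates them to genus one via Getzler's relation and to all genera via $g$-reduction (obtaining $I_{g,K,L}\leq C^{2g-2+K+L}(2g-2+K+L)!$, resp.\ $I^{GW}_{g,n,0,d,L}\leq d^{\chi-2}C(g,n)^{\chi+(g+L+1)d-2}$), and then reads off a positive radius of convergence in $s$ (resp.\ in $qe^{s}$) directly, with the $t$-dependence polynomial by the selection rule. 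Your reduction of the $t$-dependence to a polynomial is correct and agrees with the paper, but the analytic core of the theorem requires these quantitative estimates, not the semisimple reconstruction formalism.
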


    The main theorem of the article is

    \begin{thm}
    Ruan's conjecture for all genera holds for the above three classes of elliptic orbifold $\mathbb{P}^1$'s and their FJRW counterparts.
    \end{thm}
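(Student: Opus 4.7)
The plan is to identify both $\mathcal{A}_{GW}$ and $\mathcal{A}_{FJRW}$ with a single Saito--Givental total ancestor potential $\mathcal{A}_{W}^{\mathrm{Saito}}$ attached to the corresponding simple elliptic singularity ($\widetilde{E}_6$ for $P_8$, $\widetilde{E}_7$ for $X_9^T$, $\widetilde{E}_8$ for $J_{10}^T$), the two identifications occurring at two different points of Saito's Frobenius manifold connected by analytic continuation. Theorem \ref{thm:convergent} allows us to work throughout with honest germs of analytic Frobenius manifolds rather than formal series. At generic points of the unfolding both Frobenius structures are semisimple, so the Givental--Teleman classification applies and reduces the all-genus statement to (i) matching the genus-zero Frobenius structures after the symplectic change of coordinates $\mathbb{U}_{\mathrm{LG/CY}}$, and (ii) matching the $R$-matrices that Givental's formula produces at the chosen semisimple point.

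The execution proceeds in three steps. \emph{Step 1 (GW $\leftrightarrow$ Saito).} Using the existing mirror theorem of Milanov--Ruan for elliptic orbifold $\mathbb{P}^1$, one identifies the germ of $\mathcal{A}_{GW}$ at the large-volume limit ($t=0$, $\mathrm{Re}(s)\ll 0$) with $\mathcal{A}_{W}^{\mathrm{Saito}}$ at the corresponding point of Saito's unfolding. \emph{Step 2 (FJRW $\leftrightarrow$ Saito).} This is the genuinely new content. I would show that $\mathcal{A}_{FJRW}$, expanded at $s=0$, equals $\mathcal{A}_{W}^{\mathrm{Saito}}$ evaluated at the ``Gepner-type'' orbifold point of Saito's Frobenius manifold. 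The route is to (a) reconstruct the genus-zero FJRW Frobenius manifold from its germ at the origin via WDVV, starting from a small set of primary FJRW correlators computed through the concavity/index-zero axioms and dimension counting; (b) match this germ with Saito's Frobenius manifold at the Gepner point---this is the natural ``B-model mirror'' of the LG model $(W,G_{\max})$, with the FJRW state space identifying with the invariant part of the Jacobian algebra; and (c) invoke Teleman's reconstruction theorem to lift the identification to all genera. \emph{Step 3 (Assembly).} The operator $\mathbb{U}_{\mathrm{LG/CY}}$ is identified as the composition of the analytic continuation on Saito's Frobenius manifold with the Givental symplectomorphisms produced by Steps 1 and 2; the relation $\mathcal{A}_{GW} = \widehat{\mathbb{U}}_{\mathrm{LG/CY}} \mathcal{A}_{FJRW}$ is then the quantization of Step 3, which is exactly the statement of the theorem.

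The hardest step is Step 2. The Calabi--Yau condition ($\sum q_i = 1$) forces the Euler field to be degenerate, and as a consequence Teleman's theorem does \emph{not} determine the $R$-matrix uniquely from the genus-zero Frobenius structure alone: one must pin it down with a finite amount of additional data. Concretely, the main technical burden will be (a) computing enough low-genus FJRW correlators---in particular at least one genus-one four-point function exhibiting the quasi-modular behaviour characteristic of elliptic orbifolds---to fix the $R$-matrix uniquely, and (b) verifying that the resulting $R$-matrix agrees with the one produced by the primitive form of Saito's theory at the Gepner point. Once this $R$-matrix matching is in place, the remaining ingredients (analytic continuation of the Kähler parameter, matching of Givental cones, and quantization) are dictated by the Givental--Teleman formalism, and the all-genus correspondence follows.
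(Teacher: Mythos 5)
Your overall architecture is the same as the paper's: both potentials are identified with the Saito--Givental theory of the associated simple elliptic singularity at two points (Gepner limit and large complex structure limit), and the quantized symplectic transformation $\widehat{\mathbb{U}}_{\rm LG/CY}$ is supplied by Milanov--Ruan's analytic continuation on the B-side. Where you diverge is the all-genus lifting: you invoke the Givental--Teleman classification at a semisimple point, whereas the paper reconstructs every correlator explicitly from tautological relations (WDVV for genus zero, Getzler's relation for genus one, $g$-reduction for higher genus), reducing everything to the $3$-point correlators and a handful of basic $4$-point correlators computed by concavity and orbifold GRR on the A-side and by period/flat-coordinate expansions on the B-side. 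The authors themselves note that at semisimple points their reconstruction also follows from \cite{T}, so your route is viable there; what the explicit reconstruction buys, and what your route gives up, is (i) validity at the non-semisimple points $\mathbf{t}=0$ where FJRW theory and the GW small-volume limit actually live, and (ii) effective factorial bounds on correlators, which are precisely the engine of the proof of Theorem \ref{thm:convergent}. Since you cite Theorem \ref{thm:convergent} as an input, your argument is not circular as a proof of the main theorem, but you should be aware that in the paper the convergence statement is not independent of the reconstruction machinery you propose to discard. One further remark: your worry that the degenerate Euler field prevents Teleman's theorem from fixing the $R$-matrix is misplaced for these examples --- the Frobenius manifolds are conformal and semisimple at generic $\mathbf{t}$, so the $R$-matrix is unique there; the genus-one computations in the paper serve the non-semisimple reconstruction and the convergence estimates, not $R$-matrix disambiguation.

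The one genuine gap is Step 1. There is no ``existing mirror theorem of Milanov--Ruan'' identifying the all-genus GW ancestor potential of the elliptic orbifold $\mathbb{P}^1$ with $\cal{A}^{SG}_{W_\infty}$; only the genus-zero statement was known (Satake--Takahashi \cite{SatT}), and the all-genus LG-to-CY matching is Theorem \ref{thm:LG-to-CY mirror} of this paper, which \cite{MR} in turn relies on. Your Step 1 therefore carries the same burden as your Step 2: one must match the genus-zero Frobenius structures at the large-volume limit (via the mirror map, the divisor equation, and the correlator $\langle\Delta_{x,1},\Delta_{y,1},\Delta_{z,1}\rangle_{0,3,1}=1$) and then lift to all genera by whichever reconstruction mechanism you adopt. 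A smaller inaccuracy: for $G=G_{\max}$ the FJRW ring is identified with the \emph{full} Milnor ring of the transposed polynomial (Krawitz's mirror map), not with an invariant part of the Jacobian algebra.
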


    Chiodo-Ruan \cite{CR3} has reframed Ruan's conjecture in the language of {\em global mirror symmetry}. We will take their approach. The global mirror symmetry of our
    examples involves B-model objects consisting of three families of elliptic singularities
\begin{eqnarray}
\begin{split}
&P_8(\sigma): x^3+y^3+z^3+\sigma xyz, &\sigma^3+27\neq0.\\
&X_9(\sigma): x^2z+xy^3+z^2+\sigma xyz, &\sigma^3+27\neq0. \\
&J_{10}(\sigma): x^3z+y^3+z^2+\sigma xyz, &\sigma^3+27\neq0.
\end{split}
\end{eqnarray}
We abbreviate $P_8=P_{8}(0),X_9=X_9(0),J_{10}=J_{10}(0)$. There is a B-model theory on the local miniversal deformation of singularities constructed
by Saito-Givental. In a companion article by Milanov-Ruan \cite{MR}, they have worked out a {\em global} Saito-Givental theory in the sense of allowing the parameter $\sigma$ to vary from $\sigma=0$ to $\sigma=\infty$. In particular, they show that Saito-Givental theory at $\sigma=0$ is related to Saito-Givental theory at $\sigma=\infty$ by an analytic continuation and quantization of symplectic transformations.
By \cite{CR3}, Ruan's conjecture will follow from the theorem of Milanov-Ruan and two mirror symmetry theorems. The main technical content of our papers is to prove the following {\em LG-to-LG mirror theorem} and {\em LG-to-CY mirror theorem}.

\begin{thm}\label{thm:LG-LG mirror}
When $W:=P_{8},X_{9},J_{10}$, we can choose the coordinates appropriately, such that
\begin{equation}
\cal{A}_{FJRW}^{W^T,G_{\textrm{max}}}=\cal{A}_{W}^{SG}.
\end{equation}
\end{thm}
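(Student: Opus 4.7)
The plan is to apply the Givental-Teleman reconstruction theorem for semisimple cohomological field theories. Both $\mathcal{A}_{FJRW}^{W^T,G_{\textrm{max}}}$ and $\mathcal{A}_{W}^{SG}$ are total ancestor potentials of CohFTs, so it suffices to verify (i) their underlying Frobenius manifolds are isomorphic under a suitable identification of state spaces and flat coordinates, and (ii) this common Frobenius manifold is generically semisimple. Teleman's classification of semisimple CohFTs then promotes the genus-zero match to the desired equality of total ancestor potentials.

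For step (i), I would first identify the FJRW state space $\mathcal{H}_{FJRW}^{W^T,G_{\textrm{max}}}$, together with its pairing and grading, with the vector space underlying Saito's construction on the miniversal deformation of $W$. An enhancement of Krawitz's LG-to-LG mirror isomorphism (refining the cohomological correspondence of Chiodo-Ruan) supplies this at the level of Frobenius algebras. Next I would compute enough genus-zero FJRW correlators to pin down the full Frobenius manifold structure. Since the state spaces are small (dimensions $8,9,10$ for $P_8, X_9^T, J_{10}^T$) and many sectors are narrow, a combination of explicit index computations and the algebraic virtual cycle of Polishchuk-Vaintrob should give access to the critical three- and four-point functions. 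A reconstruction argument using the WDVV equation, together with a divisor-type equation along the one-dimensional degree-$2$ subspace $H_2$, then propagates these basic correlators to the entire genus-zero CohFT. On the $B$-side, one selects a primitive form on the miniversal deformation of $W$ and expands Saito's flat structure in a neighborhood of $\sigma=0$; the analyticity statement Theorem \ref{thm:convergent}(2) ensures that the FJRW side is a genuine analytic germ, so that after the LG mirror map relating the two sets of flat coordinates the two Frobenius manifolds can be compared and shown to agree.

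For step (ii), generic semisimplicity on the $B$-side is classical: the Jacobi algebra of a generic fibre of a miniversal deformation of an isolated hypersurface singularity is semisimple, and Saito's construction transports this to semisimplicity of the Frobenius manifold on an open dense locus. Transporting this through the isomorphism established in (i) yields the same statement on the FJRW side. Teleman's reconstruction then recovers the full ancestor potential from the genus-zero data on the semisimple locus, and analyticity (again Theorem \ref{thm:convergent}) extends the equality to a neighborhood of the origin.

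The main obstacle lies in step (i), specifically the FJRW genus-zero computations. The key nontrivial four-point correlators --- in particular those that cannot be accessed by concavity or simple degree arguments --- require a careful analysis of cosection-localized or Polishchuk-Vaintrob virtual classes, and one must identify a small enough list of ``basic'' correlators from which WDVV plus the degree-$2$ divisor relation can bootstrap everything else. Equally delicate is the choice of primitive form on the $B$-side: for simple elliptic singularities the primitive form depends on transcendental periods of the elliptic curve $X_W$, and fixing it together with the resulting LG mirror map is precisely what realises the phrase ``we can choose the coordinates appropriately'' in the statement of Theorem \ref{thm:LG-LG mirror}.
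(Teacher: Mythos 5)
Your proposal is correct in outline but routes the higher-genus step differently from the paper. For genus zero you and the paper do essentially the same thing: identify the FJRW state space with the Milnor ring via Krawitz's isomorphism, isolate a short list of basic four-point correlators, compute them on the A-side by concavity/index arguments and on the B-side by expanding Saito's flat coordinates via period integrals (the paper uses the orbifold Grothendieck--Riemann--Roch formula and Noumi's method, respectively), and propagate by WDVV. For higher genus, however, you invoke the Givental--Teleman classification of semisimple CohFTs, whereas the paper reconstructs all genera explicitly by tautological relations: Getzler's relation in $H_*(\overline{\mathscr{M}}_{1,4})$ for genus one and the $g$-reduction lemma of Faber--Shadrin--Zvonkine/FJR for $g\geq 2$, applied simultaneously to both sides. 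The paper explicitly acknowledges that at semisimple points your route also works (``On semi-simple point, the reconstruction also follows by [T]''), so your argument does prove Theorem \ref{thm:LG-LG mirror} as stated at semisimple $\textbf{t}_A,\textbf{t}_B$. What the explicit reconstruction buys, and what Teleman does not, is (a) quantitative control: the same WDVV/Getzler/$g$-reduction recursions yield the factorial bounds on correlators that prove the convergence Theorem \ref{thm:convergent}, and (b) validity at $\textbf{t}=0$, which is \emph{not} semisimple; the authors need the FJRW theory at the origin to show that the Saito--Givental potential extends to the non-semisimple locus, which is an input to Milanov--Ruan's global analytic continuation.

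Two caveats on your write-up. First, there is no divisor equation in FJRW theory (the paper notes this explicitly when contrasting with the Gromov--Witten side), so your proposed ``divisor-type equation along $H_2$'' is not available on the LG side; fortunately WDVV alone suffices there, as Theorem \ref{thm:FJRW g=0 reconstruction} shows. Second, invoking Theorem \ref{thm:convergent}(2) to upgrade the genus-zero match to an analytic statement is circular within the paper's logic, since that convergence theorem is itself proved from the explicit reconstruction estimates; for the comparison at semisimple points this step is not actually needed.
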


\begin{thm}\label{thm:LG-to-CY mirror}
For each of the three types of elliptic orbifold $\mathbb{P}^1$, we can choose a coordinate system such that
\begin{equation}
\cal{A}_{GW}(X_{W^T}/\widetilde{G}_{\textrm{max}})=\cal{A}_{W_{\infty}}^{SG}.
\end{equation}
\end{thm}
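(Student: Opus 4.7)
My plan is to deduce Theorem \ref{thm:LG-to-CY mirror} from the Givental--Teleman reconstruction theorem for semisimple cohomological field theories. Both sides of the desired equality are total ancestor potentials of CohFTs on state spaces whose dimensions have already been matched by the cohomological LG/CY correspondence of Chiodo--Ruan. By Theorem \ref{thm:convergent}(1), together with the analogous analyticity statement for the Saito--Givental theory near $\sigma=\infty$ contained in the work of Milanov--Ruan, each side is a genuine analytic function on a neighborhood of the relevant limit, so we may legitimately compare them pointwise on a common open set.

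The first thing I would check is that both CohFTs are generically semisimple. On the A-side, the small quantum cohomology of each of $\mathbb{P}^1_{3,3,3}$, $\mathbb{P}^1_{4,4,2}$, $\mathbb{P}^1_{6,3,2}$ is semisimple away from the discriminant of quantum multiplication, which can be read off from the known presentation of the quantum ring of each of these weighted projective lines. On the B-side, the Saito Frobenius manifold of $W_\infty$ is semisimple away from the discriminant of the miniversal deformation, since a generic deformation splits the limit singularity into nondegenerate Morse critical points. Once generic semisimplicity is in place, Teleman's theorem says that the full ancestor potential on either side is determined by (i) the underlying Frobenius manifold germ and (ii) its calibration, equivalently the $R$-matrix at a semisimple base point. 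For (i), I would prove a genus-zero mirror theorem producing a mirror map that identifies the K\"ahler parameter $s$ (with $\mathrm{Re}(s)\ll 0$) on the A-side with a local coordinate on the B-model moduli at $\sigma=\infty$ and matches the two Frobenius structures. Concretely, this reduces to a computation of the two $J$-functions: the A-side $J$-function via Givental's formalism for orbifold Gromov--Witten theory, and the B-side $J$-function via oscillatory integrals of the primitive form of $W_\infty$ as developed by Milanov--Ruan. The match should reduce to known modular identities for the elliptic curves associated with the three orbifolds.

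The main obstacle is (ii), matching the $R$-matrices. Here I would invoke Givental's uniqueness principle: on a conformal semisimple Frobenius manifold, the $R$-matrix is determined up to a discrete choice by its asymptotic behavior together with the Euler field. Both Frobenius manifolds carry a natural Euler field---one from the Chen--Ruan grading on the A-side and one from the weighted-homogeneous grading of $W_\infty$ on the B-side---and once the mirror map is shown to intertwine them, the two $R$-matrices must coincide. The elliptic case is subtle because the Euler field has a vanishing eigenvalue in the K\"ahler direction, so the usual Fano-type rigidification of the $R$-matrix breaks down; I would handle this by first matching the $R$-matrices at an auxiliary semisimple point of the Frobenius manifold where the Euler eigenvalues are nondegenerate, and then propagating the equality back to the elliptic locus by analytic continuation, using Theorem \ref{thm:convergent} to justify that the analytic comparison is valid. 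Once the $R$-matrices agree, Teleman's reconstruction directly yields the equality of total ancestor potentials asserted in Theorem \ref{thm:LG-to-CY mirror}.
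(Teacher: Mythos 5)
Your strategy---generic semisimplicity plus Givental--Teleman reconstruction---is a legitimate alternative to what the paper does, and the paper itself concedes the point in Section 3.2 (``On semi-simple point, the reconstruction also follows by \cite{T}''). But the paper deliberately does \emph{not} take this route: it reconstructs the entire theory on both sides from tautological relations (WDVV, Getzler's relation, $g$-reduction), reducing everything to the pairing, the ring structural constants, and the single correlator $\LD\Delta_{x,1},\Delta_{y,1},\Delta_{z,1}\RD_{0,3,1}=1$ on the A-side, matched against three explicit $3$-point functions of $W_\infty$ computed from the Picard--Fuchs equation and the mirror map of Milanov--Ruan. The payoff of their heavier route is that the same reconstruction, run with explicit bounds, \emph{proves} the convergence statements and the extendability of the Saito--Givental function to the non-semisimple locus, neither of which Teleman's theorem can deliver.

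Two concrete gaps in your write-up. First, the step ``prove a genus-zero mirror theorem by computing the two $J$-functions and matching via modular identities'' is where essentially all of the mathematical content of the theorem lives, and you leave it as a black box; in the paper this is precisely the combination of the genus-0 reconstruction (Theorem \ref{thm:GW recons}), the identical reconstruction on the Saito--Givental side, and the explicit matching of the leading Fourier coefficients of $\LD\Delta_{x,1},\Delta_{y,1},\Delta_{z,1}\RD_{0,3}$ and $\LD\Delta_{x,1},\Delta_{x,1},\Delta_{x,1}\RD_{0,3}$ (and their analogues) against $\pi_A(\sigma)$. Second, your opening claim that the analyticity of $\cal{A}^{SG}_{W_\infty}$ near $\sigma=\infty$ is ``contained in the work of Milanov--Ruan'' is circular: Remark 2.1 of the paper states explicitly that the extension of the Givental function to the non-semisimple locus is \emph{derived from} the convergence theorems and the two mirror theorems, not assumed beforehand. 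This does not sink your argument at generic semisimple points (which is how Theorem 4.11 in the body is actually phrased, with $\textbf{t}_A,\textbf{t}_B$ semisimple), but it means you cannot invoke B-side analyticity at the large-volume limit as an input. Finally, your separate ``$R$-matrix matching'' step is redundant: for a conformal semisimple Frobenius manifold Teleman's uniqueness already ties the $R$-matrix to the Frobenius structure and the Euler field, so once the mirror map intertwines the Euler fields there is nothing further to match; the auxiliary-point-and-propagate device adds no content.
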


\subsection{Relation to the work of Milanov-Ruan}

  There is an companion article of Milanov-Ruan \cite{MR}, and the two papers can be considered as a single entity. Basically, we are working on the A-model side of global mirror symmetry while they work on the B-side. The final conclusion on analytic continuation and quantization of symplectic transformation is drawn from their work on the B-side. On the other hand, they also draw our work at several critical places.
For example, the Saito-Givental generating function is only defined at the semi-simple loci. The analytic continuation is carried out precisely at non-semisimple loci. Milanov-Ruan's theorem is based on the assumption that the Saito-Givental generating function can be extended to non-semisimple loci. As far as we know, this is a very difficult problem. In this case, our convergence theorem and two mirror theorems provide the main ingredient of the proof of the extendability of the Saito-Givental generating function (see Lemma 3.2 in \cite{MR}). Another beautiful
theorem of Milanov-Ruan is the quasi-modularity of the Saito-Givental generating function. Using our LG-to-CY mirror symmetry theorem, they conclude of the quasi-modularity of Gromov-Witten theory of the corresponding elliptic orbifold $\mathbb{P}^1$.

The paper is organized as follows. In section two, we will review the B-model results of Milanov-Ruan. In section three, we will prove the convergence of FJRW-theory and the LG-to-LG mirror theorem. The convergence of Gromov-Witten theory and the LG-to-CY mirror theorem will be established in section four.

\subsection{Acknowledgements}

   The genus-0 Gromov-Witten theory of the above elliptic orbifold $\mathbb{P}^1$ has been studied extensively by Satake and Takahashi \cite{SatT}. Among other things, they established the genus-0 LG-to-CY mirror symmetry for the above examples. The main focus of this article is the higher genus cases, which has not yet been studied in the literature except for genus one in \cite{SatT} and \cite{Str}. Special thanks goes to Milanov-Ruan from which the current work draws a great deal of inspiration. Both of us would like to thank our advisor Prof. Yongbin Ruan for introducing them to the subject and many discussions that made this work possible. We would like to thank Alessandro Chiodo, Huijun Fan, Tyler Jarvis, Todor Milanov for many helpful discussions. We thank Arthur Greenspoon for the editorial assistance.

\section{A review of B-model theory}

  To draw the conclusions to the LG/CY correspondence, we use the results of Milanov-Ruan \cite{MR} on the B-model side. In this section, we review their work briefly. Readers are referred to their paper for the details.

The setting of B-model theory is the Saito-Givental theory for the miniversal deformation of a singularity.
 If $\{\phi_i\}$ forms a homogeneous vector space basis of the Milnor ring $\mathscr{Q}_W=\mathbb{C}[x_1,\cdots,x_N]/\textrm{Jac}(W)$, where $\textrm{Jac}(W)$ is the Jacobi ideal of $W$, then the miniversal deformation space of an isolated hypersurface singularity $W$ is
\begin{equation*}
W(\textbf{s},\textbf{x})=W+\sum_{i=-1}^{\mu-2}s_i \phi_i.
\end{equation*}
$\mu$ is the dimension of the Milnor ring as a vector space.
We assume $W(\textbf{s},\textbf{x})$ is homogeneous and then denote $d_i=\deg\,s_i=1-\deg\phi_i.$

According to \cite{S}, there exists a local Frobenius manifold structure on the base space of a universal unfolding of an isolated hypersurface singularity.
For a generic point in the miniversal deformation space, the Frobenius manifold is semisimple. According to Givental \cite{Gi}, higher genus Gromov-Witten type invariants and ancestor functions can be defined as:
\begin{equation*}
\cal{A}_{W}^{SG}=\exp(\sum_{g\geq0}\hbar^{2g-2}\cal{F}_{g,W}^{SG}).
\end{equation*}
\subsection{Global Saito-Givental theories for elliptic singularities}
The B-models of our elliptic orbifold $\mathbb{P}^1$ are three family of elliptic singularities.
\begin{eqnarray}
\begin{split}
&P_8(\sigma): x^3+y^3+z^3+\sigma xyz.\\
&X_9(\sigma): x^2z+xy^3+z^2+\sigma xyz. \\
&J_{10}(\sigma): x^3z+y^3+z^2+\sigma xyz.
\end{split}
\end{eqnarray}
where the parameter
 space of $\sigma$ is
 $$\Sigma=\{\sigma\in\CC\,|\,\sigma^3+27\neq 0\}.$$
  To solve the LG/CY correspondence, we need to vary the Saito-Givental theory globally from $\sigma=0$ to $\sigma=\infty$.
  The main content  of \cite{MR} is a  construction of such a global Saito-Givental theory.

Here we use the notation as in \cite{MR}, and consider the miniversal deformation of $W_{\sigma}$, where $W_{\sigma}=P_8(\sigma), X_9(\sigma), J_{10}(\sigma)$. For $\textbf{s}=(s_{-1},s_0,\cdots,s_{\mu-2})\in\mathcal{S}= \Sigma\times\mathbb{C}^{\mu-1}$, we define $W_{\sigma}(\textbf{s},\textbf{x}):\mathcal{S}\times\mathbb{C}^3\longrightarrow\mathbb{C}$ by
\begin{equation}
W_{\sigma}(\textbf{s},\textbf{x})=W_{\sigma}+\sum_{i=-1}^{\mu-2}s_i\phi_i
\end{equation}
where $\{\phi_i\}$ forms a homogeneous vector basis of $\mathscr{Q}_{W_{\sigma}}$, the Milnor ring of $W_{\sigma}$.
%defined by $\mathscr{Q}_{W_{\sigma}}:=\mathbb{C}[x,y,z]/\textrm{Jac}(W_{\sigma})$, $\textrm{Jac}(W_{\sigma})$ is the Jacobi ideal of $W_{\sigma}$.
Then we have the following maps:
\begin{equation*}
\begin{CD}
\mathcal{S}\times \mathbb{C}^3 & & \\
@V{\varphi}VV\searrow &  \\
\mathcal{S}\times \mathbb{C} &@>>{p}> & \mathcal{S}
\end{CD}
\qquad
\begin{tabular}{rl}
&$\varphi(\textbf{s},\textbf{x})= (\textbf{s},W_{\sigma}(\textbf{s},\textbf{x}))$,\\
\\
&$p(\textbf{s},\lambda)=\textbf{s}.$
\end{tabular}
\end{equation*}
For a generic point $(\textbf{s},\lambda)\in\cal{K}\subset\cal{S}\times\mathbb{C}$, the fiber is homotopy equivalent to a bouquet of $\mu$ middle-dimensional spheres.

Consider the bundle $p:\cal{HF}\longrightarrow\cal{K}$ with fiber $X_{\textbf{s},\lambda}$. In \cite{MR}, fixing $(\textbf{s},\lambda)=(s_{-1},0,\cdots,0,\lambda)$, they choose a \emph{toroidal} cycle $a$ which is in the kernel of the intersection pairing on $H^2(X_{s,\lambda},\mathbb{Q})$ such that:
\begin{equation*}
\pi_A(s_{-1})=\pi_a(s):=\int_{a}\frac{d^3x}{dW}
\end{equation*}
According to Saito \cite{S}, the existence of a Frobenius structure on $\cal{S}$ depends on a choice of primitive form
 $\omega=g(\sigma,x)d^3x$, where $g(\sigma, x)$ is homogeneous of degree zero.
 For simple elliptic singularities, it is known that
 $g(\sigma,x)=1/\pi_A(s_{-1})$,
  where
 $\pi_A(s_{-1})$ is the period of a middle-dimensional cycle $A$. Milanov-Ruan's global
 theory depends on a flat family of $A$ and hence $\pi_A(s_{-1})$. It can be constructed
 by choosing $A$ at a base point and extending to a multiple-valued flat family.

 The natural parameter space of global Saito-Givental theory is the universal cover of $\Sigma$, denoted by $\widetilde{\Sigma}$.
 Recall that by fixing a symplectic basis $A',B'$, we have a canonical local isomorphism:
\begin{equation*}
\tau':=\frac{\pi_{B'}(\sigma)}{\pi_{A'}(\sigma)}:\Sigma\longrightarrow\mathbb{H}
\end{equation*}
where $\mathbb{H}$ is the upper half-plane. By analytic continuation of $A', B'$, it induces an isomorphism from $\widetilde{\Sigma}$ to $\mathbb{H}$.
Milanov-Ruan choose $g(\sigma, x)=1/\pi_{A'}(s_{-1})$. They shows that the choice of the $B'$-cycle has theadditional good property that the flat coordinate along $\sigma$-direction also lies in $\mathbb{H}$. Therefore, the parameter space of global Saito-Givental
theory can be chosen naturally as $\tau\in\mathbb{H}$. Later, the choice of $A', B'$ are fixed near $\sigma=\infty$ in order to
match with Gromov-Witten theory. From now on, we will use $\cal{F}_{g,W}(t,\tau), \cal{A}_{W}^{SG}(\tau)$ to denote the dependence on an element
$\tau\in \mathbb{H}$. By definition $\cal{A}_{W}^{SG}(\tau')$ is the analytic continuation of $\tau$.

\begin{rem}
Givental's higher genus function is only defined at semi-simple points $\textbf{s}\neq 0$. However, we need to expand $\cal{A}_{W}^{SG}(\tau)$ at
non-semisimple points. The extension of Givental function to the non-semisimple locus has been a difficult problem in the subject. The keys
to the proof are convergence of FJRW-theory and Gromov-Witten theory and our two mirror theorems (Theorem 1.4 and Theorem 1.5).
\end{rem}

\subsection{Flat coordinates}
The middle cohomology groups $H^2(X_{s,\lambda})$
form a vector bundle equipped with a flat Gauss-Manin connection.
For any relative infinite cycle $\cal{A}\in\,H_3(\mathbb{C}^3, W^{\infty},\mathbb{C})$, we can define the oscillatory integral
\begin{equation*}
J_{\cal{A}}(\textbf{s},z):=(-2\pi z)^{-3/2}zd_{\mathcal{S}}\int_{\cal{A}}e^{W(\textbf{s},\textbf{x})/z}\omega.
\end{equation*}
where $d_{\cal{S}}$ is the de Rham differential on $\cal{S}$.
This gives a family of sections of the cotangent sheaf $T^*_{\cal{S}}$ parameterized by $z$. The oscillatory integral satisfies the following differential equations:
\begin{eqnarray}\label{eq:quantum PDE}
\begin{split}
&z\partial_{i}J_{\cal{A}}(\textbf{t},z)=\partial_{i}\bullet_{\textbf{t}}J_{\cal{A}}(\textbf{t},z),\\
&(z\partial_{z}+E)J_{\cal{A}}(\textbf{t},z)=\theta\ J_{\cal{A}}(\textbf{t},z).
\end{split}
\end{eqnarray}
where $\textbf{t}:=(t_{-1},t_{0},\cdots,t_{\mu-2})$ are flat coordinates, and $\partial_{i}$ is the partial derivative along the $t_i$ direction. $E$ is the \emph{Euler vector field}, and $\theta$ is the \emph{Hodge grading operator} $\theta:T^*_{\cal{S}}\longrightarrow\,T^*_{\cal{S}}$ which sends $dt_i$ to $(\frac{1}{2}-d_i)dt_i$.

The flat coordinates are computed by analyzing the Gauss-Manin connection at $z=\infty$. Rescaling the oscillatory integral, it can be expanded in powers of $z$ as follows:
\begin{equation}
(-2\pi z)^{-3/2}\int_{\mathcal{A}}e^{W(\textbf{s},\textbf{x})/z}\omega
=z^{-1/2}\sum_{\delta}z^{-\delta}\int_{\alpha(\sigma,1)}C_{\delta}(\textbf{s},\textbf{x})\frac{\omega}{df}
\end{equation}
for $(\sigma,1)=(\sigma,0,\cdots,0,1)\in\cal{S}\times\mathbb{C}$, and $\alpha(\sigma,1)$ is a middle-dimensional cycle in $X_{\sigma,1}$. Also here we have $\delta=\sum_{i=0}^{\mu-2}k_{i}d_{i}$ for $k_{i}\geq0,k_{i}\in\mathbb{Z}.$ We have the expansion
\begin{equation}\label{eq:standard-coordinate}
C_{\delta}(\textbf{s},\textbf{x})=\sum\widetilde{\Gamma}\Big(\sum_{i=0}^{\mu-2}k_{i}q_{i}\Big)\prod_{i=0}^{\mu-2}\frac{\Big(s_{i}\phi_{i}\Big)^{k_i}}{k_{i}!}.
\end{equation}
where $\widetilde{\Gamma}(k)=(2\pi)^{-3/2}e^{\pi\,i(k-1/2)}\int_{0}^{\infty}e^{-u}u^kdu.$
On the other hand, given a middle-dimensional homology cycle $\alpha_{i}(\sigma,1)\in\,H_2(X_{\sigma,1})$, we can consider the period matrix
\begin{equation}\label{eq:period matrix}
\Pi_{\delta,i}=\int_{\alpha_{i}(\sigma,1)}C_{\delta}(\textbf{s},\textbf{x})\frac{\omega}{df},i=-1,0,\cdots,\mu-2
\end{equation}
According to Milanov-Ruan, by analyzing the quantum differential equations (\ref{eq:quantum PDE}) along $z$ near $\infty$, there exist cycles $\alpha_i(\sigma,1)$ such that the entries $\Pi_{\delta,i}$ are as follows:
\begin{equation}\label{eq:flat coord}
\Pi_{\delta,i}
=\left\{
\begin{array}{ll}
1, &\,\text{if}\,\delta=i=0\\
\frac{1}{2}\sum_{j=-1}^{\mu-2}t_jt_j', &\, \text{if}\,\delta=1,i=-1,\\
t_i,&\,\text{if}\,\delta=d_i,\\
0,& \text{otherwise}.
\end{array}
\right.\end{equation}
In particular, $\Pi_{0,0}=1$ and it implies $$\pi_{A}(s_{-1})=\int_{\alpha_{0}(\sigma,1)}\widetilde{\Gamma}(0)\frac{d^3x}{df}.$$
The degree zero flat coordinate has the expression:
\begin{equation*}
t_{-1}=\frac{\pi_{B}(s_{-1})}{\pi_{A}(s_{-1})}.
\end{equation*}
The oscillatory integrals satisfy the Gauss-Manin connection properties:
\begin{eqnarray}\label{eq:Guass-Manin}
\begin{split}
&\nabla_{\partial/\partial s_i}\int\phi(\textbf{s},\textbf{x})W_{x_i}\frac{d^3x}{dW}
=-\int\phi(\textbf{s},\textbf{x})\frac{\partial\,W}{\partial\,s_i}\frac{d^3x}{dW}\\
&\nabla_{\partial/\partial\lambda}\int\phi(\textbf{s},\textbf{x})W_{x_i}\frac{d^3x}{dW}
=\int\partial_{x_i}\phi(\textbf{s},\textbf{x})\frac{d^3x}{dW}
\end{split}
\end{eqnarray}
Using the Gauss-Manin connection, we can expand the period matrix as a power series in the canonical coordinates $s_i$'s. Comparing with the period matrix (\ref{eq:period matrix}) will give the transition formula between the canonical coordinates and the flat coordinates.

\subsection{Gepner limit versus large complex structure limit}
The Gromov-Witten theory is expected to match with the expansion of the Saito-Givental function at $\sigma=\infty$ with a choice of
symplectic basis $A_{\infty}, B_{\infty}$ such that local monodromy is maximally unipotent. This is referred as a {\em large complex structure limit}.
We shall fix this particular choice of $A_{\infty}, B_{\infty}$
and analytic continue it to a flat family. After identifying with the $\tau$ coordinate, $\sigma=\infty$ correspond to $\tau=i\infty$.
We denote the Saito-Givental theory at the large complex structure limit by $\cal{A}^{SG}_{W_{\infty}}$.
The matching between $\cal{A}^{SG}_{W_{\infty}}$ and Gromov-Witten theory is called LG-to-CY mirror symmetry.

We can analytic continue $\cal{A}^{SG}_{W_{\infty}}$ from $\tau$ near $i\infty$ to the region near $\sigma=0$ (still denoted by $\cal{A}^{SG}_{W_{\infty}}$). Again, we obtain a
Saito-Givental theory for a symplectic integral basis denoted also by $A_{\infty}, B_{\infty}$ near $\sigma=0$. However, FJRW-theory does NOT corresponds to any integrable basis. To match with
FJRW-theory (LG-to-LG mirror symmetry),
We can choose a complex basis
\begin{eqnarray}
\begin{split}
&\pi_{A_0}(s_{-1})=\ _2F_1(\frac{1}{3},\frac{1}{3};\frac{2}{3};-\frac{s_{-1}^3}{27});\\
&\pi_{B_0}(s_{-1})=\ _2F_1(\frac{2}{3},\frac{2}{3};\frac{4}{3};-\frac{s_{-1}^3}{27})s_{-1}.\\
\end{split}
\end{eqnarray}
\begin{rem}
For the Fermat type singularity $P_8$, this choice coincides with the one computed in \cite{NY}.
\end{rem}
This basis has the property of diagonalizing a certain monodromy. It was referred as the {\em Gepner limit}.
We denote the Saito-Givental theory of the Gepner limit by $\cal{A}^{SG}_{W}$. Let ${\mathbb U}$ be the
change of basis matrix between $A_0, B_0$ and $A_{\infty}, B_{\infty}$. There is a recipe to construct
a differential operator $\widehat{{\mathbb U}}$ from ${\mathbb U}$ called the {\em quantization of ${\mathbb U}$}. The central theorem (Theorem 4.4 in \cite{MR}) we used is
\begin{thm}
Up to an overall constant,
$$\widehat{{\mathbb U}}(\cal{A}^{SG}_{W})=\cal{A}^{SG}_{W_{\infty}}.$$
\end{thm}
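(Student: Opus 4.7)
The plan is to prove this by combining Givental's quantization formalism with Teleman's uniqueness theorem for semisimple cohomological field theories. Both $\cal{A}^{SG}_{W}$ and $\cal{A}^{SG}_{W_{\infty}}$ arise from the same miniversal unfolding $W_{\sigma}(\textbf{s},\textbf{x})$, but with two different primitive forms $\omega_0 = d^3x/\pi_{A_0}(s_{-1})$ and $\omega_{\infty} = d^3x/\pi_{A_{\infty}}(s_{-1})$ corresponding to the Gepner limit and the large complex structure limit respectively. The symplectic transformation $\mathbb{U}$ relating the period bases $(A_0,B_0)$ and $(A_{\infty},B_{\infty})$ under analytic continuation in $\tau\in\mathbb{H}$ induces a symplectic automorphism of the Givental loop space by acting on the solution matrix of the quantum differential equation (\ref{eq:quantum PDE}); my first step would be to verify that this action is well-defined and path-independent.

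At a generic (hence semisimple) point $\textbf{s}\in\mathcal{S}$, Givental's formula gives
\begin{equation*}
\cal{A}^{SG}_{*} = \widehat{\Psi}_{*}\,\widehat{R}_{*}(z)\prod_{i=1}^{\mu}\cal{A}_{\textrm{pt}}(t^{i}),\quad *\in\{0,\infty\},
\end{equation*}
where $\Psi_{*}$ is the transition to canonical coordinates and $R_{*}(z)$ is the asymptotic $R$-matrix assembled from the oscillatory integrals $J_{\mathcal{A}}(\textbf{s},z)$. Since the two Frobenius structures differ only by an overall rescaling of the residue pairing by $\pi_{A_{\infty}}/\pi_{A_0}$, their semisimple loci coincide and the canonical coordinates agree, so $(\Psi_{\infty},R_{\infty})$ is obtained from $(\Psi_0,R_0)$ precisely by the $\mathbb{U}$-action on fundamental solutions. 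By Teleman's reconstruction theorem, the ancestor potential of a semisimple CohFT is uniquely determined by $(\Psi_{*},R_{*})$, so it suffices to check that $\widehat{\mathbb{U}}$, realized as the standard metaplectic Gaussian kernel determined by the block decomposition of $\mathbb{U}$ on the positive and negative $z$-modes, implements this transformation at the level of potentials, the Witten-Kontsevich normalization producing precisely the ``overall constant''.

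The genus-zero input driving this is that $\mathbb{U}$ acts linearly on the periods (\ref{eq:period matrix}), so $R_{\infty}(z)=\mathbb{U}(z)\cdot R_0(z)$ as formal series; together with the Givental cocycle identity $\widehat{AB}=\widehat{A}\,\widehat{B}\cdot e^{c(A,B)}$ (whose scalar $e^{c(A,B)}$ supplies the overall constant), this identifies $\widehat{\mathbb{U}}\,\cal{A}^{SG}_{W}$ with $\cal{A}^{SG}_{W_{\infty}}$ on the semisimple open stratum.

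The main obstacle is extending this equality across the non-semisimple locus. The two expansion points $\tau\to i\infty$ and the Gepner point themselves sit on the discriminant, so Givental's formula and Teleman's theorem do not apply there directly; one has to first prove the identity on the semisimple open subset of $\widetilde{\Sigma}\times\mathbb{C}^{\mu-1}$ and then propagate it to the boundary by holomorphic continuation, which requires that each side be already holomorphic in a full neighborhood of the expansion point. This extendability, flagged in Remark~2.2, is precisely the hard point; in this paper it is supplied externally by the convergence result of Theorem~\ref{thm:convergent} combined with the mirror theorems \ref{thm:LG-LG mirror} and \ref{thm:LG-to-CY mirror}, which identify both sides with manifestly convergent A-model generating functions.
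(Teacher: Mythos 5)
The paper does not actually prove this statement: it is quoted verbatim as Theorem 4.4 of the companion paper of Milanov--Ruan \cite{MR}, and Section 2 is explicitly only a review of their results (``Readers are referred to their paper for the details''). So there is no in-paper proof to compare yours against; the theorem is an external input, and the present paper's contribution to it is limited to supplying the extendability of the Saito--Givental function to non-semisimple points via the convergence theorem and the two mirror theorems, exactly as your last paragraph says.

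Judged on its own merits, your sketch reproduces the architecture of the Milanov--Ruan argument (Givental's semisimple formula, analytic continuation over $\widetilde{\Sigma}\times\mathbb{C}^{\mu-1}$, quantized symplectic transformation, cocycle scalar as the overall constant) but defers all of its substance. Two specific gaps: first, invoking Teleman on the B-side is misplaced, since the higher-genus Saito--Givental potential is \emph{defined} by Givental's formula at semisimple points, so uniqueness of a CohFT with given $(\Psi,R)$ buys nothing; the entire content of the theorem is the claim that analytic continuation of the fundamental solution along the chosen path in $\widetilde{\Sigma}$, combined with the change of primitive form, is implemented on the loop space by the constant symplectic matrix $\mathbb{U}$ -- i.e.\ your asserted identity $R_{\infty}(z)=\mathbb{U}\cdot R_{0}(z)$ -- and this is asserted rather than proved. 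Second, the claim that the two Frobenius structures ``differ only by an overall rescaling of the residue pairing by $\pi_{A_{\infty}}/\pi_{A_{0}}$'' is not correct: that ratio is a nonconstant multivalued function of $s_{-1}$, so changing the primitive form changes the flat coordinates and the grading data, not just the metric by a scalar; only the canonical coordinates (critical values) are untouched. Both points are precisely the technical heart of \cite{MR}, so your proposal should be read as a correct road map to their proof rather than a proof.
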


\section{LG-to-LG mirror theorem of all genera}

In this section, we establish the LG-side of global mirror symmetry. They are (1) convergence of FJRW-theory; (2) LG-to-LG mirror
theorem of all genera.

\subsection{Fan-Jarvis-Ruan-Witten theory}

We first review FJRW-theory with a focus on elliptic singularities.  For more details about FJRW-theory, see \cite{FJR2}.

\subsubsection{The singularity $(W,G)$ and its FJRW state space}
\begin{df}
$W$ is a \emph{quasi-homogeneous non-degenerate polynomial} if $W$ satisfies the following properties:
\begin{enumerate}
\item (Quasihomogeneity) There exist weights $q_i\in\mathbb{Q}$, such that for all $\lambda\in\mathbb{C}^*$,
\begin{equation*}
W(\lambda^{q_1}x_1,\cdots,\lambda^{q_N}x_N)=\lambda W(x_1,\cdots,x_N).
\end{equation*}
\item The choice of the weights $q_i$ is unique.
\item $W$ has an isolated singularity only at 0.
\end{enumerate}
\end{df}
For each non-degenerate $W$, we have a symmetry group which we call the \emph{maximal diagonal symmetry group} of $W$, and denote by $G_{\textrm{max}}$:
\begin{equation*}
G_{\textrm{max}}:=\Big\{\gamma=(\lambda_1,\cdots,\lambda_N)\in(\mathbb{C}^*)^N\Big|W(\lambda_1x_1,\cdots,\lambda_N x_N)=W(x_1,\cdots,x_N)\Big\}.
\end{equation*}
$G_{\textrm{max}}$ always contains the \emph{exponential grading element} $J:=(e^{2\pi iq_1},\cdots,e^{2\pi iq_N})$.
In addition to $W$, FJRW theory depends on a choice of a suitable group $G$ with $\LD J\RD \subset G \subset G_{\textrm{max}}$.

The \emph{central charge} of $W$ is defined to be
\begin{equation*}
\hat{c}_{W}:=\sum_{i=1}^{N}(1-2q_i).
\end{equation*}
For $\hat{c}_{W}<1$, $W$ is called a \emph{simple singularity} and has been completely classified as the famous ADE-singularities. For $\hat{c}_{W}=1$, $W$ is called an \emph{elliptic singularity}.
In this paper, we study the FJRW theory of three special elliptic singularities with symmetry group $G=G_{\textrm{max}}$.
In the following, our $G$ equals $G_{\textrm{max}}$ unless otherwise stated.

For any $\gamma\in\,G$, we denote by $\mathbb{C}_{\gamma}^{N_{\gamma}}$ the fixed points of $\gamma$, where $N_{\gamma}$ is the complex dimension of the fixed locus. We denote by $W_{\gamma}$ the restriction of $W$ to the fixed locus. According to \cite{FJR2}, $W_{\gamma}$ is also non-degenerate.
\begin{df}
We define the \emph{$\gamma$-twisted sector} $H_{\gamma}$ to be the $G$-invariant part of the relative middle-dimensional cohomology for $W_{\gamma}$,
\begin{equation*}
H_{\gamma}:=H^*(\mathbb{C}^{N_{\gamma}}_{\gamma};W^{\infty}_{\gamma};\mathbb{C})^G
\end{equation*}
Here $W^{\infty}_{\gamma}=(Re\,W_{\gamma})^{-1}(M,\infty), M\gg0.$

The \emph{FJRW state space} $H^*_{FJRW}(W,G)$ is defined to be the direct sum of all \emph{$\gamma$-twisted sectors $H_{\gamma}$} for the pair $(W,G)$:
\begin{equation}
H^*_{FJRW}(W,G):=\bigoplus_{\gamma\in G}H_{\gamma}=\bigoplus_{\gamma\in G} H^*(\mathbb{C}^{N_{\gamma}}_{\gamma};W^{\infty}_{\gamma};\mathbb{C})^G
\end{equation}
\end{df}
For any $\gamma\in G$, we have $\gamma=(\exp(2\pi i\Theta_1^{\gamma}),\cdots,\exp(2\pi i\Theta_N^{\gamma}))\in(\mathbb{C}^*)^N$ for some unique $\Theta_i^{\gamma}\in[0,1)\cap\mathbb{Q}$.
We define the \emph{degree shifting number} $\iota_{\gamma}$ by:
\begin{equation*}
\iota_{\gamma}:=\sum_{i=1}^N(\Theta_i^{\gamma}-q_i).
\end{equation*}
For each homogeneous element $\alpha\in H_{\gamma},$ the \emph{complex degree} $\deg_W\alpha$ is defined as:
\begin{equation*}
\deg_W\alpha:=\deg\alpha+\iota_{\gamma}
\end{equation*}
where $\deg\alpha$ is the ordinary complex degree as an relative cohomology class. As an consequence, $H^*_{FJRW}(W,G)$ is a graded vector space under this grading.
\begin{df}
We say the $\gamma$-twisted sector $\alpha$ is \emph{narrow} if $N_{\gamma}=0$. Otherwise, we say the $\gamma$-sector is \emph{broad}.
\end{df}
For each $\gamma\in\,G$, there is a natural intersection pairing $\LD,\RD_{\gamma}$:
\begin{equation*}
\LD,\RD_{\gamma}:H_{\gamma}\times\,H_{\gamma^{-1}}\longrightarrow\mathbb{C}
\end{equation*}
Note that $H_{\gamma^{-1}}=H_{\gamma}$. For any $W$, the pairing on a narrow sector is obvious since it is one-dimensional. For a broad sector $\gamma$,
there is an isomorphism between $\mathscr{Q}_{W_{\gamma}}$ and $H^{N_{\gamma}}(\mathbb{C}^{N_{\gamma}}_{\gamma},W^{\infty}_{\gamma},\mathbb{C})$. The pairing for the broad sector is isomorphic to the residue pairing on $\mathscr{Q}_{W_{\gamma}}$.
The residue pairing is:
\begin{equation}\label{eq:Res}
\LD f,g\RD:=\textrm{Res}_{\textbf{x}=0}\frac{fg dx_1\cdots dx_N}{\frac{\partial W}{\partial x_{1}}\cdots
\frac{\partial W}{\partial {x_N}}}=C\mu.
\end{equation}
$C$ is the unique constant such that $fg=C\cdot\textrm{Hessian}(W)\mod \mathrm{Jac}(W)$.
The pairing of the FJRW space is defined to be
\begin{equation*}
\LD,\RD:=\sum_{\gamma\in\,G}\LD,\RD_{\gamma}
\end{equation*}

\subsubsection{W-Structure, virtual cycle and Cohomological Field Theory}
%{\bf The following formula are very sudden. Readers would have no idea on what are you talking about unless they are familiar with FJRW-theory. You can not assume that your readers or referee are familiar with FJRW-theory.}
We start with a genus $g$ possibly nodal orbi-curve $C$ with $k$ marked points $p_1,\cdots,p_k$, and its \emph{log-canonical bundle}, defined by
\begin{equation*}
\omega_{C,\log}:=\omega_{C}\otimes\mathscr{O}(p_1)\otimes\cdots\otimes\mathscr{O}(p_k).
\end{equation*}
where $\omega_{C}$ is the canonical bundle of $C$, $\mathscr{O}(p_i)$ is the holomorphic line bundle of degree one whose sections may have
a simple pole at $p$.
We write the polynomial $W=\sum_{j=1}^{s}W_j$ as a sum of monomials for $W_{j}=c_j\prod_{i=1}^{N}x_{i}^{b_{j,i}}$. A $W$-structure on the curve $C$ is a choice of $N$ orbifold line bundles $\cal{L}_1,\cdots,\cal{L}_N$ and $s$ isomorphisms of line bundles
\begin{equation*}
\varphi_j:W_j(\cal{L}_1,\cdots,\cal{L}_N)=\cal{L}_1^{\otimes\,b_{j,1}}\otimes\cdots\otimes\cal{L}_N^{\otimes\,b_{j,N}}\longrightarrow\,K_{\log}
\end{equation*}
We denote the moduli space of $W$-structures by $\mathscr{W}_{g,k}$. By \cite{FJR2}, the orbifold structure at a marked point
(or node) is specified by a group element $\gamma\in G_{\textrm{max}}$. By fixing the orbifold decorations at marked point, we has the decomposition:
\begin{equation*}
\mathscr{W}_{g,k}=\sum_{\boldsymbol\gamma}\mathscr{W}_{g,k}(\boldsymbol\gamma)
\end{equation*}
Here $\boldsymbol\gamma=(\gamma_1,\cdots,\gamma_k)$ is the orbifold decorations at marked points.
If we forget both the $W$-structure and the orbifold structure, then we have the forgetful morphism:
\begin{equation*}
\textrm{st}:\mathscr{W}_{g,k}\longrightarrow\overline{\mathscr{M}}_{g,k}
\end{equation*}
$\overline{\mathscr{M}}_{g,k}$ is the Deligne-Mumford stack of stable curves..

For each $W$-structure $(C,p_1,\cdots,p_k,\cal{L}_1,\cdots,\cal{L}_N,\varphi_1,\cdots,\varphi_s)$, we can assign a \emph{$G$-decorated dual graph} $\Gamma$. $\Gamma$ is the dual graph of the underlying curve, with each vertex representing an irreducible component of the underlying curve, and each edge representing a node, each half-edge representing a marked point. For each vertex $\gamma$, let $g_{\gamma}$ be the genus of the corresponding component. And for each half-edge $\tau$ of $\Gamma$, we assign an element $\gamma_{\tau}\in\,G$. If we assign each edge a pair of decorations $\gamma_+\in\,G$ and $\gamma_{+}^{-1}$, then we say that the dual graph $\Gamma$ is \emph{fully $G$-decorated}.

For each $\Gamma$, the moduli of $W$-structures with $\Gamma$ the corresponding $G$-decorated dual graph is also a stack, which we denote by $\mathscr{W}(\Gamma)$. We have the canonical inclusion map: $i:\mathscr{W}(\Gamma)\longrightarrow\mathscr{W}_{g,k}(\boldsymbol\gamma)$.
In \cite{FJR2}, the authors constructed a \emph{virtual fundamental cycle}:
\begin{equation*}
[\mathscr{W}(\Gamma)]^{\mathrm{vir}}\in\,H_{*}(\mathscr{W}(\Gamma),\mathbb{Q})\otimes\prod_{\tau\in\,T(\Gamma)}H_{N_{\gamma_{\tau}}}(\mathbb{C}_{\gamma_{\tau}}^{N_{\gamma_{\tau}}},W_{\gamma_{\tau}}^{\infty},\mathbb{Q})^G
\end{equation*}
where $\gamma_{\tau}\in\,G$.
$[\mathscr{W}_{g,k}(\boldsymbol\gamma)]^{\mathrm{vir}}$ is defined in the similar fashion. And we have
\begin{equation*}
[\mathscr{W}(\Gamma)]^{\mathrm{vir}}=i^*\,[\mathscr{W}_{g,k}(\boldsymbol\gamma)]^{\mathrm{vir}}
\end{equation*}
Using the virtual cycle, they defined \emph{a cohomological field theory}
$$\Lambda_{g,k}^{W,G}:(H^*_{FJRW})^{\otimes k}\longrightarrow H^*(\overline{\mathscr{M}}_{g,k})$$
by
\begin{equation*}
\Lambda_{g,k}^{W,G}(\boldsymbol\alpha):=\frac{|G|^g}{\deg(\textrm{st})}\textrm{PD}\,\textrm{st}_*\Big([\mathscr{W}_{g,k}(\gamma)]^{\mathrm{vir}}\cap\prod_{i=1}^{k}\alpha_i\Big)
\end{equation*}
for $\alpha_i\in\,H_{\gamma_i}$, and then extended linearly to the whole space. In this paper, we always fix a basis of $H^*_{FJRW}$ and choose those $\alpha_i$'s in the fixed basis. Here, $\textrm{PD}$ is the Poincar\'{e} duality map. The \emph{FJRW ancestor correlators} for $(W,G)$ are defined to be
\begin{equation}
\LD\tau_{l_1}(\alpha_1),\cdots,\tau_{l_k}(\alpha_k)\RD_g^{W,G}
=\int_{\overline{\mathscr{M}}_{g,k}}\Lambda_{g,k}^{W,G}(\alpha_1,\cdots,\alpha_k)\prod_{i=1}^k\psi_i^{l_i}
\end{equation}
The correlator is called \emph{primary} if all $l_i=0$.
%Since we have the linearity, we always choose those $\alpha_i$'s from a fixed basis of $H^*_{FJRW}(W, G_{\textrm{max}})$.
We assign a formal variable $t_{i, l}$ for the insertion
$\tau_l(\alpha_i)$ and the \emph{FJRW generating function} of $(W,G)$ is defined as follows:
\begin{equation}
\cal{F}_{g,W,G}=\sum_{n}\sum_{l_i}\sum_{j_i}\LD\tau_{l_1}(\alpha_{j_1}),\cdots,\tau_{l_n}(\alpha_{j_k})\RD_g^{W,G}
\frac{\prod_{i}t_{j_i,l_i}}{\prod_{i} l_i!}
\end{equation}
The \emph{total ancestor potential} of FRJW theory for $(W,G)$ is defined to be
\begin{equation}
\cal{A}_{FJRW}^{W,G}=\exp(\sum_{g\geq0}\hbar^{2g-2}\cal{F}_{g,W,G})
\end{equation}
In this paper, we also consider the {\em FJRW ancestor correlator function} at $\textbf{t}$,
\begin{equation}
\LD\LD\tau_{l_1}(\alpha_1),\cdots,\tau_{l_k}(\alpha_k)\RD\RD_g^{W,G}(\textbf{t})
=\sum_n\frac{1}{n!}\int_{\overline{\mathscr{M}}_{g,n+k}}\Lambda_{g,k+n}^{W,G}(\alpha_1,\cdots,\alpha_k, \textbf{t}, \cdots,\textbf{t})\prod_{i=1}^k\bar{\psi}_i^{l_i}
\end{equation}
where $\bar{\psi}_i=\pi^*_{k,n}(\psi_i)$ is pulling back the $\psi$-classes by the forgetful morphism
$\pi_{k,n}: \overline{\mathscr{M}}_{g,k+n}\rightarrow \overline{\mathscr{M}}_{g,k}$. We denote its generating function and corresponding ancestor potential function by $\cal{F}_{g,W,G}(\textbf{t}), \cal{A}_{FJRW}^{W,G}(\textbf{t})$.

\subsubsection{Some properties of $[\mathscr{W}_{g,k}(\gamma)]^{\mathrm{vir}}$.}
Now let us fix $\Gamma$ to be a $G$-decorated dual graph of genus $g$, $k$ marked point orbi-curve with only one irreducible component, and with half-edges decorated by $\gamma_1,\cdots,\gamma_k$. We list some properties of  $[\mathscr{W}_{g,k}(\gamma)]^{\mathrm{vir}}$
which are useful to us. We refer readers to \cite{FJR2} for a full set of axioms.
\begin{enumerate}
\item (Selection rule)
The complex degree of the virtual fundamental cycle $[\mathscr{W}(\Gamma)]^{\mathrm{vir}}$ is $3g-3+k-D$, where $D$ is the codimension:
\begin{equation*}
D:=-\sum_{i=1}^{N}\textrm{index}(\cal{L}_i)=\hat{c}_{W}(g-1)+\sum_{\tau\in\,T(\Gamma)}\iota_{\gamma_{\tau}}
\end{equation*}
Thus for elliptic singularities, $\LD\tau_{l_1}(\alpha_1),\cdots,\tau_{l_k}(\alpha_k)\RD_g^{W,G}$ is nonzero unless
\begin{equation}\label{deg:FJRW}
\sum_{i=1}^k\deg_W(\alpha_i)+\sum_{i=1}^k l_i=2g-2+k
\end{equation}

\item (Line bundle criterion). If the moduli space $\mathscr{W}_{g,n}(\boldsymbol\gamma)$ is non-empty, then the degree of the desingularized line bundle $|\cal{L}_j|$ is an integer, i.e.
\begin{equation}\label{deg:Line BD}
\deg(|\cal{L}_j|)=\Big(q_j(2g-2+k)-\sum_{l=1}^{k}\Theta_{i}^{\gamma_l}\Big)\in\mathbb{Z}
\end{equation}

\item (Concavity) Suppose all marked points are narrow, and the concavity condition $\pi_{*}\bigoplus_{i=1}^{N}\cal{L}_i=0$ holds; then
\begin{equation}\label{eq:Concave}
[\mathscr{W}_{g,k}(\boldsymbol\gamma)]^{\mathrm{vir}}
=c_{\textrm{top}}\Big(-R^1\pi_*\bigoplus_{i=1}^N\cal{L}_i\Big)\cap[\mathscr{W}_{g,k}(\boldsymbol\gamma)]
\end{equation}
\end{enumerate}

\subsubsection{Mirror symmetry of Frobenius algebra}
The three point correlators and pairing define a Frobenius algebra structure, where the multiplication is given by $$\LD\alpha\star\beta,\gamma\RD=\LD\alpha,\beta,\gamma\RD_{0,3}^{W,G}.$$
It follows from \cite{Kr} that the FJRW-Frobenius
algebras of $P_8, X^T_9, J^T_{10}$ are isomorphic to Milnor rings of $P_8, X_9, J_{10}$. In this section, we review the results for the purpose of setting up the notations such as the canonical basis. Readers are refered to \cite{Kr} for the details of computation.

{\bf $P_8$-Case: }

Now we consider the case of $P_8:=x^3+y^3+z^3$. The weights are $q_x=q_y=q_z=1/3.$
Then $G_{\textrm{max}}=\mathbb{Z}_3\times\mathbb{Z}_3\times\mathbb{Z}_3$.
We denote $$\gamma_x=\textrm{Diag}(\omega^2,\omega,\omega),\gamma_y=\textrm{Diag}(\omega,\omega^2,\omega),\gamma_z=\textrm{Diag}(\omega,\omega,\omega^2),$$
where $\omega=e^{2\pi i/3}$. These are also generators in $G_{\textrm{max}}$. We denote
\begin{equation*}
\be_{x}:=1\in H^{\textrm{mid}}(\mathbb{C}_{\gamma_x}^{N^{\gamma_x}},W_{\gamma_x}^{\infty},\mathbb{Q})
\end{equation*}
$H_{FJRW}^{P_8,G_{\textrm{max}}}$ is spanned by a canonical basis
\begin{equation*}
\textbf{1}=\be_{J},\be_{x},\be_{y},\be_{z},\be_{xy},\be_{xz},\be_{yz},\be_{J^{-1}}=\be_{xyz}.
\end{equation*}
where the subscripts correspond to the square of $\omega$ which shows up in the diagonal element of the group $G_{\textrm{max}}$. We have the table:
\begin{center}
\begin{tabular}{l||l|l|l|l|l|l|l|l } $\gamma=$&$\bf{1}$&$\be_{x}$&$\be_{y}$&$\be_{z}$&$\be_{xy}$&$\be_{xz}$&$\be_{yz}$&$\be_{J^{-1}}$\\\hline
  $\Theta_x^{\gamma}$&1/3         & 2/3   &1/3    &1/3    &2/3     &2/3     &1/3     &2/3     \\\hline
  $\Theta_y^{\gamma}$&1/3         & 1/3   &2/3    &1/3    &2/3     &1/3     &2/3     &2/3     \\\hline
  $\Theta_z^{\gamma}$&1/3         & 1/3   &1/3    &2/3    &1/3     &2/3     &2/3     &2/3     \\\hline
  $\deg_{W}$&0           &1/3    &1/3    &1/3    &2/3     &2/3     &2/3     &1       \\
\end{tabular}
\end{center}
Up to symmetry, all the nonzero genus-0 3-point correlators are
\begin{equation*}
\LD\textbf{1},\textbf{1},\be_{xyz}\RD_{0}^{P_8}=\LD\textbf{1},\be_{x},\be_{yz}\RD_{0}^{P_8}=\LD\be_{x},\be_{y},\be_{z}\RD_{0}^{P_8}=1.
\end{equation*}
The map $\Psi_{P_8}:\mathbb{C}[X,Y,Z]\longrightarrow H_{FJRW}^{P_8,G_{\textrm{max}}}$ is surjective with kernel containing $X^2, Y^2$ and $Z^2$ and defines a graded algebra isomorphism $\Psi_{P_8}:\mathscr{Q}_{P_8}\longrightarrow H_{FJRW}^{P_8,G_{\textrm{max}}}$.

{\bf $X_{9}^T$-Case}:

Here $X_{9}^T:=x^2y+y^3+xz^2$.
The weights are $q_x=q_y=q_z=1/3.$ $G_{\textrm{max}}$ is generated by
%$\gamma = \begin{pmatrix}\xi^{10} &  &  \\ & \xi^{4} &  \\ &  & \xi\end{pmatrix}$,
$\gamma=\textrm{Diag}(\xi^{10},\xi^{4},\xi)$,
where $\xi=e^{\pi i/6}$, is a 12-th root of unity.
As a vector space, $H_{FJRW}^{X_{9}^T,G_{W}}$ is generated by a canonical basis
\begin{equation*}
x\be_0, \be_1,\be_2,\be_4,\be_5, x\be_0, \be_7, \be_8, \be_{10}, \be_{11}.
\end{equation*}
where $\be_{i}:=1\in H^{\textrm{mid}}(\mathbb{C}_{\gamma^i}^{N^{\gamma^i}},W_{\gamma^i}^{\infty},\mathbb{Q})
$. $\be_0:=dx\wedge dy$, and $x\be_0$ is a broad sector. We list the numerical invariants of the generators as follows:
\begin{center}
\begin{tabular}{l||l|l|l|l|l|l|l|l|l }    $\gamma=$&$\be_{1}$&$\be_{2}$&$\be_{4}$&$\be_{5}$&$x\be_0$&$\be_{7}$&$\be_{8}$&$\be_{10}$&$\be_{11}$\\\hline
  $\Theta_x^{\gamma}$&5/6&2/3      &1/3        &1/6      &0       &5/6         &2/3        &1/3         &1/6 \\\hline
  $\Theta_y^{\gamma}$&1/3  &2/3      &1/3        &2/3    &0        &1/3        &2/3        &1/3         &2/3  \\\hline
  $\Theta_z^{\gamma}$&1/12      &1/6        &1/3      &5/12       &1/2           &7/12        &2/3         &5/6  &11/12 \\\hline
  $\deg_{W}$&1/4&1/2      &0          &1/4      &1/2       &3/4         &1        &1/2         &3/4 \\
\end{tabular}
\end{center}
Here the identity is $\be_{4}=\textbf{1}$. The nonzero pairings are $\LD\be_{i},\be_{12-i}\RD=1, \LD\,x\be_0,x\be_0\RD=-\frac{1}{2}.$
All nonzero genus-0 3-point correlators are:
\begin{itemize}
\item $\LD\mathbf{1},\be_{i},\be_{12-i}\RD_{0}^{X_9^T}=1, \LD\mathbf{1},x\be_0,x\be_0\RD_{0}^{X_9^T}=-\frac{1}{2}$;\\
\item $\LD\be_{1},\be_{1},\be_{2}\RD_{0}^{X_9^T}=-2,\LD\be_{1},\be_{5},\be_{10}\RD_{0}^{X_9^T}=1,\LD\be_{5},\be_{5},x\be_0\RD_{0}^{X_9^T}=\pm 1.$
\end{itemize}
The map $\Psi_{X_9}:\mathbb{C}[X,Y,Z]\longrightarrow H_{FJRW}^{X_{9}^T,G_{\textrm{max}}}$ given by
\begin{equation*}
X\longmapsto \be_{1}, Y\longmapsto \be_{5}, Z\longmapsto \be_{10}.
\end{equation*}
defines a graded algebra isomorphism from the Milnor ring $\mathscr{Q}_{X_9}$ to $H_{FJRW}^{X_{9}^T,G_{\textrm{max}}}$.
%\begin{rem}We have the multiplication:\begin{equation*}\be_2=\be_1\star\be_5,\be_7=\be_1\star\be_{10},\be_{11}=\be_{5}\star\be_{10},x\be_{0}=\mp\frac{1}{2}\be_{5}\star\be_{5}.\end{equation*}We also have an 1-1 correspondence:\begin{eqnarray*}1\mapsto \mathbf{1}, X\mapsto \be_{1}, Y\mapsto \be_5, XY\mapsto\be_{2}, Y^2\mapsto\mp2\,x\be_{0};\\XYZ\mapsto\be_8, YZ\mapsto\be_{11}, XZ\mapsto\be_7, Z\mapsto\be_{10}.\end{eqnarray*}\end{rem}

{\bf $J_{10}^T$-Case:}

Here $J_{10}^T=x^3+y^3+xz^2.$ Again, $q_x=q_y=q_z=1/3.$
The group elements are $\gamma_{i,j}=\textrm{Diag}(\omega^{2j},\omega^{i},\xi^j)$, where $\omega=e^{2\pi i/3}, \xi=e^{\pi i/3}$. Hence
%$\gamma_{i,j} = \begin{pmatrix}\omega^{2j} &  &  \\ & \omega^{i} &  \\ &  & \xi^j\end{pmatrix}$
$\gamma_{i,j}\in\mathbb{Z}_3\times\mathbb{Z}_6=G_{\textrm{max}}.$
The state space is
  \begin{equation*}
  H_{FJRW}^{J_{10}^T,G_{\textrm{max}}}=\bigoplus_{i=1,2;0\leq j\leq5, j\neq3.}\mathbb{C}\{\gamma_{i,j}\}
  \end{equation*}
For this case, the ring structure of $H_{FJRW}^{J_{10}^T,G_{\textrm{max}}}$ can also be computed using the Sums of Singularities Axiom in \cite{FJR2}. We have
\begin{equation*}
H_{FJRW}^{J_{10}^T,G_{\textrm{max}}}\cong H_{FJRW}^{A_2,G_{A_2}}\otimes H_{FJRW}^{D_4,G_{D_4}}
\end{equation*}
Thus $H_{FJRW}^{J_{10}^T,G_{\textrm{max}}}$ is spanned by a canonical basis
\begin{equation*}
z\be_{0},\be_1,\be_2,\be_4,\be_5,z\be_{6},\be_7,\be_8,\be_{10},\be_{11}.
\end{equation*}
where $\be_{i,j}=\be_{6(i-1)+j}$, $\be_{i,j}$ is the generator of $H^{\textrm{mid}}(\mathbb{C}_{\gamma_{i,j}}^{N^{\gamma_{i,j}}},W_{\gamma_{i,j}}^{\infty},\mathbb{Q})$. Here $z\be_{0}$ and $z\be_{6}$ are both broad sectors, others are narrow sectors.
The numerical invariants are
\begin{center}
\begin{tabular}{l||l|l|l|l|l|l|l|l|l|l }
     $\gamma=$&$z\be_{0}$&$\be_{1}$&$\be_{2}$&$\be_{4}$&$\be_{5}$&$z\be_{6}$&$\be_7$&$\be_8$&$\be_{10}$&$\be_{11}$\\\hline
  $\Theta_x^{\gamma}$&0  &2/3      &1/3        &2/3      &1/3       &0           &2/3        &1/3         &2/3  &1/3 \\\hline
 $\Theta_y^{\gamma}$&1/3&1/3      &1/3        &1/3      &1/3       &2/3         &2/3        &2/3         &2/3  &2/3 \\\hline
  $\Theta_z^{\gamma}$&0  &1/6      &1/3        &2/3      &5/6       &0           &1/6        &1/3         &2/3  &5/6 \\\hline
  $\deg_{W}$&1/3&1/6      &0          &2/3      &1/2       &2/3         &1/2        &1/3         &1   &5/6 \\
\end{tabular}
\end{center}
The ring identity is $\be_{2}=\mathbf{1}$. All the nonzero genus-0 3-point correlators are
\begin{itemize}
\item $\LD\mathbf{1},z\be_{0},z\be_{6}\RD_{0}^{J_{10}^T}=-\frac{1}{2}, \LD\mathbf{1},\be_{i},\be_{12-i}\RD_{0}^{J_{10}^T}=1;$
\item $\LD z\be_{6},\be_{1},\be_{1}\RD_{0}^{J_{10}^T}=
\LD z\be_{0},\be_{1},\be_7\RD_{0}^{J_{10}^T}=\pm 1,
\LD z\be_{0},z\be_{0},\be_8\RD_{0}^{J_{10}^T}=-\frac{1}{2},
\LD \be_{1},\be_{5},\be_8\RD_{0}^{J_{10}^T}=1.$
\end{itemize}
The map $\Psi_{J_{10}}:\mathbb{C}[X,Y,Z]\longrightarrow H_{FJRW}^{J_{10}^T,G_{\textrm{max}}}$ given by
\begin{equation*}
X\longmapsto \be_{1}, Y\longmapsto \be_8, Z\longmapsto \be_{5}.
\end{equation*}
defines a graded algebra isomorphism
$\Psi_{J_{10}}:\mathscr{Q}_{J_{10}}\longrightarrow H_{FJRW}^{J_{10}^T,G_{\textrm{max}}}.$
%\begin{rem}We have the multiplication:\begin{equation*}\be_{7}=\be_{1}\star\be_{8},\be_{11}=\be_{8}\star\be_{5},\be_{4}=\be_{1}\star\be_{5},z\be_{0}=\mp\frac{1}{2}\be_{1}\star\be_{1}.\end{equation*}We also have an 1-1 correspondence:\begin{eqnarray*}1\mapsto\mathbf{1},X\mapsto\be_{1},Y\mapsto\be_8,X^2\mapsto\mp2\,z\be_{0},XY\mapsto\be_{7};\\XYZ\mapsto\be_{10},YZ\mapsto\be_{11},XZ\mapsto\be_{4},X^2Y\mapsto\mp2\,xz\be_{0},Z\mapsto\be_{5}.\end{eqnarray*}\end{rem}

\subsection{Reconstruction}

   In this section, we introduce the main technique of the paper, namely reconstruction by tautological relations. It has been known for a long time that Gromov-Witten theory poses many tautological relations such as the WDVV relation. Our main observation is that they are sufficiently strong in our case to determine the entire theory from 3-point and some {\em basic} 4-point correlators.
The starting point of the reconstruction is the 3-point correlators of the FJRW-theory as well as the Saito-Givental theory of the mirror. The 3-point correlators of Saito-Givental theory at $W=W_0$ are structural constants of the Milnor ring. They have been identified with 3-point correlators of FJRW-theory by the first author. Then, our reconstruction works simultaneously for both the FJRW-theory (A-model) and the Saito-Givental theory (B-model) of the mirror. To simplify the notation, we work out the details for the FJRW-theory.

Recall that Givental theory is only defined over semisimple points. Hence, we have to work over the ancestor correlators. For all three cases, FJRW-state space contains a unique complex degree one element $\be_{J^{-1}}$. Let $t_{i\geq0}$ be a class of complex degree $<1$. We define the ancestor correlator
$$\langle\langle\tau_{l_1}(\alpha_1),\cdots,\tau_{l_k}(\alpha_k)\rangle\rangle_{g,k}(t_{i\geq0})=\sum_n\frac{1}{n!}\langle \tau_{l_1}(\alpha_1), \cdots, \tau_{l_k}(\alpha_k), t_{i\geq0},\cdots,t_{i\geq0}\rangle_{g,k+n}.$$
By the degree constraint, if we expand $t_{i\geq0}$ as linear combination of basis, the above correlator is a polynomial of coordinate and hence well-defined. Furthermore, the big quantum cohomology is semisimple for a generic value of $\textbf{t}$. Our reconstruction theorem applies to this general ancestor correlator.  On semi-simple point, the reconstruction also follows by \cite{T}. It is important to mention that FJRW-theory is well-defined at $\textbf{t}=0$ which is not semi simple.
In fact, we use this fact to prove that Saito-Givental theory can be extended to $\textbf{t}=0$. Our reconstruction theorem  applies FJRW-theory at $\textbf{t}=0$.
The later is crucial for our convergence theorem.

\subsubsection{Genus-0 reconstruction}

In genus-0, both FJRW-theory and Saito theory are well-defined for $\textbf{t}=0$. The ancestor correlators can obviously be expressed by ordinary correlators with $\textbf{t}=0$.

\begin{thm}\label{thm:FJRW g=0 reconstruction}
Using WDVV equations, all genus-0 primary correlators of FJRW theory for the elliptic singularities $P_8,X_9^T,J_{10}^T$ are uniquely determined by the pairing, the 3-point correlators and the following 4-point correlators:
$\LD\be_{x},\be_{x},\be_{x},\be_{xyz}\RD_{0}^{P_8}$, $\LD\be_{x},\be_{y},\be_{z},\be_{xyz}\RD_{0}^{P_8}$,
   $\LD\be_{1},\be_{5},\be_{7},\be_{7}\RD_{0}^{X_{9}^T}$,
    $\LD\be_{1},\be_{8},\be_{5},\be_{10}\RD_{0}^{J_{10}^T}, \LD\be_{8},\be_{8},\be_{8},\be_{10}\RD_{0}^{J_{10}^T}$ and $ \LD\be_{1},\be_{1},\be_{4},\be_{10}\RD_{0}^{J_{10}^T}.$
\end{thm}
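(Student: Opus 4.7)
The plan is to prove the theorem by induction on the number $n$ of marked points. The cases $n=3$ are part of the given data, and the case $n=4$ will be handled by a finite enumeration of correlators satisfying the degree axiom (\ref{deg:FJRW}). For $n \geq 5$, I would use the WDVV equations to express each correlator as a sum of products of strictly lower-point correlators together with other $n$-point correlators whose first insertion is of strictly lower ``generator-depth'' in the Milnor ring $\mathscr{Q}_W$; a descent in this depth then terminates and the inductive hypothesis applies.

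The workhorse is the $(n-3)$-fold derivative of the associativity equation evaluated at $\mathbf{t}=0$, namely
\begin{equation*}
\sum_{I \sqcup J = \{5,\ldots,n\}}\sum_\mu \langle T_a,T_b,\{T_{g_k}\}_{k\in I},T_\mu\rangle\,\eta^{\mu\nu}\,\langle T_\nu,T_e,T_f,\{T_{g_\ell}\}_{\ell\in J}\rangle = (T_b \leftrightarrow T_e).
\end{equation*}
Via the Frobenius algebra isomorphism $\Psi_W : \mathscr{Q}_W \to H^*_{FJRW}(W^T,G_{\textrm{max}})$ recalled in Section 3.1.4, any insertion $\alpha_1 \neq \mathbf{1}$ can be written as $\alpha_1 = \beta \star \gamma$ for basis elements $\beta,\gamma$ of strictly smaller generator-depth; when $\alpha_1 = \mathbf{1}$ the string equation reduces $n$ directly. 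Setting $(T_a,T_b,T_e,T_f) = (\beta,\gamma,\alpha_2,\alpha_3)$ and $\{T_{g_i}\} = \{\alpha_4,\ldots,\alpha_n\}$, the extremal term with $|J|=n-3$ on the LHS becomes $\langle \beta\star\gamma,\alpha_2,\alpha_3,\ldots,\alpha_n\rangle = \langle \alpha_1,\alpha_2,\ldots,\alpha_n\rangle$, our target. The other three extremal terms are $n$-point correlators whose distinguished insertion is $\beta$ or $\beta\star\alpha_2$, all of strictly smaller generator-depth than $\alpha_1$, while the remaining non-extremal terms are products of correlators each with at most $n-1$ insertions. This exhibits the target as a combination of such smaller data, and a descent on generator-depth closes the induction.

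For $n=4$, I would enumerate via the degree axiom all $4$-tuples $(\alpha_1,\ldots,\alpha_4)$ of basis elements with $\sum\deg_W\alpha_i = 2$, and apply the one-derivative WDVV. In each case either some $\alpha_i = \mathbf{1}$ (reducing to a 3-point correlator via the string equation) or an insertion can be factorized, producing 3-point products together with 4-point correlators of strictly smaller generator-depth. The main obstacle will be the broad sectors: $x\be_0$ in $H^*_{FJRW}(X_9^T,G_{\textrm{max}})$ and $z\be_0,z\be_6$ in $H^*_{FJRW}(J_{10}^T,G_{\textrm{max}})$ are not evidently products of narrow generators. For these one exploits the identity $\be_5\star\be_5 \propto x\be_0$, read off from the 3-point correlator $\langle\be_5,\be_5,x\be_0\rangle_0^{X_9^T}=\pm 1$ combined with the broad pairing $\langle x\be_0,x\be_0\rangle = -\tfrac{1}{2}$, and the analogous identities in $J_{10}^T$, to re-express broad insertions as products of narrow ones before WDVV is applied. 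Verifying that the finite descent in generator-depth terminates with exactly the listed basic correlators, and no others, is the delicate combinatorial content of the proof.
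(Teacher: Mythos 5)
Your overall strategy---reduce via WDVV to few-point correlators and then enumerate the 4-point cases---is the same as the paper's, which follows Lemmas 6.2.6 and 6.2.8 of \cite{FJR2} to reduce everything to \emph{basic} correlators (at most two non-primitive insertions) and then uses the degree constraint $k-2=\sum\deg_W\alpha_i\leq (k-2)P+2$ to cap basic correlators at $k=4$ (for $X_9^T$) or $k=5$ (for $P_8$, $J_{10}^T$). However, your termination argument has a genuine gap. You claim that after writing $\alpha_1=\beta\star\gamma$ and applying WDVV with $(T_a,T_b,T_e,T_f)=(\beta,\gamma,\alpha_2,\alpha_3)$, the other three extremal terms have distinguished insertion $\beta$ or $\beta\star\alpha_2$, ``all of strictly smaller generator-depth than $\alpha_1$.'' This is false for $\beta\star\alpha_2$: its depth can equal or exceed that of $\alpha_1$, and in fact the move can reproduce the target itself. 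Concretely, for $\LD\be_x,\be_x,\be_{xy},\be_{xz}\RD_0^{P_8}$ with $\alpha_1=\be_{xz}=\be_x\star\be_z$ and $(T_e,T_f)=(\be_x,\be_x)$, the fourth extremal term is $\LD\be_x,\be_x,\be_{xy},\be_z\star\be_x\RD$, which is the target again; the relation degenerates to a consistency identity and reconstructs nothing. The paper instead handles this correlator by a different WDVV move, transporting a primitive factor between slots: $\LD\be_x,\be_x,\be_x\star\be_y,\be_{xz}\RD=\LD\be_x,\be_x,\be_x,\be_y\star\be_{xz}\RD$, which is exactly why $\LD\be_x,\be_x,\be_x,\be_{xyz}\RD_0^{P_8}$ appears in the list of generators. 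So the well-founded invariant is not the depth of one insertion but the number of non-primitive insertions, and the factorizations and slot assignments must be chosen adaptively, case by case.

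Two further points. First, your induction with base case $n=4$ does not account for the 5-point basic correlators that survive the reduction for $P_8$ and $J_{10}^T$ (e.g.\ $\LD\be_x,\be_x,\be_x,\be_{xyz},\be_{xyz}\RD_{0,5}^{P_8}$); these cannot be lowered by a generic depth descent and require the specific choice $\gamma_1=\be_x,\gamma_2=\be_{xyz},\gamma_3=\be_{xz},\gamma_4=\be_y$ for which $\be_x\star\be_{xyz}=\be_x\star\be_{xz}=\be_{xyz}\star\be_y=0$, killing all competing extremal terms. Second, you explicitly defer ``the delicate combinatorial content'' of verifying that the 4-point descent terminates at exactly the six listed correlators---but that enumeration (the $11$ candidates for $P_8$, the $18$ basic correlators and $17$ relations for $X_9^T$, etc.) \emph{is} the proof of the theorem; without it the statement that these six suffice, and that no others are needed, is not established. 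Your observation that the broad sectors $x\be_0$, $z\be_0$, $z\be_6$ can be expressed as products of narrow primitives via the ring isomorphism is correct and is indeed how the paper's framework absorbs them.
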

We first introduce some useful concepts.
\begin{df}
We call a homogeneous element $\gamma$ is \emph{primitive} if it cannot be written as $\gamma=\gamma_1\star\gamma_2$ for $\deg_{W}\gamma_1$ and $\deg_{W}\gamma_2$ nonzero.
\end{df}
\begin{df}
We call a genus-0 correlator a \emph{basic correlator} if there are at most two non-primitive insertions, neither of which are 1.
\end{df}
Our general scheme is following recursion formula from the WDVV equation.
\begin{lm}\label{lm:FJRW-primitive}
We can reconstruct genus-0 primary correlators of FJRW theory for the three cases by basic correlators with at most four marked points.
\end{lm}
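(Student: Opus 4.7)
The plan is by induction on the number $n$ of marked points, with a secondary induction on the number of non-primitive insertions. The base case $n=3$ is given by the Frobenius algebra structure constants (the 3-point correlators and the pairing), and the $n=4$ basic correlators are exactly the data we are allowed to keep.

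For the inductive step, I would first apply the string equation to remove every insertion equal to $\mathbf{1}$, reducing $n$. Suppose the remaining correlator $\LD \alpha_1, \ldots, \alpha_n \RD_0^W$ has no identity insertions but is either not basic (three or more non-primitive insertions) or has $n \geq 5$. Pick a non-primitive insertion $\alpha_i$ and decompose $\alpha_i = \beta_1 \star \beta_2$ with $\deg_W \beta_1, \deg_W \beta_2 > 0$. The WDVV equation applied to the four distinguished insertions $\{\beta_1, \beta_2, \alpha_j, \alpha_k\}$ (for auxiliary $j,k$), after extracting the four boundary terms that carry a 3-point factor, yields
\begin{equation*}
\LD \beta_1 \star \beta_2,\, \alpha_j,\, \alpha_k,\, X \RD_0 = \LD \beta_1 \star \alpha_j,\, \beta_2,\, \alpha_k,\, X \RD_0 + \LD \beta_1,\, \alpha_j,\, \beta_2 \star \alpha_k,\, X \RD_0 - \LD \beta_1,\, \beta_2,\, \alpha_j \star \alpha_k,\, X \RD_0 + R,
\end{equation*}
where $X$ is the list of passive insertions and $R$ gathers the middle-splitting boundary contributions, each a product of two correlators with strictly fewer than $n$ marked points, and hence known by induction on $n$.

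Closing the induction requires choosing the auxiliary pair $\alpha_j, \alpha_k$ using the explicit Frobenius algebra structure (equivalently, via $\Psi_W$, the Milnor ring $\mathscr{Q}_W$) so that each of $\beta_1 \star \alpha_j$, $\beta_2 \star \alpha_k$, and $\alpha_j \star \alpha_k$ is either zero, primitive, or of strictly smaller $W$-degree than $\alpha_i$. Under such a choice, the lexicographic complexity (number of non-primitive insertions, then total $W$-degree of non-primitive insertions) strictly decreases, and iteration terminates at a basic correlator. For a basic correlator with $n \geq 5$, the degree constraint $(\ref{deg:FJRW})$ forces at least one insertion with $W$-degree large enough to be written as a nontrivial product, and the same WDVV swap then strictly reduces $n$ modulo the contributions in $R$.

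The main obstacle is guaranteeing that such a good choice of auxiliary insertions always exists, since WDVV can a priori introduce new non-primitive insertions through $\beta_1 \star \alpha_j$ or $\alpha_j \star \alpha_k$. Verifying this reduces to an inspection of the explicit multiplication tables for $P_8$, $X_9^T$, and $J_{10}^T$ recorded above: one checks directly that for each non-primitive $\alpha_i$ there is an auxiliary pair whose swap is either strictly simpler in the sense of the complexity measure, or else lands in the list of basic 4-point correlators appearing in Theorem~\ref{thm:FJRW g=0 reconstruction}. This finite case analysis is precisely what pins down the specific correlators listed there as the minimal reconstruction data.
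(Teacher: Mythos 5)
Your proposal is correct and takes essentially the same route as the paper: the paper likewise reduces to basic correlators by an inductive WDVV argument (citing Lemmas 6.2.6 and 6.2.8 of \cite{FJR2} for the step you re-derive), bounds the number of marked points of a basic correlator via the degree constraint $k-2=\sum_i\deg_W\alpha_i\le (k-2)P+2$ (giving $k\le 5$, and $k\le 4$ for $X_9^T$), and then eliminates the four remaining basic 5-point correlators by exactly the auxiliary-insertion choice you describe, e.g.\ $\gamma_1=\be_x,\gamma_2=\be_{xyz},\gamma_3=\be_{xz},\gamma_4=\be_y$ for $\LD\be_x,\be_x,\be_x,\be_{xyz},\be_{xyz}\RD_{0,5}^{P_8}$, which makes all three contact terms vanish. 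The finite inspection of the multiplication tables that you defer is the same verification the paper carries out.
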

\begin{proof}
We recall the WDVV equation for the FJRW invariant,
%\begin{eqnarray}\label{eq:WDVV-FJRW}\begin{split}&&\sum_{A\sqcup\,B=\{1,\cdots,n\}}\sum_{i,j}(-1)^{\epsilon_1(A)}\LD\gamma_1,\gamma_2,\delta_A,\alpha_i\RD\eta^{i,j}\LD\alpha_j,\delta_B,\gamma_3,\gamma_4\RD\\&&=\sum_{A\sqcup\,B=\{1,\cdots,n\}}\sum_{i,j}(-1)^{\epsilon_2(A)}\LD\gamma_1,\gamma_3,\delta_A,\alpha_i\RD\eta^{i,j}\LD\alpha_j,\delta_B,\gamma_2,\gamma_4\RD\end{split}\end{eqnarray}
%We rewrite this as
\begin{eqnarray}\label{eq:WDVV-FJRW}
\begin{split}
\LD\gamma_1,\gamma_2,\delta_S,\gamma_3\star\gamma_4\RD_{0,n+3}=&I-\LD\gamma_1\star\gamma_2,\delta_S,\gamma_3,\gamma_4\RD_{0,n+3}\\
&+\LD\gamma_1\star\gamma_3,\delta_S,\gamma_2,\gamma_4\RD_{0,n+3}
+\LD\gamma_1,\gamma_3,\delta_S,\gamma_2\star\gamma_4\RD_{0,n+3}
\end{split}
\end{eqnarray}
where $S=S(n):=\{1,\cdots,n\}, \delta_A:=\alpha_{A_1},\cdots,\alpha_{A_{|A|}}$ and $A=\{A_1,\cdots,A_{|A|}\}$,
\begin{equation*}
I=\sum_{\gamma_2\leftrightarrows\gamma_3}\sum_{\begin{subarray}{l}
A\sqcup\,B=S\\A,B\neq\emptyset
\end{subarray}}\textrm{Sign}(\gamma_2,\gamma_3)\LD\gamma_1,\gamma_3,\delta_A,\mu\RD_{0,|A|+3}\eta^{\mu,\nu}\LD\nu,\delta_B,\gamma_2,\gamma_4\RD_{0,n+3-|A|}
\end{equation*}
$\sum_{\gamma_2\leftrightarrows\gamma_3}$ means exchange $\gamma_2, \gamma_3$, and sum up.
$\textrm{Sign}(\gamma_2,\gamma_3)=1, \textrm{Sign}(\gamma_3,\gamma_2)=-1$.
Here we also use the Einstein summation convention for $\mu,\nu$.
According to \cite{FJR2} Lemma 6.2.6 and Lemma 6.2.8, using the above WDVV equation, all genus-0 primary correlators can be reconstructed uniquely from basic correlators. For all the three singularities listed above, the selection rule (\ref{deg:FJRW}) implies the number of marked points for a basic correlator $\LD\alpha_1,\cdots,\alpha_k\RD_{0,k}$ should satisfy
\begin{equation*}
k-2=\sum_{i=1}^k\deg_W\alpha_i\leq(k-2)P+2
\end{equation*}
where for the singularity $W$, $P$ is the maximum complex degree for the corresponding FJRW-primitive class. We can easily compute $P=1/3$ for $P_8$, $J_{10}^T$, and $P=1/4$ for $X_{9}^T.$
Thus, for the $X_{9}^T$ case, $k=4$. For the other two cases, $k=5$. We list all the basic 5-point correlators. Up to symmetry, they are
$\LD\be_{x},\be_{x},\be_{x},\be_{xyz},\be_{xyz}\RD_{0,5}^{P_8}$,
$\LD\be_{x},\be_{x},\be_{y},\be_{xyz},\be_{xyz}\RD_{0,5}^{P_8}$,
$\LD\be_{x},\be_{y},\be_{z},\be_{xyz},\be_{xyz}\RD_{0,5}^{P_8}$
and $\LD\be_8,\be_8,\be_8,\be_{10},\be_{10}\RD_{0,5}^{J_{10}^T}$.

Now we apply the WDVV equation (\ref{eq:WDVV-FJRW}) to $\LD\be_{x},\be_{x},\be_{x},\be_{xyz},\be_{xyz}\RD_{0,5}^{P_8}$. We choose $\gamma_1=\be_x,\gamma_2=\be_{xyz},\gamma_3=\be_{xz},\gamma_4=\be_y,\delta_1=\delta_2=\be_x$, then
$\be_{x}\star\be_{xyz}=\be_x\star\be_{xz}=\be_{xyz}\star\be_y=0$,
%\begin{equation*}\LD\be_x,\be_{xyz},\be_x,\be_x,\be_{xyz}\RD_{0,5}=\LD\be_{x},\be_{xz},\be_{x},\mu\RD_{0,4}\eta^{\mu,\nu}\LD\nu,\be_{x},\be_{xyz},\be_{y}\RD_{0,4}-\LD\be_{x},\be_{xyz},\be_{x},\mu\RD_{0,4}\eta^{\mu,\nu}\LD\nu,\be_{x},\be_{xz},\be_{y}\RD_{0,4}\end{equation*}
and the reconstruction follows. The other three cases are reconstructed similarly.
\end{proof}
Now we give the proof of Theorem \ref{thm:FJRW g=0 reconstruction},
\begin{proof}
We classify all genus-0 4-point \emph{basic} primary correlators.

{\bf For $P_8$ case:}
We list all the possible non-vanishing basic 4-point correlators up to symmetry. They are:
\begin{enumerate}
\item
    $\LD\be_{x},\be_{x},\be_{x},\be_{xyz}\RD_{0}^{P_8},\LD\be_{x},\be_{x},\be_{xy},\be_{xz}\RD_{0}^{P_8};$
\item $\LD\be_{x},\be_{y},\be_{z},\be_{xyz}\RD_{0}^{P_8},\LD\be_{x},\be_{x},\be_{yz},\be_{yz}\RD_{0}^{P_8},\LD\be_{x},\be_{y},\be_{xz},\be_{yz}\RD_{0}^{P_8};$
\item     $\LD\be_{x},\be_{x},\be_{y},\be_{xyz}\RD_{0}^{P_8},\LD\be_{x},\be_{y},\be_{xz},\be_{xz}\RD_{0}^{P_8},\LD\be_{x},\be_{y},\be_{xy},\be_{xz}\RD_{0}^{P_8};$
\item
    $\LD\be_{x},\be_{x},\be_{xy},\be_{xy}\RD_{0}^{P_8},\LD\be_{x},\be_{y},\be_{xy},\be_{xy}\RD_{0}^{P_8}.$
\end{enumerate}
Applying the WDVV equation (\ref{eq:WDVV-FJRW}) over and over again, we can show that all the correlators can be expressed as the scalar multiples of the first one in every row.
For example,
\begin{equation*}
\LD\be_{x},\be_{x},\be_{xy},\be_{xz}\RD_{0}^{P_8}=\LD\be_{x},\be_{x},\be_{x}\star\be_{y},\be_{xz}\RD_{0}^{P_8}=\LD\be_{x},\be_{x},\be_{x},\be_{y}\star\be_{xz}\RD_{0}^{P_8}.
\end{equation*}
Other cases are similar and we leave them to readers as an exercise.
%If the readers want to know the detail, please contact the second author for explicit formula.
Moreover, the scalar is determined by 3-point correlators which are the initial conditions of our reconstruction.
Furthermore,  we have vanishing results for the last two rows. For example,
as $\be_{x}\star\be_{xy}=\be_{x}\star\be_{x}=\be_{xy}\star\be_{y}=0$, WDVV equation (\ref{eq:WDVV-FJRW}) implies
\begin{equation*}
\LD\be_{x},\be_{x},\be_{xy},\be_{x}\star\be_{y}\RD_{0}^{P_8}=0.
\end{equation*}
%\begin{eqnarray*}\begin{split}\LD\be_{x},\be_{x},\be_{xy},\be_{xy}\RD_{0}^{P_8}&=\LD\be_{x},\be_{x},\be_{xy},\be_{x}\star\be_{y}\RD_{0}^{P_8}\\&=\LD\be_{x},\be_{x}\star\be_{x},\be_{xy},\be_{y}\RD_{0}^{P_8}+\LD\be_{x},\be_{x},\be_{x},\be_{xy}\star\be_{y}\RD_{0}^{P_8}-\LD\be_{x},\be_{x}\star\be_{xy},\be_{x},\be_{y}\RD_{0}^{P_8}\\&=0.\end{split}\end{eqnarray*}
Thus we only need to compute $\LD\be_{x},\be_{x},\be_{x},\be_{xyz}\RD_{0}^{P_8},$ and $\LD\be_{x},\be_{y},\be_{z},\be_{xyz}\RD_{0}^{P_8}$.

\begin{rem}
The above vanishing results can also be obtained by the line bundle criterion (\ref{deg:Line BD}). The same applies to $J_{10}^T$ case for  $\LD\be_{1},\be_{8},\be_{5},\be_{10}\RD_{0}^{J_{10}^T}=0$. However, there is no such criterion in the B-model. Here, we stick with the WDVV equation which applies for both the A-model and the B-model.
\end{rem}

{\bf $X_{9}^T$-case:}
There are 18 basic 4-point correlators. Using the WDVV equation,
\begin{equation}\label{eq:FJRW-baisc1}
\LD\be_{1},\alpha,\beta,\be_5\star\gamma\RD_{0}^{X_{9}^T}
+\LD\be_{1}\star\alpha,\beta,\be_5,\gamma\RD_{0}^{X_{9}^T}
=\LD\be_{1},\gamma,\beta,\be_5\star\alpha\RD_{0}^{X_{9}^T}
+\LD\be_{1}\star\gamma,\beta,\be_5,\alpha\RD_{0}^{X_{9}^T}
\end{equation}
\begin{equation}\label{eq:FJRW-basic2}
\LD\be_5,\alpha,\beta,\be_1\star\xi\RD_{0}^{X_{9}^T}
+\LD\be_5\star\alpha,\beta,\be_1,\xi\RD_{0}^{X_{9}^T}
=\LD\be_5,\xi,\beta,\be_1\star\alpha\RD_{0}^{X_{9}^T}
+\LD\be_5\star\xi,\beta,\be_1,\alpha\RD_{0}^{X_{9}^T}
\end{equation}
We can choose as follows:
\begin{itemize}
\item
$\alpha=\be_2,\be_{10},x\be_6$, $\beta=\be_1,\be_5$, $\gamma=\be_7$,  $\xi=\be_{11}$.
\item
$\alpha=\be_8,\beta=\be_1$ or $\be_5, \gamma=\be_5, \xi=\be_1$.
\item
$\alpha=\be_{11},\beta=\be_1,\gamma=\be_{10}$ in case of (\ref{eq:FJRW-baisc1}).
\end{itemize}
There are 17 equations among the 18 basic 4-point correlators. For example, the last choice gives
\begin{equation*}
\LD\be_{1},\be_{11},\be_1,\be_{11}\RD_{0}^{X_{9}^T}+\LD\be_8,\be_1,\be_5,\be_{10}\RD_{0}^{X_{9}^T}
=\LD\be_7,\be_1,\be_5,\be_{11}\RD_{0}^{X_{9}^T}
\end{equation*}
By tedious simplification, we find that all the 18 basic 4-point correlators are scalar multiple of $\LD\be_1,\be_5,\be_7,\be_7\RD_{0}^{X_{9}^T}$.
%Let us denote its value as $A$, then we find all other possible non-vanishing basic four-point correlators are:\begin{eqnarray*}\LD\be_1,\be_1,\be_7,\be_{11}\RD_{0}^{X_{9}^T}=-A,&\LD\be_1,\be_5,x\be_0,\be_8\RD_{0}^{X_{9}^T}=\pm\,A,&\LD\be_5,\be_5,\be_2,\be_8\RD_{0}^{X_{9}^T}=-A,\\\LD\be_5,\be_5,\be_{11},\be_{11}\RD_{0}^{X_{9}^T}=A/2,&\LD\be_1,\be_1,\be_{10},\be_8\RD_{0}^{X_{9}^T}=-2A.&\end{eqnarray*}

{\bf For $J_{10}^T$-case:}
We use the same technique. Finally, the basic 4-point correlators are all scalar multiple of the following three special ones:
$\LD\be_{1},\be_{8},\be_{5},\be_{10}\RD_{0}^{J_{10}^T}$,
$\LD\be_{8},\be_{8},\be_{8},\be_{10}\RD_{0}^{J_{10}^T}$, $\LD\be_{1},\be_{1},\be_{4},\be_{10}\RD_{0}^{J_{10}^T}$.
%, where the scalar is determined by 3-point correlators.

\end{proof}

\subsubsection{Genus-1 reconstruction}
In this subsection, we show that the genus-1 primary correlators can be reconstructed from genus-0 primary correlators.
By the selection rule (\ref{deg:FJRW}), the nonvanishing genus-1 primary correlators must be of the form
$\langle\be_{J^{-1}},\cdots,\be_{J^{-1}}\rangle_{1,n}$, where $\be_{J^{-1}}=\be_{xyz},\be_{8}$ and $\be_{10}$,
respectively in the $P_8, X_9^T$ and $J_{10}^T$ cases.
%{\bf  In particular, $t=0$. Fortunately, Saito-Givental theory is well-defined for this case as well. Is it true?}

Our main tool is the Getzler's relation. In \cite{Ge}, Getzler introduced a linear relation between codimension two cycles in $H_*(\overline{\mathscr{M}}_{1,4},\mathbb{Q})$. Here we briefly introduce this relation for our purpose. Consider the dual graph,
\begin{center}
\begin{picture}(50,20)
    \put(-24,9){$\Delta_{12,34}=$}

    % Circles
    \put(10,9){\circle{2}}

	% Lines
	\put(0,9){\line(-3,4){5}}
    \put(0,9){\line(-3,-4){5}}
	\put(0,9){\line(1,0){9}}
	\put(11,9){\line(1,0){9}}
    \put(20,9){\line(3,4){5}}
    \put(20,9){\line(3,-4){5}}

    \put(-8,2){2}
    \put(-8,14){1}
    \put(26,2){4}
    \put(26,14){3}
	%\multiput(20,-.5)(0,1){2}{\line(1,0){10}}
	%\put(30,0){\line(1,0){10}}
\end{picture}
\end{center}
This graph represents a codimension-two stratum in $\overline{\mathscr{M}}_{1,4}$: A circle represents a genus-1 component, other vertices represent genus-0 components. An edge connecting two vertices represents a node, a tail (or half-edge) represents a marked point on the component of the corresponding vertex. $\Delta_{2,2}$ is defined to be the $S_4$-invariant of the codimension-two stratum in $\overline{\mathscr{M}}_{1,4}$,
\begin{equation*}
\Delta_{2,2}=\Delta_{12,34}+\Delta_{13,24}+\Delta_{14,23}
\end{equation*}
We denote $\delta_{2,2}=[\Delta_{2,2}]$ the corresponding cycle in $H_4(\overline{\mathscr{M}}_{1,4},\mathbb{Q})$. Other strata are defined similarly. For more details, see \cite{Ge}. We list the corresponding dual graph,
\begin{center}
\begin{picture}(50,20)

    \put(-35,9){\circle{2}}
	\put(-40,15){$\delta_{2,3}:$}

	% Lines
	\put(-45,9){\line(1,0){9}}
	\put(-34,9){\line(1,0){9}}
    \put(-25,9){\line(2,1){9}}
    \put(-25,9){\line(2,-1){9}}
    \put(-16,4.5){\line(2,1){9}}
    \put(-16,4.5){\line(2,-1){9}}
    % Circles

    \put(10,9){\circle{2}}
	\put(10,15){$\delta_{2,4}:$}

	% Lines

	\put(11,9){\line(1,0){9}}
    \put(20,9){\line(2,1){9}}
	\put(20,9){\line(1,0){9}}
    \put(20,9){\line(2,-1){9}}
    \put(29,4.5){\line(2,1){9}}
    \put(29,4.5){\line(2,-1){9}}
	%\multiput(20,-.5)(0,1){2}{\line(1,0){10}}
	%\put(30,0){\line(1,0){10}}

    % Circles
    \put(60,9){\circle{2}}
	\put(60,15){$\delta_{3,4}:$}
	% Lines

	\put(79,4.5){\line(1,0){9}}
	\put(61,9){\line(1,0){9}}
    \put(70,9){\line(2,1){9}}
    \put(70,9){\line(2,-1){9}}
    \put(79,4.5){\line(2,1){9}}
    \put(79,4.5){\line(2,-1){9}}
\end{picture}
\end{center}

\begin{center}
\begin{picture}(50,20)

	\put(-40,17){$\delta_{0,3}:$}
	\put(10,17){$\delta_{0,4}:$}
	\put(60,17){$\delta_{\beta}:$}

    % Circles
    \put(-35,9){\circle{10}}

	% Lines

	\put(-21,4.5){\line(1,0){9}}
    \put(-30,9){\line(2,1){9}}
    \put(-30,9){\line(2,-1){9}}
    \put(-21,4.5){\line(2,1){9}}
    \put(-21,4.5){\line(2,-1){9}}

        % Circles
    \put(15,9){\circle{10}}

	% Lines

    \put(29,9){\line(4,3){9}}
    \put(29,9){\line(3,1){9}}
	\put(20,9){\line(1,0){9}}
    \put(29,9){\line(4,-3){9}}
    \put(29,9){\line(3,-1){9}}

	%\multiput(20,-.5)(0,1){2}{\line(1,0){10}}
	%\put(30,0){\line(1,0){10}}

    \put(75,9){\circle{10}}

	% Lines
    \put(80,9){\line(2,1){9}}
    \put(80,9){\line(2,-1){9}}
    \put(70,9){\line(-2,1){9}}
    \put(70,9){\line(-2,-1){9}}
\end{picture}
\end{center}
According to \cite{Ge}, Getzler's relation is as follows:
\begin{equation}\label{eq:Getzler}
  12\delta_{2,2}+4\delta_{2,3}-2\delta_{2,4}+6\delta_{3,4}+\delta_{0,3}+\delta_{0,4}-2\delta_{\beta}=0.
\end{equation}

\begin{thm}\label{thm:FJRW genus-1}
For all simple elliptic singularites with maximal admissible group, the genus-1 FJRW correlators can be reconstructed from genus-0 FJRW correlators by the Getzler relation.
\end{thm}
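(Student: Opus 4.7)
The plan is to combine Getzler's relation (\ref{eq:Getzler}) with the selection rule (\ref{deg:FJRW}) and the splitting axiom of the FJRW cohomological field theory $\Lambda^{W,G}_{g,k}$. First, I extract the content of the selection rule at genus $g=1$ on primary insertions: $\sum_{i=1}^k \deg_W(\alpha_i) = k$. Since $\deg_W(\alpha) \leq \hat{c}_W = 1$ with equality if and only if $\alpha \in H_{J^{-1}}$, and since $H_{J^{-1}} = \CC \be_{J^{-1}}$ is one-dimensional in all three cases, this forces every insertion to be $\be_{J^{-1}}$. The non-vanishing genus-$1$ primary correlators therefore form a single sequence $a_n := \LD \be_{J^{-1}}, \ldots, \be_{J^{-1}} \RD^{W,G_{\textrm{max}}}_{1,n}$.

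For $n \geq 4$, I pull Getzler's relation back from $\overline{\mathscr{M}}_{1,4}$ to $\overline{\mathscr{M}}_{1,n}$ via the forgetful morphism that drops the last $n-4$ markings, multiply by $\Lambda^{W,G_{\textrm{max}}}_{1,n}(\be_{J^{-1}}^{\otimes n})$, and integrate. The splitting axiom evaluates each boundary stratum in the relation as a sum of products of smaller correlators, one factor for each irreducible component, with node insertions paired by the inverse metric $\eta^{\mu\nu}$. The stratum $\delta_\beta$ involves only genus-$0$ data, producing
\begin{equation*}
\sum_{\mu,\nu}\eta^{\mu\nu} \LD \be_{J^{-1}}^{\otimes n}, \be_\mu, \be_\nu \RD^{W,G_{\textrm{max}}}_{0,n+2}.
\end{equation*}
Each of the remaining strata $\delta_{2,2}, \delta_{2,3}, \delta_{2,4}, \delta_{3,4}, \delta_{0,3}, \delta_{0,4}$ has exactly one elliptic component; the selection rule applied to the genus-$1$ factor forces its external and node insertions all to be $\be_{J^{-1}}$, so each contribution takes the shape $a_{n'} \cdot (\text{product of genus-}0\text{ correlators})$ with $n' \leq n$.

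The resulting identity is therefore a linear relation among the $a_{n'}$ with coefficients built from genus-$0$ correlators. I expect the coefficient of the largest $a_n$ appearing (arising from the $\delta_{2,2}$ stratum when all $n-4$ extra markings pile onto a single rational bubble) to be nonzero, so this equation can be solved recursively for $a_n$ in terms of smaller $a_{n'}$ and genus-$0$ data. The recursion terminates at a base case of very small $n$, determined either by the concavity axiom (\ref{eq:Concave}) applied to the all-narrow insertion $\be_{J^{-1}}$, or by the line-bundle criterion (\ref{deg:Line BD}), which already forces $a_n = 0$ outside specific residue classes of $n$.

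The main obstacle is the combinatorial accounting: for each boundary stratum one must sum over all ways to distribute the $n-4$ extra markings among the irreducible components, track how the $\eta^{\mu\nu}$-pairings interact with the selection rule on each component, and confirm that the coefficient of $a_n$ in the resulting linear combination does not vanish. In spirit this follows the Dubrovin--Zhang / Getzler pattern of reconstructing $\mathcal{F}_1$ from semisimple $\mathcal{F}_0$, but with essential simplifications coming from the one-dimensional nature of $H_{J^{-1}}$ in the elliptic setting.
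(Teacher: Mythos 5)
Your overall strategy (pull back Getzler's relation, cap with the CohFT class, let the selection rule force $\be_{J^{-1}}$ onto the elliptic component) is the right one, but your specific choice of insertions breaks the argument at its central step. If you decorate \emph{all} $n$ markings by $\be_{J^{-1}}$ and integrate over the pullback of $\delta_{2,2}$, the correlator $a_n$ never appears: in every stratum of the pulled-back relation the elliptic component carries at most $n-2$ insertions (the maximum being attained in $\delta_{2,2}$, where it holds $n-4$ of the original markings plus two nodes), so the largest genus-1 correlator that can occur is $a_{n-2}$. Worse, even that term vanishes. In $\Delta_{12,34}$ each rational tail carries two of the distinguished markings plus a node; the selection rule on the elliptic side forces the node class there to be $\be_{J^{-1}}$, so the tail receives its $\eta$-dual $\mathbf{1}$, and the tail correlator $\LD\be_{J^{-1}},\be_{J^{-1}},\mathbf{1}\RD_{0,3}$ has degree sum $2$ against the required $2g-2+k=1$, hence vanishes (likewise with extra $\be_{J^{-1}}$'s on the tail). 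So the coefficient you ``expect to be nonzero'' is exactly zero, and the identity you obtain does not determine $a_n$.

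The fix, which is the essential idea you are missing, is to work on $\overline{\mathscr{M}}_{1,n+2}$ with $n+2$ insertions, the four distinguished ones being classes of degree $<1$ chosen in dual pairs --- e.g.\ $\be_x,\be_{yz},\be_y,\be_{xz}$ for $P_8$ --- so that each tail of $\delta_{2,2}$ contributes a nonvanishing three-point correlator $\LD\be_x,\be_{yz},\mathbf{1}\RD_{0,3}=1$, the $\eta$-pairing forces $\be_{J^{-1}}$ at both nodes of the elliptic middle component, and that component carries exactly $(n-2)+2=n$ copies of $\be_{J^{-1}}$, producing $3a_n$ with a manifestly nonzero coefficient; simultaneously, any stratum whose elliptic component contains one of the four degree-$<1$ insertions dies by the selection rule. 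Two further corrections: the strata $\delta_{0,3},\delta_{0,4}$ contribute only genus-0 data (like $\delta_\beta$, they have no elliptic vertex --- the genus is carried by a non-separating node); and the base case $n=1$ cannot be deferred to concavity or the line-bundle criterion --- it requires a separate Getzler computation on $\overline{\mathscr{M}}_{1,4}$ with insertions such as $(\be_x,\be_x,\be_x,\be_{xyz})$, where only $\delta_{3,4}$ survives and the coefficient $\tfrac43$ of $\LD\be_{xyz}\RD_{1,1}$ comes from the explicitly computed invariant $\LD\be_x,\be_x,\be_x,\be_{xyz}\RD_{0,4}=\tfrac13$.
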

\begin{proof}

{\bf $P_8$-case:}

In this case, we need to reconstruct $\LD\be_{xyz},\cdots,\be_{xyz}\RD_{1,n}^{P_8}, n\geq 2.$
We have the forgetful map $\pi_{4,n-2}:\overline{\mathscr{M}}_{1,n+2}\rightarrow\overline{\mathscr{M}}_{1,4}$.
Let $S=\{1,\cdots,n-2\}$. Thus
\begin{equation*}
\pi_{4,n-2}^{-1}(\Delta_{12,34})=\sum_{A\cup\,B\cup\,C=S}\Delta_{12A,B,C34}
\end{equation*}
Now we choose $n+2$ insertions: the first four are $\be_{x},\be_{yz},\be_{y},\be_{xz}$, the others are $\be_{xyz}$.
Integrating the class $\Lambda_{1,n+2}^{P_8}(\be_{x},\be_{yz},\be_{y},\be_{xz},\be_{xyz},\cdots,\be_{xyz})$ on $\pi_{4,n-2}^{-1}([\Delta_{12,34}])$, we have:
\begin{eqnarray*}
\begin{split}
&\int_{\pi_{4,n-2}^{-1}([\Delta_{12,34}])}\Lambda_{1,n+2}^{P_8}(\be_{x},\be_{yz},\be_{y},\be_{xz},\be_{xyz},\cdots,\be_{xyz})\\
&=\int_{[\Delta_{12,S,34}]}\Lambda_{1,n+2}^{P_8}(\be_{x},\be_{yz},\be_{y},\be_{xz},\be_{xyz},\cdots,\be_{xyz})\\
&=\LD\be_x,\be_{yz},1\RD_{0,3}^{P_8}\eta^{1,\be_{xyz}}\LD\be_{xyz},\cdots,\be_{xyz}\RD_{1,n}^{P_8}\eta^{\be_{xyz},1}\LD1,\be_y,\be_{xz}\RD_{0,3}^{P_8}\\
&=\LD\be_{xyz},\cdots,\be_{xyz}\RD_{1,n}^{P_8}
\end{split}
\end{eqnarray*}
The second equality uses the Splitting Axiom.
The first equality is a consequence of the Selection Rule (\ref{deg:FJRW}) and the String equation. The Selection rule requires that each insertion for a non-zero genus-1 primary correlator is $\be_{xyz}$ and the String Equation implies that the genus-0 primary correlator with more than four insertions will vanish if there is one insertion of the identity element. Thus the non-zero contribution partition should be $B=S$, and $A=C=\emptyset$. The corresponding decorated dual graph for $\pi^{-1}_{4,n-2}(\Delta_{12,34})$ is
\begin{center}
\begin{picture}(50,23)

    \put(-30,9){$\Delta_{12,S,34}(\be_{x},\be_{yz},\be_{y},\be_{xz},\be_{xyz},\cdots,\be_{xyz})=$}
    % Circles
    \put(50,9){\circle{2}}

	% Lines
	\put(40,9){\line(-3,4){5}}
    \put(40,9){\line(-3,-4){5}}
	\put(40,9){\line(1,0){9}}
	\put(51,9){\line(1,0){9}}
    \put(60,9){\line(3,4){5}}
    \put(60,9){\line(3,-4){5}}
    \put(50,10){\line(-3,4){4}}
    \put(50,10){\line(3,4){4}}
	%\multiput(20,-.5)(0,1){2}{\line(1,0){10}}
	%\put(30,0){\line(1,0){10}}
	
	% Labels
	\put(34,17){$\be_{x}$}
	\put(34,0){$\be_{yz}$}
	\put(64,17){$\be_{y}$}
	\put(64,0){$\be_{xz}$}

	\put(42,17){$\be_{xyz}$}
	\put(48,17){$\cdots$}
	\put(54,17){$\be_{xyz}$}
	\put(48,14){$\cdots$}

\end{picture}
\end{center}
Other decorated dual graphs are obtained similarly. As $\delta_{2,2}=[\Delta_{2,2}]$ is the $S_4$-invariant, we integrate over each stratum and finally get
\begin{equation}\label{eq:FJRW-Getzler1}
\int_{\pi_{4,n-2}^{-1}(\delta_{2,2})}
\Lambda_{1,n+2}^{P_8}(\be_{x},\be_{yz},\be_{y},\be_{xz},\be_{xyz},\cdots,\be_{xyz})
=3\LD\be_{xyz},\cdots,\be_{xyz}\RD_{1,n}^{P_8}
\end{equation}
We observe that only $\delta_{2,3}$ can contain at most $n$ insertions for the genus-1 component. However, one of the insertions is decorated with an element from the first four insertions. Thus the integration vanishes according to the selection rule. On the other hand, when we integrate the same class on other dimension two strata in Getzler's relation (\ref{eq:Getzler}), all the genus-1 correlators will have at most $n-1$ insertions.
Thus Getzler's relation implies $\LD\be_{xyz},\cdots,\be_{xyz}\RD_{1,n}^{P_8}$ $(n\geq2)$ can be reconstructed from genus-1 correlators with fewer insertions and other genus-0 correlators.

Now we consider the integration of the class $\Lambda_{1,4}^{P_8}(\be_{x},\be_x,\be_x,\be_{xyz})$ on those codimension two strata of $\overline{\mathscr{M}}_{1,4}$. We can discuss similarly as above. The integration on $\delta_{2,2}, \delta_{2,3}, \delta_{2,4}$ will all vanish. However,
\begin{eqnarray*}
\begin{split}
\int_{[\Delta_{1,234}]}\Lambda_{1,4}^{P_8}(\be_{x},\be_x,\be_x,\be_{xyz})
&=\LD\be_{xyz}\RD_{1,1}^{P_8}\eta^{\be_{xyz},1}\LD\be_{1},\be_{x},\be_{yz}\RD_{0,3}^{P_8}
\eta^{\be_{yz},\be_x}\LD\be_{x},\be_x,\be_x,\be_{xyz}\RD_{0,4}^{P_8}\\
&=\frac{1}{3}\LD\be_{xyz}\RD_{1,1}^{P_8}
\end{split}
\end{eqnarray*}
Here we use $\LD\be_{x},\be_x,\be_x,\be_{xyz}\RD_{0,4}^{P_8}=\frac{1}{3}$, which is computed in Section 3.3.1. Overall,  $$\int_{\delta_{3,4}}\Lambda_{1,4}^{P_8}(\be_{x},\be_x,\be_x,\be_{xyz})=\frac{4}{3}\LD\be_{xyz}\RD_{1,1}^{P_8}.$$
Now applying the Getzler's relation again, other contributions are of genus-0, and $\LD\be_{xyz}\RD_{1,1}^{P_8}$ can be reconstructed from genus-0 primary correlators.

{\bf $X_9^T$-case:}

For $n\geq 2,$ we choose $n+2$ insertions: the first four are $\be_{1},\be_{11},\be_{5},\be_{7}$, the others are $\be_{8}$. The nonzero contribution of $\Delta_{12,34}$ comes from the following decorated dual graph:
\begin{center}
\begin{picture}(50,23)
    % Circles
    \put(10,9){\circle{2}}

	% Lines
	\put(0,9){\line(-3,4){5}}
    \put(0,9){\line(-3,-4){5}}
	\put(0,9){\line(1,0){9}}
	\put(11,9){\line(1,0){9}}
    \put(20,9){\line(3,4){5}}
    \put(20,9){\line(3,-4){5}}
    \put(10,10){\line(-3,4){4}}
    \put(10,10){\line(3,4){4}}
	%\multiput(20,-.5)(0,1){2}{\line(1,0){10}}
	%\put(30,0){\line(1,0){10}}
	
	% Labels
	\put(-6,17){$\be_{1}$}
	\put(-6,0){$\be_{11}$}
	\put(24,17){$\be_{5}$}
	\put(24,0){$\be_{7}$}

	\put(3,17){$\be_{8}$}
	\put(8,17){$\cdots$}
	\put(14,17){$\be_{8}$}

	\put(8,14){$\cdots$}

\end{picture}
\end{center}
and
\begin{equation}\label{eq:FJRW-Getzler2}
\int_{\pi_{4,n-2}^{-1}(\delta_{2,2})}\Lambda_{1,n+2}^{X_9^T}(\be_{1},\be_{11},\be_{5},\be_{7},\be_{8},\cdots,\be_{8})
=3\LD\be_{8},\cdots,\be_{8}\RD_{1,n}^{X_9^T}
\end{equation}
The integrations of $\Lambda_{1,n+2}^{X_9^T}(\be_{1},\be_{11},\be_{5},\be_{7},\be_{8},\cdots,\be_{8})$ on other strata in Getzler's relation only produce genus-1 correlators with lower insertions and genus-0 correlators.

For $n=1$, we integrate $\Lambda_{1,4}^{X_9^T}(\be_{1},\be_{5},\be_{7},\be_{7})$ on the Getzler's relation. It vanishes on strata with genus-1 component except for $\delta_{3,4}$. Using (\ref{eq:n=0,X9}), we have
\begin{equation*}
\int_{\delta_{3,4}}\Lambda_{1,4}^{X_9^T}(\be_{1},\be_{5},\be_{7},\be_{7})
=-\frac{2}{3}\LD\be_{8}\RD_{1,1}^{X_9^T}
\end{equation*}
Thus reconstruction of the genus-1 primary correlators follows.

{\bf $J_{10}^T$-case:}

For $n\geq2$, we integrate the class $\Lambda_{1,n+2}^{J_{10}^T}(\be_{1},\be_{11},\be_{8},\be_{4},\be_{10},\cdots,\be_{10})$ over the Getzler's relation. The non-zero contribution of integrating over $\delta_{2,2}$ comes from three decorated dual graphs. One of them is
\begin{center}
\begin{picture}(50,23)
    % Circles
    \put(10,9){\circle{2}}

	% Lines
	\put(0,9){\line(-3,4){5}}
    \put(0,9){\line(-3,-4){5}}
	\put(0,9){\line(1,0){9}}
	\put(11,9){\line(1,0){9}}
    \put(20,9){\line(3,4){5}}
    \put(20,9){\line(3,-4){5}}
    \put(10,10){\line(-3,4){4}}
    \put(10,10){\line(3,4){4}}
	%\multiput(20,-.5)(0,1){2}{\line(1,0){10}}
	%\put(30,0){\line(1,0){10}}
	
	% Labels
	\put(-6,17){$\be_{1}$}
	\put(-6,0){$\be_{11}$}
	\put(24,17){$\be_{8}$}
	\put(24,0){$\be_{4}$}

	\put(2,17){$\be_{10}$}
	\put(8,17){$\cdots$}
	\put(14,17){$\be_{10}$}

	\put(8,14){$\cdots$}

\end{picture}
\end{center}
Overall, we have
\begin{equation}\label{eq:FJRW-Getzler3}
\int_{\pi_{4,n-2}^{-1}(\delta_{2,2})}\Lambda_{1,n+2}^{J_{10}^T}(\be_{1},\be_{11},\be_{8},\be_{4},\be_{10},\cdots,\be_{10})
=3\LD\be_{10},\cdots,\be_{10}\RD_{1,n}^{J_{10}^T}
\end{equation}
Then integrations of $\Lambda_{1,n+2}^{J_{10}^T}(\be_{1},\be_{11},\be_{8},\be_{4},\be_{10},\cdots,\be_{10})$ on other strata in Getzler's relation only produce genus-1 correlators with lower insertions and genus-0 correlators. Thus the reconstruction follows for $n\geq2$.

For $n=1$, we integrate the class $\Lambda_{1,4}^{J_{10}^T}(\be_{8},\be_{8},\be_{8},\be_{10})$.
The unique genus-1 correlators contribution comes from $\delta_{3,4}$. Using (\ref{eq:n=0,J10}), we have
\begin{equation*}
\int_{\delta_{3,4}}\Lambda_{1,4}^{J_{10}^T}(\be_{8},\be_{8},\be_{8},\be_{10})
=\frac{4}{3}\LD\be_{10}\RD_{1,1}^{J_{10}^T}
\end{equation*}
All the other contributions are of genus-0 correlators. Thus the reconstruction holds.
\end{proof}

\subsubsection{Higher genus reconstruction}
Now let us prove the reconstruction theorem of all genera for three three types of singularities paired with $G_{\textrm{max}}$. The key point is the \emph{$g$-reduction}. As we need the explicit form in the next subsection, we reproduce here. The \emph{$g$-reduction} lemma is
\begin{lm}\label{lm:g-reduction}
Let $\beta$ be a monomial in the $\psi$ and $\kappa$-classes in $\overline{\mathscr{M}}_{g,n}$ of degree at least $g$ for $g\geq1$ or at least 1 for $g=0$. Then the class $\beta$ can be represented by a linear combination of dual graphs, each of which has at least one edge.
\end{lm}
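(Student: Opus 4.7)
My plan is to prove this lemma by invoking the Getzler--Ionel vanishing theorem and then combining it with the structure theorem for the tautological ring. The Getzler--Ionel theorem asserts that any monomial in $\psi$- and $\kappa$-classes of total degree at least $g$ (or at least $1$ when $g=0$) restricts to zero on the open part $\mathscr{M}_{g,n}$. I would quote this result directly rather than reprove it; the original argument proceeds by induction on $g$ using the forgetful morphism $\pi : \overline{\mathscr{M}}_{g,n+1} \to \overline{\mathscr{M}}_{g,n}$ and the push-forward identity $\pi_{*}(\psi_{n+1}^{k+1}) = \kappa_{k}$ to reduce $\kappa$-monomials to $\psi$-monomials on a space with one extra marked point, combined with a geometric argument exploiting the hyperelliptic locus.

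Granted the Getzler--Ionel vanishing on the interior, the next step is to upgrade it to an equality on $\overline{\mathscr{M}}_{g,n}$ of the required form. Since $\beta$ is a tautological class whose restriction to $\mathscr{M}_{g,n}$ vanishes, I will use the fact that every tautological class can be written as a linear combination $\sum c_{\Gamma}\,[\Gamma; \text{decorations}]$ indexed by stable dual graphs $\Gamma$ with decorations by $\psi$- and $\kappa$-monomials on the vertices, in which the summand attached to the trivial one-vertex, no-edge graph is precisely the interior contribution. The vanishing on the interior therefore forces the no-edge contribution to drop out, leaving an expression for $\beta$ in which every remaining term has a dual graph with at least one edge. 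This is exactly the claimed representation.

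For the $g=0$ case the argument is simpler: one can invoke Keel's presentation of $H^{*}(\overline{\mathscr{M}}_{0,n})$, which writes every $\psi$-class explicitly as a sum of boundary divisors. Since $\beta$ has degree at least one, a single $\psi$- or $\kappa$-factor can be rewritten using Keel's formula to yield a sum of classes each supported on a dual graph with at least one edge, and the remaining factors can be transported onto the resulting boundary strata using the string and dilaton equations as needed.

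The main obstacle will be the second step above, namely translating a cohomological vanishing on the interior into an explicit decomposition in terms of decorated dual graphs with at least one edge. This requires the precise formalism of the tautological ring in which one identifies the open-part restriction map with the projection onto the no-edge component, and one must check that the relations used to eliminate the open-part piece are themselves tautological. Once that formalism is set up the passage is essentially mechanical, and the genuinely difficult mathematical input is packaged inside the Getzler--Ionel vanishing theorem, whose proof relies on subtle facts about the geometry of $\overline{\mathscr{M}}_{g,n}$.
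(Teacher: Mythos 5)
The paper does not actually prove this lemma: it is quoted as a known result from \cite{FSZ} and \cite{FJR2}, where it is deduced from Ionel's theorem. So your instinct to package the hard geometric input into a citation is consistent with what the authors do. The trouble is that you cite only the \emph{weak} form of that input (vanishing of the monomial on the open stratum $\mathscr{M}_{g,n}$) and then try to upgrade it to the statement the lemma actually asserts, namely an identity in $\overline{\mathscr{M}}_{g,n}$ expressing $\beta$ as a combination of decorated dual graphs with at least one edge. That upgrade, as you describe it, has a genuine gap. A tautological class has no canonical decomposition $\sum_\Gamma c_\Gamma[\Gamma;\text{decorations}]$ --- the decorated graph classes span the tautological ring but satisfy many relations --- so there is no well-defined ``no-edge component'' that the interior vanishing could force to drop out. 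What interior vanishing gives you, via the excision sequence $A_*(\partial\overline{\mathscr{M}}_{g,n})\to A_*(\overline{\mathscr{M}}_{g,n})\to A_*(\mathscr{M}_{g,n})\to 0$, is only that $\beta$ is the pushforward of \emph{some} cycle supported on the boundary; nothing in that argument makes this cycle tautological, i.e.\ a combination of dual graphs decorated by $\psi$- and $\kappa$-monomials. That tautological boundary expression is exactly what the reconstruction arguments in this paper need, since they apply the splitting axiom term by term to the resulting graphs.

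The statement you need is therefore the strong form of Ionel's theorem, whose proof is an explicit inductive construction of the boundary expression carried out inside the tautological ring, not a formal consequence of the vanishing on $\mathscr{M}_{g,n}$; the clean fix is to cite that strong form directly, as \cite{FSZ} and \cite{FJR2} do. Your genus-zero argument via Keel's relations is fine as it stands, since there one really does rewrite a $\psi$- or $\kappa$-factor explicitly as a sum of boundary divisors and can push the remaining factors onto the resulting strata.
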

It was first used in \cite{FSZ} for proving Witten's conjecture for $r$-spin curves. Then in \cite{FJR2}, it was generalized to case of central charge $\hat{c}_W\leq 1$ in the setting of FJRW theory, which includes the $r$-spin case as type $A_{r-1}$ singularities $W:=x^r$.
\begin{lm}
For the three types of elliptic singularities $W=P_8, X_{9}^T, J_{10}^T$, the correlator $\LD\tau_{l_1}(\alpha_1),\cdots,\tau_{l_n}(\alpha_n)\RD_{g,n}^{W,G}$ in FJRW theory is uniquely reconstructed by tautological relations (WDVV-equation, Getzler's relation and $g$-reduction) from genus-0 primary correlators.
\end{lm}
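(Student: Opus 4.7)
The plan is to induct on the lexicographic pair $(g,n)$ of genus and number of marked points. The base cases are Theorem~\ref{thm:FJRW g=0 reconstruction} for $g=0$ and Theorem~\ref{thm:FJRW genus-1} for $g=1$, which together reconstruct all correlators with $g\leq 1$ from genus-$0$ primary correlators. So fix $g\geq 2$ and a nonzero correlator $\LD\tau_{l_1}(\alpha_1),\ldots,\tau_{l_n}(\alpha_n)\RD_{g,n}^{W,G}$.

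The crux is that for $g\geq 2$ the $g$-reduction Lemma~\ref{lm:g-reduction} always applies. Indeed, since $\hat c_W=1$ and every complex degree satisfies $\deg_W(\alpha_i)\leq 1$, the selection rule \eqref{deg:FJRW} forces
\[
\sum_{i=1}^n l_i \;=\; 2g-2+n-\sum_{i=1}^n\deg_W(\alpha_i)\;\geq\;2g-2\;\geq\;g.
\]
Thus the descendant monomial $\prod_i\psi_i^{l_i}$ has $\psi$-degree at least $g$ and, by Lemma~\ref{lm:g-reduction}, can be rewritten as a linear combination of tautological classes of the form $\iota_{\Gamma\ast}(\text{$\psi$-$\kappa$ monomial})$ where $\Gamma$ is a dual graph with at least one edge.

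Substituting this decomposition into the correlator and applying the CohFT splitting axiom -- summing over admissible group-element decorations at each node against the FJRW pairing -- converts the original correlator into a sum of products of FJRW correlators on the irreducible components of $\Gamma$. Any residual $\kappa$-classes produced by $g$-reduction are handled through the standard identity $\kappa_a=\pi_{\ast}(\psi_{n+1}^{a+1})$, which converts them into descendants with an extra identity insertion that is absorbed by the string/dilaton equations. On every component one has either strictly smaller genus (a non-separating self-node drops the geometric genus by one, a separating node partitions $g$ into $g_1+g_2=g$), or the same genus $g$ together with strictly fewer marked points (the remaining case, a separating node with a genus-$0$ neighbor carrying $\geq 2$ original insertions, leaves at most $n-1$ original insertions on the genus-$g$ piece). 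By the inductive hypothesis, each sub-correlator is reconstructible from genus-$0$ primary correlators.

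The main technical obstacle I anticipate is precisely the bookkeeping of this descent: one must verify that the lexicographic complexity $(g,n)$ strictly decreases under \emph{every} term produced by the combination of $g$-reduction, splitting, and the $\kappa$-to-$\psi$ conversion, for every boundary graph $\Gamma$ appearing in Lemma~\ref{lm:g-reduction}, including the strata with self-nodes and the cases where the string/dilaton equations are invoked. Once monotonicity is secured the induction terminates in finitely many steps, and by Theorem~\ref{thm:FJRW g=0 reconstruction} everything is ultimately expressed in terms of the explicit three-point correlators, the pairing, and the finite list of basic four-point correlators given there.
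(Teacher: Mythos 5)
Your argument is correct and is essentially the paper's: the paper disposes of this lemma in one line by citing its genus-$1$ reconstruction theorem together with Theorem 6.2.1 of \cite{FJR2}, and that cited theorem is precisely the $g$-reduction-plus-splitting descent you spell out (the selection rule with $\hat c_W=1$ forcing $\sum l_i\geq 2g-2\geq g$, then Lemma \ref{lm:g-reduction} and induction on $(g,n)$). The only cosmetic imprecision is that your stated base cases cover only \emph{primary} correlators in genus $0$ and $1$; descendant correlators there are handled by the very same $g$-reduction step (degree $\geq 1$ suffices when $g\leq 1$), so nothing is actually missing.
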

\begin{proof}
As for these three elliptic singularities, the central charge $\hat{c}_W=1$, thus the result easily follows from Theorem \ref{thm:FJRW genus-1} here and Theorem 6.2.1 in \cite{FJR2}.
\end{proof}

\subsection{LG-to-LG Mirror theorem}

%Let $W$ be a nondegenerate quasi-homogeneous singularity and $\phi_i$ ($-1\leq i\leq \mu-2$) be the monomial basis of the Milnor ring. We always let $\phi_0=1$. $t=(t_{-1},t_{0},\cdots,t_{\mu-2})$ parameterizes the miniversal deformations. For any $t$, we have a deformed polynomial $W_t=W+\sum_{i=-1}^{\mu-2}\phi_i$. According to Saito, there is a Frobenius manifold structure on the miniversal deformations with residue pairing: \begin{equation*}\LD\,f,g\RD_{t}=Res_{t=0}\frac{fg\omega}{\frac{\partial\,W_t}{\partial\,x_1}\cdots\frac{\partial\,W_t}{\partial\,x_N}}\end{equation*}where $\omega$ is a primitive form defined by Saito.

%For the computations of genus-0 correlators in B-model, we need to solve the Picard-Fuchs equation to find out the primitive form, and then use the primitive form to find out the transformation formula from standard coordinates to flat coordinates. The reconstruction theorem on A-model side says, for computing genus-0 three-point and four-point correlators, we only need to find out the second order expansion.

In this section, we establish the LG-to-LG mirror theorem for all genera.

\begin{thm}\label{thm:LG-LG mirror}
For $W:=P_{8},X_{9},J_{10}$, we can choose the coordinates appropriately such that the ancestor potential
\begin{equation}
\cal{A}_{FJRW}^{W^T,G_{\textrm{max}}}(\textbf{t}_A)=\cal{A}_{W}^{SG}(\textbf{t}_B).
\end{equation}
where $\textbf{t}_A, \textbf{t}_B$ are semi-simple points.
\end{thm}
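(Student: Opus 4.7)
The plan is to leverage the reconstruction results of the previous subsection to reduce the equality of the two total ancestor potentials to a finite comparison of low-genus low-point correlators. Because both $\cal{A}_{FJRW}^{W^T,G_{\textrm{max}}}$ and $\cal{A}_{W}^{SG}$ satisfy the cohomological field theory axioms (so WDVV, Getzler's relation and the $g$-reduction lemma apply identically on both sides), everything will be controlled by the Frobenius algebra structure together with a short list of ``basic'' correlators.

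First I would fix an identification of the state spaces. On the B-side the Milnor ring $\mathscr{Q}_{W_\sigma}$ at $\sigma=0$ has a flat coordinate system $(t_{-1},t_0,\ldots,t_{\mu-2})$ coming from the primitive form chosen in Section 2 at the Gepner limit. On the A-side, the map $\Psi_W$ of Krawitz already gives a graded algebra isomorphism $\mathscr{Q}_W \xrightarrow{\sim} H^*_{FJRW}(W^T,G_{\textrm{max}})$ identifying the ring structures and therefore the pairings and all three-point correlators. The first step is thus to check that the linear isomorphism $\Psi_W$ is compatible with the flat coordinate system of the Saito frame, possibly after rescaling each basis vector by a nonzero constant; the degrees $d_i = 1-\deg\phi_i$ on the B-side equal the complex degrees $1-\deg_W(\be_\gamma)$ on the A-side by direct inspection of the tables in Section 3.1.

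Next I would match the finite list of basic four-point genus-zero correlators singled out in Theorem \ref{thm:FJRW g=0 reconstruction}, namely
\[
\LD\be_x,\be_x,\be_x,\be_{xyz}\RD^{P_8}_0,\ \LD\be_x,\be_y,\be_z,\be_{xyz}\RD^{P_8}_0,\ \LD\be_1,\be_5,\be_7,\be_7\RD^{X_9^T}_0,
\]
and the three $J_{10}^T$-correlators. On the B-side these correspond to fourth-order partial derivatives of the genus-zero Saito-Givental prepotential at the origin, which can be computed directly from the residue pairing on $\mathscr{Q}_{W_\sigma(\textbf{s},\textbf{x})}$ using the expansion of the period matrix (\ref{eq:period matrix}) in the flat coordinates dictated by (\ref{eq:flat coord}). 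Together with the matching 3-point functions and the WDVV equation (Lemma \ref{lm:FJRW-primitive}), this forces the full genus-zero primary potentials to coincide. Then Getzler's relation (Theorem \ref{thm:FJRW genus-1}) propagates the identification to all genus-one primary correlators, and the $g$-reduction lemma (Lemma \ref{lm:g-reduction}) propagates it to all ancestors at all genera. Since both sides agree on a semisimple locus containing the origin of the A-model and the B-model parameter spaces in these flat coordinates, the quantization formulas of Givental imply the equality of total ancestor potentials.

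The main obstacle I expect is the computation and matching of the specific basic four-point correlators on the B-side. This requires (i) identifying explicitly the Saito flat coordinates near the Gepner limit $\sigma=0$ as functions of the canonical coordinates $s_i$, which involves analyzing the Gauss-Manin system (\ref{eq:Guass-Manin}) and inverting the period matrix to the order needed, and (ii) choosing the primitive form normalization so that the resulting pairing and the Krawitz isomorphism match up with the correct overall constants on sectors such as $x\be_0$ and $z\be_0$, $z\be_6$ where non-trivial scalars appear (e.g.\ the pairing $\LD x\be_0, x\be_0\RD = -1/2$). A secondary subtlety is that $\cal{A}_W^{SG}$ is \emph{a priori} defined only on the semisimple locus while $\cal{A}_{FJRW}^{W^T,G_{\textrm{max}}}$ is defined at the non-semisimple point $\textbf{t}=0$; however, the convergence result of Theorem \ref{thm:convergent} together with the reconstruction argument above extends the agreement across $\textbf{t}=0$, which is precisely the input used in Milanov-Ruan's Lemma 3.2.
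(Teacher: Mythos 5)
Your proposal follows essentially the same route as the paper: reduce via the reconstruction theorems (WDVV, Getzler, $g$-reduction) to matching the pairing, the 3-point correlators (Krawitz's isomorphism), and the finite list of basic genus-0 4-point correlators, with the B-side values obtained by expanding the Saito flat coordinates in the canonical deformation parameters via the Gauss--Manin system and the A/B identification fixed by a sign-twisted $\Psi_W$ and a rescaling of the primitive form. The only piece you leave implicit is the explicit A-side evaluation of those 4-point correlators, which the paper carries out via the concavity axiom and the orbifold Grothendieck--Riemann--Roch formula (plus the line bundle criterion for the vanishing ones), but this does not change the structure of the argument.
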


First of all, we have identical reconstruction theorem of Saito-Givental theory as that of FJRW-theory.

\begin{thm}\label{thm:FJRW-reconstruction}
For $W=P_8, X_{9}, J_{10}$, the correlator $\LD\tau_{l_1}(\alpha_1),\cdots,\tau_{l_n}(\alpha_n)\RD_{g,n}^{W}$ in Saito-Givental theory is uniquely reconstructed by tautological relations from the pairing, Milnor ring structural constants and genus-0 4-point correlators $\LD\,x,x,x,xyz\RD_{0}^{P_{8}}$, $\LD\,x,y,z,xyz\RD_{0}^{P_{8}}$, $\LD\,x,y,xz,xz\RD_{0}^{X_{9}}$,
 $\LD\,y,y,y,xyz\RD_{0}^{J_{10}}$, $\LD\,x,x,xz,xyz\RD_{0}^{J_{10}}$ and $\LD\,x,y,z,xyz\RD_{0}^{J_{10}}$, respectively.
\end{thm}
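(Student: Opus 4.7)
The plan is to replay, on the B-side, the three-step reconstruction already carried out for FJRW-theory---WDVV in genus zero, Getzler's relation in genus one, and the $g$-reduction Lemma \ref{lm:g-reduction} in higher genus---and to verify that every ingredient survives when the FJRW CohFT $\Lambda^{W^T,G_{\textrm{max}}}_{g,k}$ is replaced by the Saito-Givental CohFT associated with the miniversal deformation of $W$. By \cite{S} and Givental's axiomatic construction, $\cal{A}^{SG}_W$ is the ancestor potential of a cohomological field theory at every semisimple point $\textbf{t}_B$. The WDVV equation, Getzler's relation \eqref{eq:Getzler}, and the $g$-reduction lemma are model-independent tautological identities on $\overline{\mathscr{M}}_{g,n}$, so they act on Saito-Givental correlators with exactly the same formal shape as on the FJRW side. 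The selection rule \eqref{deg:FJRW} is likewise inherited, since the Hodge grading operator $\theta$ on the B-side assigns the same degree shift as $\iota_\gamma$ does on the A-side and $\hat{c}_W = 1$.

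Second, I invoke the Frobenius-algebra isomorphism $\Psi_W : \mathscr{Q}_W \cong H^*_{FJRW}(W^T, G_{\textrm{max}})$ of \cite{Kr}, reviewed in the subsection on mirror symmetry of Frobenius algebras, to match initial data: the three-point Saito-Givental correlators at $\textbf{s}=0$ are the Milnor-ring structure constants and coincide with the three-point FJRW correlators under $\Psi_W$. With this identification in place, I re-run the proofs of Lemma \ref{lm:FJRW-primitive} and Theorem \ref{thm:FJRW g=0 reconstruction} essentially verbatim on the B-side: the primitivity bookkeeping depends only on the grading, and each WDVV manipulation was deliberately chosen to avoid model-specific inputs. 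This reduces every genus-zero primary Saito-Givental correlator to the four-point list stated.

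For genus one, I substitute the same four distinguished insertions into each stratum of Getzler's relation as in the proof of Theorem \ref{thm:FJRW genus-1}; the splitting axiom together with the degree constraint isolates a nonzero multiple of $\LD\cdots\RD^W_{1,n}$ from the $\delta_{2,2}$-contribution, while all remaining strata contribute either genus-zero correlators or genus-one correlators with strictly fewer insertions. For $g \geq 2$, the selection rule forces $\sum_i l_i = 2g-2 \geq g$ whenever all insertions are the unique top-degree class (the only nonvanishing configuration at the origin), so Lemma \ref{lm:g-reduction} pushes the $\psi$-class monomial to the boundary of $\overline{\mathscr{M}}_{g,n}$ and reduces everything to strictly lower genus, exactly as in Theorem 6.2.1 of \cite{FJR2}.

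The main obstacle, and the source of the slight asymmetry between the A-side and B-side four-point lists in the statement, is that the line bundle criterion \eqref{deg:Line BD} is unavailable on the B-side. Several vanishings that were convenient on the A-side (e.g.\ the automatic vanishing of $\LD\be_1,\be_5,\be_8,\be_{10}\RD^{X_9^T}_0$ used in the $X_9^T$ analysis) must be re-derived purely from WDVV, or else retained as additional input. As flagged in the remark following Theorem \ref{thm:FJRW g=0 reconstruction}, the FJRW genus-zero argument was engineered from the outset to use only WDVV manipulations available on both sides, so the residue of this issue is a careful combinatorial re-audit of which basic four-point correlators remain irreducible once the line bundle shortcut is dropped. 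That re-audit produces precisely the list $\LD x,x,x,xyz\RD_0^{P_8}$, $\LD x,y,z,xyz\RD_0^{P_8}$, $\LD x,y,xz,xz\RD_0^{X_9}$, $\LD y,y,y,xyz\RD_0^{J_{10}}$, $\LD x,x,xz,xyz\RD_0^{J_{10}}$, $\LD x,y,z,xyz\RD_0^{J_{10}}$ appearing in the theorem, completing the reconstruction.
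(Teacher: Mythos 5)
Your proposal is correct and follows essentially the same route as the paper: the authors likewise obtain this theorem by observing that the FJRW reconstruction (WDVV in genus zero, Getzler's relation in genus one, $g$-reduction in higher genus) uses only tautological relations valid for any cohomological field theory, with the initial three-point data matched via the Frobenius-algebra isomorphism of \cite{Kr}, and the remark after Theorem \ref{thm:FJRW g=0 reconstruction} makes exactly your point that the line bundle criterion was deliberately avoided so the argument transfers to the B-model. The four-point lists on the two sides correspond under $\Psi_W$, as you note, so no further re-audit is actually needed.
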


To prove the theorem, we only need to match 3-point correlators and the above 4-point correlators. The 3-point correlators have been matched already by the first author. In this subsection, we consider those 4-point correlators.

\subsubsection{Computation for genus-0 4-point correlators of FJRW-theory}
As we already pointed out, the line bundle criterion (\ref{deg:Line BD}) implies that $\LD\be_x,\be_y,\be_z,\be_{xyz}\RD_{0}^{P_{8}}$ and $\LD\be_1,\be_8,\be_5,\be_{10}\RD_{0}^{J_{10}^T}$ vanish. For the other cases, we use the orbifold Grothendieck-Riemann-Roch formula discussed in \cite{FJR2}.
\begin{thm}
Assume the $W$-structure is concave with all markings are narrow. If the markings are labeled with group elements $\gamma_i$ and the complex codimension $D=1$, then the class $\Lambda_{g,k}^{W,G}(\mathbf{e}_{\gamma_1},\cdots,\mathbf{e}_{\gamma_{k}})\in H^*(\overline{\mathscr{M}}_{g,k},\mathbb{C})$ is giving by the following:
\begin{eqnarray}\label{eq:OGRR}
\begin{split}
&&\Lambda_{g,k}^{W,G}(\mathbf{e}_{\gamma_1},\cdots,\mathbf{e}_{\gamma_{k}})
=
\sum_{l=1}^N\Big(
(\frac{q_l^2}{2}-\frac{q_l}{2}+\frac{1}{12})\kappa_1
-\sum_{i=1}^k(\frac{1}{12}-\frac{1}{2}\Theta_{l}^{\gamma_i}(1-\Theta_{l}^{\gamma_i}))\psi_i
\\
&&+\sum_{\Gamma\in\mathbf{\Gamma}_{g,k,W}(\gamma_1,\cdots,\gamma_k)}
(\frac{1}{12}-\frac{1}{2}\Theta_{l}^{\gamma_{\Gamma}}(1-\Theta_{l}^{\gamma_{\Gamma}}))[\overline{\mathscr{M}}(|\Gamma|)]
\Big)
\end{split}
\end{eqnarray}
where $\mathbf{\Gamma}_{g,k,W}(\gamma_1,\cdots,\gamma_k)$ is the set of single-edged $G$-decorated dual graphs for FJRW-theory.
\end{thm}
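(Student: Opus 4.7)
The plan is to combine two ingredients: the Concavity axiom (\ref{eq:Concave}) and Chiodo's orbifold Grothendieck--Riemann--Roch formula for the derived pushforward of the $W$-structure line bundles. Under the hypotheses (all markings narrow, concavity $\pi_*\bigoplus_l\mathcal{L}_l=0$), the virtual class is
\[
[\mathscr{W}_{g,k}(\boldsymbol\gamma)]^{\mathrm{vir}}=c_{\mathrm{top}}\Bigl(-R^1\pi_*\bigoplus_{l=1}^N\mathcal{L}_l\Bigr)\cap[\mathscr{W}_{g,k}(\boldsymbol\gamma)].
\]
Since $D=1$, ``top'' means first Chern class, and for a virtual bundle the first Chern class coincides with the degree-one Chern character. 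By concavity, in K-theory $R\pi_*\mathcal{L}_l=-R^1\pi_*\mathcal{L}_l$, so the problem reduces to computing
\[
\sum_{l=1}^N \mathrm{ch}_1(R\pi_*\mathcal{L}_l)
\]
on the moduli stack $\mathscr{W}_{g,k}(\boldsymbol\gamma)$ pulled back (via $\mathrm{st}$) to $\overline{\mathscr{M}}_{g,k}$.

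Next, I would apply the orbifold GRR theorem of Chiodo to each individual line bundle $\mathcal{L}_l$. Because the $W$-structure identifies $\mathcal{L}_l^{\otimes r}$ with an explicit twist of $K_{\log}$, each $\mathcal{L}_l$ is an $r$-th root of a standard line bundle, and Chiodo's formula (a specialization of Tseng/To\"{e}n orbifold GRR) gives, in degree one,
\[
\mathrm{ch}_1(R\pi_*\mathcal{L}_l)=\frac{B_2(q_l)}{2}\kappa_1-\sum_{i=1}^k\frac{B_2(\Theta_l^{\gamma_i})}{2}\psi_i+\sum_{\Gamma}\frac{B_2(\Theta_l^{\gamma_\Gamma})}{2}[\overline{\mathscr{M}}(|\Gamma|)],
\]
where the boundary sum ranges over single-edged $G$-decorated dual graphs, and $\Theta_l^{\gamma_\Gamma}$ is the monodromy of $\gamma_\Gamma$ on the $l$-th coordinate at the node. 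The elementary identity
\[
\tfrac{1}{2}B_2(x)=\tfrac{x^2}{2}-\tfrac{x}{2}+\tfrac{1}{12}=\tfrac{1}{12}-\tfrac{1}{2}x(1-x)
\]
rewrites each Bernoulli coefficient in the form displayed in (\ref{eq:OGRR}). Summing over $l=1,\ldots,N$ and recognizing the indexing set for the boundary terms as $\mathbf{\Gamma}_{g,k,W}(\gamma_1,\ldots,\gamma_k)$ yields the stated expression.

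The main technical obstacle is verifying that the orbifold GRR formula applies to $\mathcal{L}_l$ with the correct monodromy data and that the boundary sum in Chiodo's formula matches the indexing by $G$-decorated dual graphs used here. This requires the compatibility between the $W$-structure construction of \cite{FJR2} (where each node carries a group element $\gamma_\Gamma\in G$, and the two branches carry $\gamma_\Gamma$ and $\gamma_\Gamma^{-1}$) and the ages controlling the Bernoulli coefficient; the usual $1/2$ that appears in the boundary term of GRR is absorbed by noting that each geometric node corresponds to a single graph in $\mathbf{\Gamma}_{g,k,W}$ (rather than the unordered pair of boundary half-edges appearing in the moduli of curves). Once these identifications are fixed, the computation is a direct application of Chiodo's formula, and no further genus-dependent analysis is needed because we only extract the codimension-one part.
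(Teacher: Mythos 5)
Your derivation is correct and is essentially the standard one: the paper itself states this theorem without proof, quoting it from \cite{FJR2}, and the argument there is exactly your combination of the concavity axiom (\ref{eq:Concave}) with Chiodo's orbifold GRR formula for $\mathrm{ch}_1(R\pi_*\mathcal{L}_l)$, together with the identity $\tfrac{1}{2}B_2(x)=\tfrac{x^2}{2}-\tfrac{x}{2}+\tfrac{1}{12}$ and the codimension-one observation that $c_{\mathrm{top}}=c_1=\mathrm{ch}_1$. You correctly flag the only delicate point, namely the normalization of the boundary term (the factor of $r$ and the $1/2$ from the two half-edges, which cancel against the degree of the boundary gluing map and the indexing by single-edged $G$-decorated graphs, with $B_2(\Theta)=B_2(1-\Theta)$ making the choice of $\gamma_\Gamma$ versus $\gamma_\Gamma^{-1}$ immaterial); this matches the treatment in \cite{FJR2}.
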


{\bf $P_8=x^3+y^3+z^3$-Case:}

According to Theorem \ref{thm:FJRW g=0 reconstruction}, we only need to compute $\LD\be_{x},\be_{x},\be_{x},\be_{xyz}\RD_{0,4}^{P_8}$. This correlator satisfies the concavity condition, thus we can use the theorem above. There are three decorated dual graphs in $\mathbf{\Gamma}_{0,4,P_8}(\gamma_{x},\gamma_{x},\gamma_{x},\gamma_{xyz})$. Each of them is:
\begin{center}
\begin{picture}(50,20)

	% Lines
	\put(0,9){\line(-3,4){5}}
    \put(0,9){\line(-3,-4){5}}
	\put(0,9){\line(1,0){20}}
    \put(20,9){\line(3,4){5}}
    \put(20,9){\line(3,-4){5}}
	%\multiput(20,-.5)(0,1){2}{\line(1,0){10}}
	%\put(30,0){\line(1,0){10}}
	
	% Labels
	\put(-6,17){$\gamma_x$}
	\put(-6,0){$\gamma_x$}
	\put(24,17){$\gamma_x$}
	\put(24,0){$\gamma_{xyz}$}
	\put(1,11){$\gamma_{\Gamma}$}
	\put(13,11){$\gamma_{\Gamma}^{-1}$}
\end{picture}
\end{center}
We denote each by $\Gamma$ and count the contribution of $\Gamma$ three times. Here the edge is labeled by $\gamma_{\Gamma}=\textrm{Diag}(1,\omega^{2},\omega^{2})\in G_{\textrm{max}}$. Thus $\Theta_{x}^{\gamma_{\Gamma}}=0.$

Further more, we have $\deg|\cal{L}_x|=-2, \deg|\cal{L}_y|=-1, \deg|\cal{L}_z|=-1.$
Thus we have $R^1\pi_*|\cal{L}_y|=R^1\pi_*|\cal{L}_z|=0$, and the only contribution is from $|\cal{L}_x|$.
Using the fact that
\begin{equation*}
\int_{\overline{\mathscr{M}}_{0,4}}\kappa_1=\int_{\overline{\mathscr{M}}_{0,4}}\psi_i
=\int_{\overline{\mathscr{M}}_{0,4}}[\overline{\mathscr{M}}(|\Gamma|)]
=1,
\end{equation*}
we  have
\begin{eqnarray*}
\begin{split}
&\LD\be_{x},\be_{x},\be_{x},\be_{xyz}\RD_{0,4}^{P_8}
=\int_{\overline{\mathscr{M}}_{0,4}}\Lambda_{0,4}^{P_8}(\be_{x},\be_{x},\be_{x},\be_{xyz})\\
&=(\frac{q_x^2}{2}-\frac{q_x}{2}+\frac{1}{12})
-\sum_{i=1}^{4}(\frac{1}{12}-\frac{1}{2}\Theta_{x}^{\gamma_i}(1-\Theta_{x}^{\gamma_i}))
+3(\frac{1}{12}-\frac{1}{2}\Theta_{x}^{\gamma_{\Gamma}}(1-\Theta_{x}^{\gamma_{\Gamma}}))\\
%&=-\frac{1}{36}+\frac{4}{36}+\frac{3}{12}\\
&=\frac{1}{3}
\end{split}
\end{eqnarray*}

%Using WDVV equation, the quartic term of the genus-0 primary FJRW potential for $(P_8,G_{\textrm{max}})$ is:\begin{equation}F_4^{P_8}=\end{equation}

{\bf $X_{9}^T=x^2y+y^3+xz^2$-Case: }

Here we compute $\LD\be_{1},\be_{5},\be_{7},\be_{7}\RD_{0,4}^{X_9^T}$. It is also concave, and we have $\deg|\cal{L}_x|=-2$, $\deg|\cal{L}_y|=-1,$ $\deg|\cal{L}_z|=-1.$ There are three dual graphs in $\mathbf{\Gamma}_{0,4,X_{9}^T}(\gamma^1,\gamma^5,\gamma^7,\gamma^7)$. Two of them are as follows, with one of the half edge is decorated by $\gamma_{\Gamma_1}=\gamma^8$:
\begin{center}
\begin{picture}(50,20)
	% Lines
	\put(0,9){\line(-3,4){5}}
    \put(0,9){\line(-3,-4){5}}
	\put(0,9){\line(1,0){20}}
    \put(20,9){\line(3,4){5}}
    \put(20,9){\line(3,-4){5}}
	%\multiput(20,-.5)(0,1){2}{\line(1,0){10}}
	%\put(30,0){\line(1,0){10}}
	
	% Labels
	\put(-6,17){$\gamma^1$}
	\put(-6,0){$\gamma^7$}
	\put(24,17){$\gamma^5$}
	\put(24,0){$\gamma^7$}
	\put(1,11){$\gamma_{\Gamma_1}$}
	\put(13,11){$\gamma_{\Gamma_1}^{-1}$}
\end{picture}
\end{center}
The third dual graph is as follows, with one of the half edge decorated with $\gamma_{\Gamma_2}=\gamma^2$:
\begin{center}
\begin{picture}(50,20)
	% Lines
	\put(0,9){\line(-3,4){5}}
    \put(0,9){\line(-3,-4){5}}
	\put(0,9){\line(1,0){20}}
    \put(20,9){\line(3,4){5}}
    \put(20,9){\line(3,-4){5}}
	%\multiput(20,-.5)(0,1){2}{\line(1,0){10}}
	%\put(30,0){\line(1,0){10}}
	
	% Labels
	\put(-6,17){$\gamma^1$}
	\put(-6,0){$\gamma^5$}
	\put(24,17){$\gamma^7$}
	\put(24,0){$\gamma^7$}
	\put(1,11){$\gamma_{\Gamma_2}$}
	\put(13,11){$\gamma_{\Gamma_2}^{-1}$}
\end{picture}
\end{center}
Those graphs give $\Theta_x^{\gamma_{\Gamma_1}}=\frac{2}{3}, \Theta_x^{\gamma_{\Gamma_2}}=\frac{1}{3}.$
Thus we get
\begin{equation}\label{eq:n=0,X9}
\LD\be_{1},\be_{5},\be_7,\be_7\RD_{0,4}=-\frac{1}{6}.
\end{equation}
%The quartic term of the genus-0 primary FJRW potential for $(X_{9}^T,G_{\textrm{max}})$ is:\begin{equation}F_4^{X_{9}^T}=\end{equation}

{\bf $J_{10}^T=x^3+y^3+xz^2$-Case:}

As for the correlator $\LD\be_{8},\be_{8},\be_{8},\be_{10}\RD_{0,4}^{J_{10}^T}$, we have $\deg|\cal{L}_x|=-1$, $\deg|\cal{L}_y|=-2$ and $\deg|\cal{L}_z|=-1.$ There are three decorated dual graphs, with $\gamma_{\Gamma}=(1,\omega^{2},\omega^2)\in G_{\textrm{max}}$.
\begin{center}
\begin{picture}(50,20)
	% Lines
	\put(0,9){\line(-3,4){5}}
    \put(0,9){\line(-3,-4){5}}
	\put(0,9){\line(1,0){20}}
    \put(20,9){\line(3,4){5}}
    \put(20,9){\line(3,-4){5}}

	%\multiput(20,-.5)(0,1){2}{\line(1,0){10}}
	%\put(30,0){\line(1,0){10}}
	
	% Labels
	\put(-6,17){$\gamma^8$}
	\put(-6,0){$\gamma^8$}
	\put(24,17){$\gamma^{8}$}
	\put(24,0){$\gamma^{10}$}
	\put(1,11){$\gamma_{\Gamma}$}
	\put(13,11){$\gamma_{\Gamma}^{-1}$}
\end{picture}
\end{center}
Applying the orbifold Riemann-Roch formula (\ref{eq:OGRR}) again, we have
\begin{equation}\label{eq:n=0,J10}
\LD\be_{8},\be_{8},\be_{8},\be_{10}\RD_{0,4}^{J_{10}^T}=\frac{1}{3}.
\end{equation}
Similarly, we can compute $$\LD\be_{1},\be_{1},\be_{4},\be_{10}\RD_{0,4}^{J_{10}^T}=\frac{1}{3}.$$

%The quartic term of the genus-0 primary FJRW potential for $(J_{10}^T,G_{\textrm{max}})$ is:\begin{equation}F_4^{J_{10}^T}=\end{equation}

\subsubsection{Computation for genus-0 4-point correlators of Saito-Givental theory}
Here we compute the basic genus-0 4-point correlators of $P_8, X_9, J_{10}$ in Saito-Givental theory. Our methods follow from Noumi, \cite{No}. For $P_8$ case, we refer to the formula directly from \cite{NY}.

{\bf $P_8$-Case:}

The miniversal deformation of $P_8$ is given by
\begin{equation*}
P_{8}(\textbf{s},\textbf{x})=x^3+y^3+z^3+\sum_{i=-1}^{6}s_i\phi_i.
\end{equation*}
where the $\phi_i$'s form a basis as follows:
\begin{equation*}
\phi_{-1}=xyz,\phi_0=1,\phi_1=x,\phi_2=y,\phi_3=z,\phi_4=xy,\phi_5=xz,\phi_6=yz.
\end{equation*}
Using Noumi-Yamada's formula in \cite{NY}, we know
\begin{equation}
\pi_{A}(s_{-1})=\ _2F_1\Big(\frac{1}{3},\frac{1}{3};\frac{2}{3};-\frac{s_{-1}^3}{27}\Big)
\end{equation}
Now the flat coordinates $t_i$ are expanded in the coordinates $s_i$ up to second order:
\begin{equation*}
t_i=s_i+o(s^2).
\end{equation*}
Here $o(s^2)$ is a linear expansion of $s_i$'s of order greater than 2.
Notice here we have $s_{-1}=t_{-1}$. The deformed Jacobi ideal is generated by:
\begin{eqnarray*}
\partial_{x}P_{8}(\textbf{s},\textbf{x})&=&3x^2+s_{-1}yz+s_{4}y+s_{5}z+s_{1}\\
\partial_{y}P_{8}(\textbf{s},\textbf{x})&=&3y^2+s_{-1}xz+s_{4}x+s_{6}z+s_{2}\\
\partial_{z}P_{8}(\textbf{s},\textbf{x})&=&3z^2+s_{-1}xy+s_{5}x+s_{6}y+s_{3}
\end{eqnarray*}
and
$$\textrm{Hess}(P_{8}(\textbf{s},\textbf{x}))=(216+8s_{-1}^3)xyz\mod\textrm{Jac}(P_{8}).$$

The B-model genus-0 correlators with three or four insertions are:
\begin{eqnarray}
C_{i,j,k}(\textbf{t}):&=&\textrm{Res}_{\textbf{t}}\frac{W_{t_i}W_{t_j}W_{t_k}\,dx\wedge\,dy\wedge\,dz}{W_x\,W_y\,W_z}=C\mu
\\
C_{i,j,k,l}(\textbf{t}):&=&\frac{\partial C_{i,j,k}}{\partial t_l}
\end{eqnarray}
$C$ is the unique constant such that $W_{t_i}W_{t_j}W_{t_k}=C\cdot \textrm{Hessian}(W)\mod\,\textrm{Jac}(W)$.

Thus we have the following genus-0 4-point correlators:
\begin{eqnarray*}
\begin{split}
%&\LD x,y,z\RD_{0,3}=C_{1,2,3}=1\\
&\LD x,y,z,xyz\RD_{0,4}^{P_8}=C_{1,2,3,-1}(0)=0,\\
&\LD x,x,x,xyz\RD_{0,4}^{P_8}=C_{1,1,1,-1}(0)=\frac{\partial(x^3)}{\partial t_{-1}}|_{\textbf{t}=0}=-\frac{1}{3}.
\end{split}
\end{eqnarray*}
%And the quartic term is\begin{eqnarray*}F_4^B(P_8)=-\frac{1}{18}t_1^3t_{-1}+\cdots\end{eqnarray*}
We define $\bar\Psi_{P_8}:\phi_i\mapsto(-1)^{1-d_i}\Psi_{P_8}(\phi_i)$ and rescale the primitive form by $-1$; then $\bar\Psi_{P_8}$ matches the ring structural constants and basic genus-0 four-point correlators.
\begin{equation}
\bar\Psi_{P_8}\Big(F_3^B(P_8)\Big)=F_3^A(P_8), \bar\Psi_{P_8}\Big(F_4^B(P_8)\Big)=F_4^A(P_8).
\end{equation}
By Theorem \ref{thm:FJRW-reconstruction}, this proves the LG-to-LG mirror theorem for $P_8$.

{\bf $X_9$-Case: }

The miniversal deformation is
\begin{equation*}
X_9(\textbf{s},\textbf{x}):=x^2z+xy^3+z^2+\sum_{i=-1}^{7}s_i\phi_i
\end{equation*}
where $\phi_{-1}=xyz;\phi_0=1;\phi_1=x;\phi_2=y;\phi_3=z;\phi_4=y^2;\phi_5=xy;\phi_6=xz;\phi_7=yz.$
The Picard-Fuchs equation is the same as in the $P_8$ case. Consider the period matrix, we denote a multi-valued flat section of the cohomology bundle
\begin{equation*}
\Phi_{i_1,\cdots,i_k}:=\int\phi_{i_1}\cdots\phi_{i_k}\frac{dxdydz}{df}.
\end{equation*}
The Gauss-Manin connection (\ref{eq:Guass-Manin}) implies $\Phi_{i_1,\cdots,i_k}$ satisfies a system of differential equations. For example, $\Phi_1$ and $\Phi_2$ satisfy
\begin{eqnarray}
\begin{split}
&(1+\frac{s_{-1}^3}{27})\Phi_1'=-\frac{s_{-1}^2}{108}\Phi_1-\frac{1}{6}\Phi_2\\
&(1+\frac{s_{-1}^3}{27})\Phi_2'=\frac{s_{-1}}{36}\Phi_1-\frac{s_{-1}^2}{54}\Phi_2\\
\end{split}
\end{eqnarray}
Thus $\Phi_1$ will satisfy a hypergeometric equation and can be solved:
\begin{equation*}
\Phi_1=
\,_2F_1(\frac{1}{12},\frac{5}{6};\frac{2}{3};-\frac{s_{-1}^3}{27})A_1
-\frac{s_{-1}}{6}\,_2F_1(\frac{5}{12},\frac{7}{6};\frac{4}{3};-\frac{s_{-1}^3}{27})A_2
\end{equation*}
where $A_i$ is some flat section with eigenvalues $e^{2\pi i (-d_i)}$ under the monodromy operator. For our purpose, we solve all $\Phi_i$'s to the first order in $s_{-1}$,
\begin{align*}
\Phi_1&=A_1-s_{-1}{A_2}/6+o(s_{-1},A), & \Phi_2&=A_2+o(s_{-1},A), & \Phi_3&=A_3-s_{-1}{A_5}/6+o(s_{-1},A),\\
\Phi_4&=-s_{-1}{A_3}/3+A_4+o(s_{-1},A), & \Phi_5&=A_5+o(s_{-1},A), & \Phi_6&=A_6-s_{-1}{A_7}/6+o(s_{-1},A),\\
\Phi_7&=A_7+o(s_{-1},A).&&&&
\end{align*}
%$$\begin{tabular}{lll}$\Phi_1=A_1-\frac{1}{6}\sigma A_2+o(\sigma);$&$\Phi_2=A_2+o(\sigma);$&$\Phi_3=A_3-\frac{1}{6}\sigma A_5+o(\sigma);$\\$\Phi_4=-\frac{1}{3}\sigma A_3+A_4+o(\sigma);$&$\Phi_5=A_5+o(\sigma);$&$\Phi_6=A_6-\frac{1}{6}\sigma A_7+o(\sigma);$\\$\Phi_7=A_7+o(\sigma).$&&\\\end{tabular}$$
Here $o(s_{-1},A)$ is the $H^2(X_{s,\lambda})$-valued linear expansion of $s_{-1}$ of order greater than 1.
Take the second-order expansion of $C_{\delta}(\textbf{s},\textbf{x})$, and then integrate, for example:
\begin{equation*}
C_{\frac{1}{4}}(\textbf{s},\textbf{x})=\widetilde{\Gamma}(\frac{3}{4})\Big(s_6 \phi_6+s_7 \phi_7+o(s^2,\phi)\Big)
\end{equation*}
Here $o(s^2,\phi)$ means $\mathscr{Q}_W$-valued linear expansions of $s_i$'s of order greater than 2.
%\begin{eqnarray*}
%\begin{split}
%C_{0}(s,x)&=\widetilde{\Gamma}(0);\\
%C_{\frac{1}{4}}(s,x)&=\widetilde{\Gamma}(\frac{3}{4})(s_6 \phi_6+s_7 \phi_7+\cdots);\\
%C_{\frac{1}{2}}(s,x)&=\widetilde{\Gamma}(\frac{1}{2})(s_3 \phi_3+s_4 \phi_4+s_5 \phi_5)
%+\widetilde{\Gamma}(\frac{3}{2})(s_6s_7 \phi_6 \phi_7+\frac{1}{2}s_6^2\phi_6^2+\frac{1}{2}s_7^2\phi_7^2)+\cdots;\\
%C_{\frac{3}{4}}(s,x)&=\widetilde{\Gamma}(\frac{1}{4})(s_1 \phi_1+s_2 \phi_2)
%+\widetilde{\Gamma}(\frac{5}{4})(\sum_{i=3,4,5;j=6,7}s_is_j\phi_i\phi_j)+\cdots;\\
%C_{1}(s,x)&=\widetilde{\Gamma}(0)(s_0 \phi_0)
%+\widetilde{\Gamma}(1)(\sum_{i=1,2;j=6,7}s_i s_j\phi_i\phi_j)\\
%&+\widetilde{\Gamma}(1)(s_3 s_4\phi_3\phi_4+s_3 s_5\phi_3\phi_5+s_4 s_5\phi_4\phi_5)
%+\frac{\widetilde{\Gamma}(1)}{2}(\sum_{i=3,4,5}s_i^2\phi_i^2)+\cdots.\\
%\end{split}
%\end{eqnarray*}
We can choose $\alpha_i\in\,H^2(X_{\sigma,1},\mathbb{C})$ such that the intersection pairing is
\begin{equation*}
\LD\alpha_i,A_j\RD=\int_{\alpha_i}A_j=\widetilde{\Gamma}(\frac{3}{4})^{-1}\delta_{ij}.
\end{equation*}
The choice gives the expression of the period matrix as in (\ref{eq:flat coord}). Thus,
\begin{align*}
t_6=\Pi_{\frac{1}{2},6},& & t_7=\Pi_{\frac{1}{4},7}.
\end{align*}
where $t_6$ and $t_7$ are flat coordinates. Now we have:
\begin{equation*}
t_7=\int_{\alpha_7}C_{\frac{1}{4}}(\textbf{s},\textbf{x})\frac{\omega}{df}=\frac{\widetilde{\Gamma}(\frac{3}{4})}{\pi_A(s_{-1})}\int_{\alpha_7}\Big(s_6 \phi_6+s_7 \phi_7+o(s^2,\phi)\Big)\frac{dxdydz}{df}
%&=\frac{s_6}{\pi_A(\sigma)}\LD\alpha_7,\Phi_6\RD+\frac{s_7}{\pi_A(\sigma)}\LD\alpha_7,\Phi_7\RD+o(s^2)\\
=s_7-\frac{1}{6}s_{-1}s_6+o(s^2)
\end{equation*}
\begin{rem}
Here we also have to compute the expansion of $\Phi_{i,j}$.
\end{rem}
%\begin{eqnarray*}
%\Pi_{0,-1}=t_{-1},\Pi_{0,0}=1,\Pi_{1,-1}=t_{0}t_{-1}+t_{1}t_{7}+t_{2}t_{6}+t_{3}t_{5}+\frac{1}{2}t_{4}^2,\Pi_{1,0}=t_0;\\
%\Pi_{\frac{3}{4},1}=t_1,\Pi_{\frac{3}{4},2}=t_2,\Pi_{\frac{1}{2},3}=t_3,\Pi_{\frac{1}{2},4}=t_4,\Pi_{\frac{1}{2},5}=t_5,\Pi_{\frac{1}{2},6}=t_6,\Pi_{\frac{1}{4},7}=t_7.\\
%\end{eqnarray*}
Besides $t_{-1}=s_{-1}$, other flat coordinates have the second-order expansion
\begin{align*}
t_0 & = s_0-{s_1s_6}/3-{s_4s_5}/3-{s_3^2}/3 +o(s^2),&
t_1 & = s_1-{s_3s_6}/12+{s_4s_7}/6 +o(s^2),\\
t_2 & = s_2-{s_{-1}s_1}/6-{s_5s_6}/6-{s_3s_7}/3 +o(s^2),&
t_3 & = s_3-{s_{-1}s_4}/3-{s_6^2}/6 +o(s^2),\\
t_4 & = s_4-{s_7^2}/4 +o(s^2),&
t_5 & = s_5-{s_{-1}s_3}/6-{s_6s_7}/6 +o(s^2),\\
t_6 & = s_6 +o(s^2),&
t_7 & = s_7-{s_{-1}s_6}/6 +o(s^2).
\end{align*}
%\begin{equation*}
%\begin{tabular}{lll}$t_0\doteq s_0-\frac{1}{3}s_1s_6-\frac{1}{3}s_4s_5-\frac{1}{3}s_3^2;$
%&$t_3\doteq s_3-\frac{1}{3}s_{-1}s_4-\frac{1}{6}s_6^2;$
%&$t_6\doteq s_6;$\\
%$t_1\doteq s_1-\frac{1}{12}s_3s_6+\frac{1}{6}s_4s_7;$
%&$t_4\doteq s_4-\frac{1}{4}s_7^2;$
%&$t_7\doteq s_7-\frac{1}{6}s_{-1}s_6;$\\
%$t_2\doteq s_2-\frac{1}{6}s_{-1}s_1-\frac{1}{6}s_5s_6-\frac{1}{3}s_3s_7;$
%&$t_5\doteq s_5-\frac{1}{6}s_{-1}s_3-\frac{1}{6}s_6s_7;$
%&$t_{-1}\doteq s_{-1}.$\\
%\end{tabular}
%\end{equation*}
Thus we have
\begin{eqnarray*}
\begin{split}
%&\LD x,y,z\RD_{0,3}=C_{1,2,3}=1,\\
&\LD x,y,xz,xz\RD_{0,4}=C_{1,2,6,6}(0)=\frac{1}{6}.
\end{split}
\end{eqnarray*}
We define $\bar\Psi_{X_9}:\phi_i\mapsto(-1)^{1-d_i}\Psi_{X_9}(\phi_i)$ and rescale the primitive form by $-1$; then $\bar\Psi_{X_9}$ matches the ring structural constants and basic genus-0 4-point correlators. By Theorem \ref{thm:FJRW-reconstruction}, this proves the LG-to-LG mirror theorem for $X_9$.

%Thus, the cubic term and quartic term of the primary potential are:\begin{eqnarray*}&F_3^B(X_{9})=t_1t_2t_3+\cdots\\&F_4^B(X_{9})=-\frac{1}{12}t_{-1}^2t_6t_7+\frac{1}{12}t_1t_2t_6^2+\cdots\end{eqnarray*}

{\bf $J_{10}$-Case: }

For this case, the miniversal deformation is
\begin{equation*}
J_{10}(\textbf{s},\textbf{x})=x^3z+y^3+z^2+\sum_{i=-1}^{8}s_i\phi_i
\end{equation*}
We denote a basis for the Milnor ring
\begin{equation*}
\phi_{-1}=xyz;\phi_0=1;\phi_1=x;\phi_2=y;\phi_3=x^2;\phi_4=xy;\phi_5=z;\phi_6=x^2y;\phi_7=xz;\phi_8=yz.
\end{equation*}
The Picard-Fuchs equation for the primitive form is again as in the $P_8$ case. As before, we can compute the first-order expansion of the intergrand as:
\begin{align*}
\Phi_1&=A_1+o(s_{-1},A), &\Phi_2&=A_2+o(s_{-1},A), & \Phi_3&=-s_{-1} A_2/3+A_3+o(s_{-1},A),\\
\Phi_4&=A_4+o(s_{-1},A), &\Phi_5&=-s_{-1} A_4/6+A_5+o(s_{-1},A), &\Phi_6&=A_6+o(s_{-1},A),\\
\Phi_7&=A_7+o(s_{-1},A), &\Phi_8&=A_8+o(s_{-1},A). &&
\end{align*}
%$$\begin{tabular}{l l l}$\Phi_1=A_1+o(\sigma);$&$\Phi_2=A_2+o(\sigma);$&$\Phi_3=-\frac{1}{3}\sigma A_2+A_3+o(\sigma);$\\$\Phi_4=A_4+o(\sigma);$&$\Phi_5=-\frac{1}{6}\sigma A_4+A_5+o(\sigma);$&$\Phi_6=A_6+o(\sigma);$\\$\Phi_7=A_7+o(\sigma);$&$\Phi_8=A_8+o(\sigma).$&\\\end{tabular}$$
By choosing the middle homology cycle properly, we compare the two expressions of the period matrix $\Pi_{\delta,i}$,
and then obtain the second-order expansion:
\begin{align*}
t_{i}&= s_i+o(s^2), i\neq0,1,2,4.&&\\
t_{0}&= s_{0}-s_{3}s_{7}/3-s_{5}^2/6+o(s^2),&
t_{1}&= s_{1}-s_{5}s_{7}/6+o(s^2),\\
t_{2}&= s_{2}-s_{3}s_{-1}/3-s_{6}s_{7}/3-s_{5}s_{8}/3+o(s^2),&
t_{4}&= s_{4}-s_{5}s_{-1}/6-s_{7}s_{8}/6+o(s^2).
\end{align*}
%$$\begin{tabular}{l l l}$t_{-1}=s_{-1}$&$t_{8}\doteq s_{8}$&$t_{7}\doteq s_{7}$\\$t_{6}\doteq s_{6}$&$t_{5}\doteq s_{5}$&$t_{4}\doteq s_{4}-\frac{1}{6}s_{5}s_{-1}-\frac{1}{6}s_{7}s_{8}$\\
%$t_{3}\doteq s_{3}$&$t_{2}\doteq s_{2}-\frac{1}{3}s_{3}s_{-1}-\frac{1}{3}s_{6}s_{7}-\frac{1}{3}s_{5}s_{8}$&$t_{1}\doteq s_{1}-\frac{1}{6}s_{5}s_{7}$\\$t_{0}\doteq s_{0}-\frac{1}{3}s_{3}s_{7}-\frac{1}{6}s_{5}^2$&&\\\end{tabular}$$
It is easy to check that the genus-0 3-point correlators match those in the A-model. We compute the key basic correlators:
\begin{eqnarray*}
\begin{split}
&\LD x,y,z,xyz\RD_{0,4}=C_{1,2,3,-1}(0)=0,\\
&\LD y,y,y,xyz\RD_{0,4}=C_{2,2,2,-1}(0)=\frac{\partial(y^3)}{\partial s_{-1}}|_{\textbf{t}=0}=-\frac{1}{3}.\\
&\LD x,x,xz,xyz\RD_{0,4}=C_{1,1,7,-1}(0)=\frac{\partial(x^3z)}{\partial s_{-1}}|_{\textbf{t}=0}=-\frac{1}{3}.
\end{split}
\end{eqnarray*}
We define $\bar\Psi_{J_{10}}:\phi_i\mapsto(-1)^{1-d_i}\Psi_{J_{10}}(\phi_i)$, and rescale the primitive form by $-1$. Now,
\begin{equation*}
\bar\Psi_{J_{10}}\Big(F_{0,3}^B\Big)=F_{0,3}^A,
\bar\Psi_{J_{10}}\Big(F_{0,4}^B\Big)=F_{0,4}^A.
\end{equation*}
The LG-to-LG mirror symmetry for $(J_{10}^T,G_{\textrm{max}})$ follows from Theorem \ref{thm:FJRW-reconstruction}.
%Thus, the cubic term and quartic term of the primary potential are:\begin{eqnarray*}&F_3^B(J_{10})=\cdots+t_1t_2t_4+-\frac{1}{4}t_1^2t_6\\&F_4^B(J_{10})=-\frac{1}{18}t_2^3t_{-1}-\frac{1}{6}t_1^2t_7t_{-1}+\cdots\end{eqnarray*}

\subsection{Convergence of FJRW-theory}
For the elliptic singularity $(W,G_{\textrm{max}})$, recall that the \emph{FJRW ancestor correlator function}:
\begin{equation*}
\LD\LD\tau_{l_1}(\alpha_1),\cdots,\tau_{l_n}(\alpha_n)\RD\RD_{g,n}^{W,G_{\textrm{max}}}(\textbf{t})
=\sum_{k=0}\frac{1}{k!}\LD\tau_{l_1}(\alpha_1),\cdots,\tau_{l_n}(\alpha_n),\textbf{t},\cdots,\textbf{t}\RD_{g,n+k}^{W,G_{\textrm{max}}}
\end{equation*}
where $\textbf{t}\in H^{FJRW}_{W,G_{\textrm{max}}}$ (include the complex degree one case). In this subsection, we prove the convergence of the functions near $\textbf{t}=0$.
We first define the \emph{length} for a FJRW genus-0 primary correlator.
\begin{df}
We say a genus-0 n-points FJRW correlator has \emph{length $m$} if it can be reconstructed by genus-0 FJRW correlators with fewer marked points by at most $m+1$ WDVV equations.
\end{df}
Recall the insertions $\alpha_i$ belong to the basis we fixed in subsection 3.1.
We denote
\begin{eqnarray}
\begin{split}
&I_{g,n}:=\max\Big|\LD\alpha_1,\cdots,\alpha_n\RD_{g,n}^{W,G_{\textrm{max}}}\Big|\\
&I_{0,n}(m):=\max\Big|\LD\alpha_1,\cdots,\alpha_n\RD_{0,n}^{W,G_{\textrm{max}}}\big|\LD\alpha_1,\cdots,\alpha_n\RD_{0,n}^{W,G_{\textrm{max}}} \text{ is of length }m\Big|\\
&I_{g,K=n+k,L}:=\max\Big|\int_{\overline{\mathscr{M}}_{g,n+k}}\pi_{n,k}^{*}(\Psi_{g,n,L})\cdot\Lambda_{g,n+k}^{W}(\alpha_1,\cdots,\alpha_{n},\be_{J^{-1}},\cdots,\be_{J^{-1}})\Big|
\end{split}
\end{eqnarray}
where $\Psi_{g,n,L}$ is a monomial of $\psi$ and $\kappa$ classes in $H^*(\overline{\mathscr{M}}_{g,n})$ with $\deg\Psi_{g,n,L}=L$. The Selection rule (\ref{deg:FJRW}) implies nonzero integrals, $L$ is bounded by $g$ and $n$,
$$2g-2\leq\,L\leq 2g-2+n.$$
On the other hand, for $m\geq0$, the WDVV equation (\ref{eq:WDVV-FJRW}) implies
\begin{equation}\label{eq:length-FJRW}
I_{0,n}(m)\leq12I_{0,n}(m-1)+2|I|
\end{equation}
where we have the convention $I_{0,n}(-1)=0$.
\begin{lm}
For $K\leq4$, $I_{0,K}\leq\,C_0$ for some $C_0$. For $K\geq5$, there exists sufficient large constant $C$, such that
\begin{equation}\label{eq:FJRW g=0}
I_{0,K}\leq C^{K-4}(K-5)!
\end{equation}
\end{lm}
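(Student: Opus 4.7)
The plan is to prove the bound by induction on $K$ using the WDVV recursion (3.17). For the base case $K \le 4$ (and in fact $K \le 5$), the FJRW state space is finite-dimensional and the selection rule $\sum \deg_W(\alpha_i) = K-2$ cuts out only finitely many tuples of basis elements with a nonzero correlator, so $I_{0,K}$ is a maximum over a finite set; absorb this into a constant $C_0$ and choose $C$ at the outset large enough that $C \ge C_0$ and $I_{0,5} \le C$.

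For the inductive step $K \ge 6$, I would iterate the inequality $I_{0,n}(m) \le 12\, I_{0,n}(m-1) + 2|I|$ until the length is driven to $-1$. By the reconstruction theorem (Theorem 3.2 and Lemma 3.4) the length $m$ needed is bounded by a constant $M$ depending only on the singularity, so the accumulated factor $12^M$ is absorbed into a singularity-dependent constant $D$. Since no basic correlators exist for $K \ge 6$, the only surviving terms are products coming from the WDVV splitting term $I$, so
\begin{equation*}
I_{0,K} \;\le\; D \sum_{k=1}^{K-4} N_k \binom{K-3}{k}\, I_{0,k+3}\, I_{0,K-k},
\end{equation*}
where $N_k$ absorbs the two $\gamma_2\leftrightarrow\gamma_3$ symmetry terms and the pairing matrix $\eta^{\mu\nu}$, bounded by (twice) the state-space dimension.

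Applying the inductive hypothesis gives $I_{0,k+3} \le C^{k-1}(k-2)!$ and $I_{0,K-k} \le C^{K-k-4}(K-k-5)!$ for $2 \le k \le K-5$, while the boundary cases $k=1$ and $k=K-4$ use $I_{0,4} \le C_0$. The core estimate is the combinatorial identity
\begin{equation*}
\binom{K-3}{k}(k-2)!(K-k-5)! \;=\; \frac{(K-3)!}{k(k-1)(K-k-3)(K-k-4)} \;=\; (K-5)!\cdot\frac{(K-3)(K-4)}{k(k-1)(N-k)(N-k-1)}
\end{equation*}
with $N=K-3$. I would then split the sum at $k=N/2$ and, using symmetry together with the bound $1/\bigl(k(k-1)(N-k)(N-k-1)\bigr) \le 4/(N^2 k(k-1))$ in the lower half, compare to $\sum_{k\ge 2} 1/(k(k-1)) = 1$, yielding
\begin{equation*}
\sum_{k=2}^{N-2}\frac{N(N-1)}{k(k-1)(N-k)(N-k-1)} \;\le\; S
\end{equation*}
for a universal constant $S$. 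The boundary terms $k=1,K-4$ contribute at most $2(K-3)C_0 C^{K-5}(K-6)! \le 6 C_0 C^{K-5}(K-5)!$ (using $K\ge 6$, so $(K-3)/(K-5)\le 3$). Combining, $I_{0,K} \le (6C_0 + S) D C^{K-5}(K-5)!$, and choosing $C \ge (6C_0+S)D$ completes the induction.

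The main obstacle is the combinatorial bound on the sum; the identity above is what makes it work, since the factors $(k-2)!$ and $(K-k-5)!$ in the product of the inductive hypotheses exactly cancel the factorials inside $\binom{K-3}{k}$ except for a $1/\bigl(k(k-1)(N-k)(N-k-1)\bigr)$ term that is summable uniformly in $N$. A secondary concern is the uniform length bound $M$: I will verify it by invoking Lemma 3.5 (primitive-element reduction to basic correlators, which terminates in at most two WDVV steps per non-primitive insertion) plus the explicit classification of basic correlators carried out case-by-case for $P_8$, $X_9^T$, $J_{10}^T$ in the proof of Theorem 3.2.
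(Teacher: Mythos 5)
Your proposal is correct and follows essentially the same route as the paper: induction on the number of marked points, iterating the length inequality $I_{0,n}(m)\leq 12 I_{0,n}(m-1)+2|I|$ a uniformly bounded number of times to reduce to the WDVV splitting term, and then exploiting the cancellation $\binom{K-3}{k}(k-2)!(K-k-5)! = (K-5)!\,\tfrac{(K-3)(K-4)}{k(k-1)(K-3-k)(K-4-k)}$ so that the sum over splittings is bounded by a universal constant, with the $k=1$ and $k=K-4$ boundary terms handled via $I_{0,4}\leq C_0$. The only (immaterial) difference is how the summable factor is estimated — the paper uses the algebraic identity $\tfrac{k(k-1)}{i(i-1)(k-i)(k-i-1)}=\tfrac{k}{k-1}\bigl(\tfrac{1}{i-1}+\tfrac{1}{k-i}\bigr)\bigl(\tfrac{1}{i}+\tfrac{1}{k-i-1}\bigr)$ to get the constant $9/2$, where you split the sum at the midpoint.
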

\begin{proof}
For $n$ fixed, by degree argument, the length of $\LD\alpha_1,\cdots,\alpha_n\RD_{0,n}^{W,G_{\textrm{max}}}$ is bounded by some $M$, which depends only on $n$. Thus (\ref{eq:length-FJRW}) implies
$$
I_{0,n}\leq\,I_{0,n}(M)<12^{M+1}|I|
$$
we only need to estimate the values of those correlators with fewer insertions.
We use induction on the number of insertions. For $K\leq5$, the estimation holds as there are just finite correlators. Assume the estimation (\ref{eq:FJRW g=0}) holds for all $K\leq k+2$, $k\geq 4$, then the induction is true by the following estimation,
\begin{eqnarray*}
\begin{split}
|I|&\leq 12\sum_{i=2}^{k-2}{k\choose i}I_{0,i+3}I_{0,k-i+3}+24k\,I_{0,4}I_{0,k+2}\\
%&\leq 2\sum_{i=2}^{k-2}{k\choose i}C^{i-1}(i-2)!C^{k-i-1}(k-i-2)!+4kC^{k-2}(k-3)!\\
%&\leq\Big(\frac{2}{C}\sum_{i=2}^{k-2}\frac{k!(i-2)!(k-i-2)!}{i!(k-i)!(k-2)!}+\frac{4k}{C(k-2)}\Big)C^{k-1}(k-2)!\\
&\leq\Big(12\sum_{i=2}^{k-2}\frac{k(k-1)}{i(i-1)(k-i)(k-i-1)}+\frac{24kC_0}{k-2}\Big)C^{k-2}(k-2)!\\
&\leq\frac{54+48C_0}{C}C^{k-1}(k-2)!
\end{split}
\end{eqnarray*}
Here we use the inequality
\begin{equation*}
\sum_{i=2}^{k-2}\frac{k(k-1)}{i(i-1)(k-i)(k-i-1)}=\frac{k}{k-1}\sum_{i=2}^{k-2}(\frac{1}{i-1}+\frac{1}{k-i})(\frac{1}{i}+\frac{1}{k-i-1})
\leq\frac{9}{2}
\end{equation*}
\end{proof}

\begin{lm}
For genus-1 primary correlators, we have
\begin{equation*}
I_{1,K}\leq C^{K}K!
\end{equation*}
\end{lm}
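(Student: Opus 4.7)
The plan is to prove the bound by induction on $K$, using Getzler's relation in exactly the way it was used in the proof of Theorem~\ref{thm:FJRW genus-1} (the genus-1 reconstruction). By the Selection Rule (\ref{deg:FJRW}), any nonzero genus-1 primary correlator has all insertions equal to the unique complex-degree-one element $\be_{J^{-1}}$, so $I_{1,K}$ is simply $|\LD\be_{J^{-1}},\dots,\be_{J^{-1}}\RD_{1,K}^{W,G_{\mathrm{max}}}|$. The base cases $K\leq K_0$ (for some fixed small $K_0$, e.g.\ $K\leq 2$) are handled by the finite computations already done in the proof of Theorem~\ref{thm:FJRW genus-1}, so we are free to enlarge $C$ at the end to absorb them.

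For the inductive step, I pull back Getzler's relation (\ref{eq:Getzler}) along the forgetful map $\pi_{4,K-2}:\overline{\mathscr{M}}_{1,K+2}\to\overline{\mathscr{M}}_{1,4}$ and integrate the class $\Lambda_{1,K+2}^{W,G_{\mathrm{max}}}$ with the same choice of four distinguished insertions used in the corresponding case of Theorem~\ref{thm:FJRW genus-1} (for instance $\be_{x},\be_{yz},\be_{y},\be_{xz}$ in the $P_8$ case) together with $K-2$ copies of $\be_{J^{-1}}$. As recorded in equations (\ref{eq:FJRW-Getzler1}), (\ref{eq:FJRW-Getzler2}), (\ref{eq:FJRW-Getzler3}), the $\delta_{2,2}$ contribution isolates $3\,I_{1,K-2}^{*}$ (the correlator with $K$ copies of $\be_{J^{-1}}$, i.e.\ $I_{1,K}$), and all remaining strata split via the Splitting Axiom into either (a) products in which the genus-1 factor has strictly fewer than $K$ insertions of $\be_{J^{-1}}$, or (b) purely genus-0 products. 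Schematically,
\begin{equation*}
I_{1,K} \;\leq\; A\sum_{k=0}^{K-1}\binom{K-2}{k}I_{1,k+c_1}\cdot I_{0,(K-k)+c_2} \;+\; B\sum_{\substack{k_1+k_2+k_3\\=K-2}}\binom{K-2}{k_1,k_2,k_3}\prod_{j} I_{0,k_j+c_j'},
\end{equation*}
where $A,B$ and the small constants $c_i,c_j'$ depend only on the singularity and the finite combinatorics of Getzler's relation (they absorb the coefficients $4,2,6,1,1,2$, the Poincar\'e pairing constants, and the number of $G$-decorations of each splitting).

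Assuming inductively that $I_{1,k}\leq C^{k}k!$ for all $k<K$ and using the genus-0 bound (\ref{eq:FJRW g=0}), each term in the first sum is at most
\begin{equation*}
\binom{K-2}{k}C^{k+c_1}(k+c_1)!\cdot C^{K-k+c_2-4}(K-k+c_2-5)! \;\leq\; C^{K+c}K!\cdot\frac{(k+c_1)!(K-k+c_2-5)!}{(k)!(K-k-2)!(K-2)!}\cdot K!,
\end{equation*}
and the telescoping sum $\sum_k \frac{1}{(K-k)(K-k-1)}$ that appears after simplification is uniformly bounded (as in the genus-0 estimate already carried out). The second sum is bounded similarly by a convergent multinomial tail dominated by $\tilde{C}^K K!$. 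Adding the two contributions and enlarging $C$ once more to dominate the constants $A,B,\tilde C$ and the base cases completes the induction.

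The main technical obstacle is the last paragraph: one must verify that the combinatorial sums arising from the Splitting Axiom applied to $\delta_{2,3},\delta_{2,4},\delta_{3,4},\delta_{0,3},\delta_{0,4},\delta_{\beta}$ really do grow at rate $C^K K!$ and not faster, which comes down to the same convexity trick used in the genus-0 lemma, namely $\sum_k \frac{K(K-1)}{k(k-1)(K-k)(K-k-1)}\leq \tfrac{9}{2}$. The selection rule is crucial here, since it forces only $\be_{J^{-1}}$ to appear and therefore drastically restricts which splittings contribute a nonvanishing term; otherwise the multinomial sums would not close.
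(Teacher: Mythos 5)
Your proposal follows essentially the same route as the paper: induction on the number of insertions, pulling back Getzler's relation along $\pi_{4,K-2}$ with the same four distinguished insertions plus copies of $\be_{J^{-1}}$, isolating $3I_{1,K}$ from the $\delta_{2,2}$ stratum, and bounding the remaining strata by binomial sums of genus-$1$ correlators with fewer insertions and genus-$0$ correlators controlled by the bound $I_{0,K}\leq C^{K-4}(K-5)!$ together with the same convexity estimate. The only differences are presentational (the paper notes that the $\delta_{2,3}$ and $\delta_{2,4}$ contributions vanish outright by the selection rule, and your displayed intermediate inequality has a stray factor of $K!$), but the argument is the one in the paper.
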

\begin{proof}
We only give the proof of $P_8$ case, i.e. for $I_{1,K}^{P_8}$. The other two cases are similar.
It is easy to see the estimation holds for $K=1$. Thus we can use the method of induction, assume it holds for $K\leq k+1$, $k\geq0$.
In this section, we simplify the notation by $\Lambda:=\Lambda_{1,k+4}^{P_8}(\be_{x},\be_{yz},\be_{y},\be_{xz},\be_{xyz},\cdots,\be_{xyz})$. Recall
\begin{equation*}
\LD\be_{xyz},\cdots,\be_{xyz}\RD_{1,k+2}^{P_8}=\frac{1}{3}\int_{\pi_{4,k}^{-1}(\delta_{2,2})}\Lambda
\end{equation*}
The integration of $\Lambda$ on $\pi_{4,k}^{-1}(\delta_{2,3})$ and $\pi_{4,k}^{-1}(\delta_{2,4})$ are both zero. And we also have
\begin{equation*}
\Big|\int_{\pi_{4,k}^{-1}(\delta_{3,4})}
\Lambda\Big|
\leq4\sum_{i=0}^k\ {k\choose i}\,I_{1,i+1}I_{0,3}I_{0,k-i+4}
%&\leq\sum_{i=0}^k\ {k\choose i}C^{i+1}(i+1)!C^{k-i+4-4}(k-i-1)!\\
%&=C^{k+1}\Big(\sum_{i=0}^{k-1}k!\frac{i+1}{k-i}+(k+1)!\Big)\\
%&=C^{k+1}(k+1)!\Big(\sum_{i=0}^{k-1}\frac{(k+1)-(k-i)}{(k-i)(k+1)}+1\Big)\\
%&=C^{k+1}(k+1)!\Big(\sum_{i=1}^{k+1}\frac{1}{i}\Big)\\
\leq\frac{4C_0+4C_0^2}{C}C^{k+2}(k+2)!
\leq C^{k+2}(k+2)!
\end{equation*}
Next, we consider the genus-0 contribution. Similarly, we have
\begin{equation*}
\Big|\int_{\pi_{4,k}^{-1}(\delta_{0,3}+\delta_{0,4}-2\delta_{\beta})}\Lambda\Big|
\leq C^{k+2}(k+2)!
\end{equation*}
Now we integrate $\Lambda$ on
$\pi_{4,k}^{-1}(12\delta_{2,2}+4\delta_{2,3}-2\delta_{2,4}+6\delta_{3,4}+\delta_{0,3}+\delta_{0,4}-2\delta_{\beta})$.
Combine all the inequalities above, Getzler's relation implies
%\begin{eqnarray}\begin{split}&\delta_{\beta}(\cdots)\\&=\sum_{i=0}^{k}{k\choose\,i}I_{0,i+4}I_{0,k-i+4}\\&\leq\sum_{i=0}^k{k\choose\,i}C^{i+4-4}(i+4-5)!C^{k-i+4-4}(k-i+4-5)!\\&=\Big(\sum_{i=1}^{k-1}\frac{k!}{i(k-i)}+2(k-1)!\Big)C^k\\&\leq2\,C^{k}k!\end{split}\end{eqnarray}
%\begin{eqnarray}\begin{split}&\delta_{0,4}(\cdots)\\&=\sum_{i=0}^{k}{k\choose\,i}I_{0,i+5}I_{0,k-i+3}\\&\leq\sum_{i=0}^{k-2}\,{k\choose\,i}C^{k}i!(k-i-2)!+I_{0,3}I_{0,k+5}+k\,I_{0,4}I_{0,k+4}\\&=C^{k}k!\Big(\sum_{i=0}^{k-2}\frac{1}{(k-i)(k-i-1)}+C+1\Big)\\&\leq\,2C^{k+1}k!\end{split}\end{eqnarray}
\begin{equation*}
I_{1,k+2}^{P_8}
=\max\Big|\LD\be_{xyz},\cdots,\be_{xyz}\RD_{1,k+2}^{P_8}\Big|
\leq C^{k+2}(k+2)!
\end{equation*}
\end{proof}
%\begin{rem}Here we should pay attention to $I_{0,3}, I_{0,4}$, they may not satisfy the condition, we have to be carefully handle those terms. Or we can handle it finally in the summation, where the estimation loosely have a $\frac{1}{2}$ factor. We need to verify $I_{1,0}\leq1, I_{1,1}\leq1, I_{1,2}\leq2.$\end{rem}
%Before we introduce the next lemma, let us first simplify some notations:\begin{df}For a marked nodal curve $\mathscr{C}_{g,n}$, we define the function \begin{equation*}\chi(\mathscr{C}_{g.n})=2g-2+n.\end{equation*}\end{df}
%In the moduli space $\overline{\mathscr{M}}_{g,n}$,the $\chi$-function is invariant. In the boundary point, the $\chi$- function is the summation over $\chi$-function value at each component. Thus the following lemma is to prove $I_{g,K,\beta}\leq C^{\chi}(\chi+\deg\beta)!$. Here $I_{g,K,\beta}$ is the maximum of the Saito-Givental correlator with genus $g$ and $K$ markings, $\beta$ as its monomial of $\psi$-classes. Thus we need to prove the following cases:
%\begin{enumerate}\item For primary potential, $I_{g,K}\leq C^{\chi}\chi!$.This includes $I_{0,K}\leq C^{K-2}(K-2)!$ and $I_{1,K}\leq C^{K}K!$. The first two lemmas already show they are true.\item The $\deg\beta$ term is restricted by $g,n$ here. In Gromov-Witten case, use (\ref{eq:degree g-red}), we have:\begin{equation*}2g-2\leq\deg\beta\leq 2g-2+n=\chi(\mathscr{C}_{g,n}).\end{equation*}Here $\mathscr{C}_{g,n}$ is the curve before pulling back. Thus here the number really matters is $K$, the number of markings with degree one insertion.\end{enumerate}
\begin{lm}
Using $g$-reduction, for $K=n+k$, $L=\sum_{i=1}^{n}l_i$, we have
\begin{equation*}
\Big|\LD\tau_{l_1}(\alpha_1),\cdots,\tau_{l_n}(\alpha_n),\be_{J^{-1}},\cdots,\be_{J^{-1}}\RD_{g,K}^{W}\Big|
\leq I_{g,K,L}\leq C^{2g-2+K+L}(2g-2+K+L)!
\end{equation*}
Here the constant $C$ is sufficiently large, and only depends increasingly on $g,n$.
\end{lm}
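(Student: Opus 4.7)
The plan is to run a triple induction on the lexicographic order of $(g, K, L)$, using the two already-established base cases $g=0$ (from the previous lemma on $I_{0,K}$) and $g=1$ (from the Getzler-relation lemma), and to reduce the general case to these via the $g$-reduction Lemma~\ref{lm:g-reduction} combined with the Splitting Axiom of the cohomological field theory.

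First I fix $g\geq 2$ and assume the bound holds for all triples $(g',K',L')$ with either $g'<g$, or $g'=g$ and $K'+L'<K+L$. The Selection rule forces $L\geq 2g-2\geq g$, so the monomial $\Psi_{g,n,L}\in H^{*}(\overline{\mathscr{M}}_{g,n})$ satisfies the hypothesis of Lemma~\ref{lm:g-reduction} and can be written as a $\mathbb{Q}$-linear combination
\begin{equation*}
\Psi_{g,n,L}=\sum_{\Gamma} a_{\Gamma}\,[\Gamma]\cdot\Psi_{\Gamma},
\end{equation*}
where each $\Gamma$ is a stable dual graph with at least one edge and $\Psi_{\Gamma}$ is a monomial in $\psi,\kappa$-classes on the components. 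The coefficients $a_{\Gamma}$ and the number of terms depend only on $(g,n,L)$; I absorb them into the constant $C$ by taking $C$ large enough at the end.

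Next I pull back to $\overline{\mathscr{M}}_{g,K}$ along $\pi_{n,k}$, distribute the $k$ extra insertions of $\be_{J^{-1}}$ over the components in all possible ways, and apply the Splitting Axiom to factor the integral of $\Lambda_{g,K}^{W}$ on each boundary stratum as a sum over a basis $\{\mu\}$ of $H^{*}_{FJRW}$ of products
\begin{equation*}
\int_{[\Gamma]}\pi^{*}(\Psi_{\Gamma})\cdot\Lambda_{g,K}^{W}
=\sum_{\text{distributions}}\sum_{\mu}\;\prod_{v\in V(\Gamma)}\bigl\langle\cdots\bigr\rangle_{g_v,K_v,L_v}^{W},
\end{equation*}
where each factor on the right has either strictly smaller genus, or the same genus with strictly fewer marked points (because $\Gamma$ has at least one edge), or strictly smaller $L_v$. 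Each such factor is bounded by $C^{2g_v-2+K_v+L_v}(2g_v-2+K_v+L_v)!$ by the inductive hypothesis, and the sum over $\mu$ contributes only a factor depending on $\dim H^{*}_{FJRW}$.

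The remaining work is the factorial bookkeeping. The number of ways to split the $k$ insertions of $\be_{J^{-1}}$ among the vertices of $\Gamma$ contributes multinomial coefficients of the form $\binom{k}{k_1,\ldots,k_{|V(\Gamma)|}}$, and summing the products of the corresponding factorials $\prod(2g_v-2+K_v+L_v)!$ against these multinomials yields, via the elementary identity $\sum_{a+b=k}\binom{k}{a}a!\,b!=(k+1)!$ (and its multi-factor generalization), a bound of the form $C'\cdot(2g-2+K+L-1)!$, with $C'$ depending only on $g,n$, and the bounded combinatorial data of the finitely many graphs $\Gamma$ arising from the $g$-reduction of a degree-$L$ monomial. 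Choosing $C$ large enough to absorb $C'$ and the fixed combinatorial factors closes the induction. The main obstacle is precisely this last step: one must verify that the multinomial-times-factorial sums over all decorated boundary strata collapse to the single factorial $(2g-2+K+L)!$, which requires tracking carefully that each term on the right-hand side has genus-weighted-Euler-characteristic strictly less than $2g-2+K+L$ and that the number of terms at each level grows only polynomially in the inductive parameters.
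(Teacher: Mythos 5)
Your proposal follows essentially the same route as the paper: $g$-reduction of the degree-$L$ monomial to boundary graphs, the Splitting Axiom, a binomial sum over distributions of the $k$ copies of $\be_{J^{-1}}$, and a Chu--Vandermonde-type identity to collapse the factorials; the paper additionally notes that the exponents of $C$ on the components sum to $2g-3+K+L$, so the spare factor of $C$ absorbs the graph count $N_{g,n}$.

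Two details are off, though both are repairable. First, your base cases are incomplete: the lemmas you cite for $g=0$ and $g=1$ bound only \emph{primary} correlators ($L=0$), so genus-$0$ and genus-$1$ correlators with $L\geq 1$ are not covered by them and must go through the same $g$-reduction step; the paper avoids this by inducting on $L$ with $L=0$ as the base case (where the selection rule forces $g\leq 1$ and the correlator is primary). Second, your claimed intermediate bound $C'\,(2g-2+K+L-1)!$ with $C'$ depending only on $g,n$ is too strong: the Chu--Vandermonde computation gives
$\sum_{k_1+k_2=k}\binom{k}{k_1}(A+k_1)!\,(B+k_2)! = \frac{A!\,B!}{(A+B+1)!}(A+B+k+1)!$ with $A+B+k+1=2g-2+K+L$, and the prefactor can be as large as $1/(A+B+1)$, so the honest bound is $(2g-2+K+L)!$ itself (not one factorial index lower uniformly in $k$); this still closes the induction because of the spare power of $C$ noted above. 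Your lexicographic ordering on $(g,K+L)$ does work, but only after invoking stability to rule out a genus-$0$ component with fewer than three special points (otherwise a component could carry $(g,K+1,L-1)$ with $K_1+L_1=K+L$).
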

\begin{proof}
%We assume $K=n+k$, where $k$ is the number of degree one insertions in the correlator.
For fixed $g$ and $n$, we can use induction on $L$. In case of $L=0$, non-vanishing correlators must be of genus-0 or genus-1, thus the estimation follows from two lemmas above.
For $L\geq1$, according to $g$-reduction, $\Psi_{l_1,\cdots,l_n}:=\prod_{i=1}^n\psi_i^{l_i}$ can be represented by a linear combination of dual graphs, each of which has at least one edge. The number of dual graphs is only depends on $g$ and $n$, but not $k$. We denote by $N_{g,n}$.
We can induct on the number of edges in the dual graph. For one edge case, if there are two components, $C_i$, $i=1,2$. component $C_i$ has genus $g_i$, $n_i$ number of insertions from the $n$ insertion, and the degree of $\psi,\kappa$ classes is $L_i$. Those are all fixed by the dual graph. However $k$ copies of insertion $\be_{J^{-1}}$ can be distributed to either $C_1$ or $C_2$. We denote the number of copies in $C_i$ by $k_i$.  Thus we have
$$g=g_1+g_2, n=n_1+n_2, k=k_1+k_2, L_1+L_2=L-1
$$
%$$(2g_1-2+n_1+k_1+1)+(2g_2-2+n_2+k_2+1)=2g-2+n+k$$
Now, $L_i<L$. By induction, we have the total bound
\begin{eqnarray*}
\begin{split}
&N_{g,n}\sum_{k_1=0}^k{k\choose k_1}\,I_{g_1,n_1+k_1+1,L_1}\,I_{g_2,n_2+k_2+1,L_2}\\
&\leq\,N_{g,n}C^{2g-3+n+k+L}\sum_{k_1=0}^k{k\choose\,k_1}\,(2g_1-1+n_1+k_1+L_1)!\,(2g_2-1+n_2+k_2+L_2)!\\
&=N_{g,n}C^{2g-3+n+k+L}\frac{(2g_1-1+n_1+L_1)!(2g_2-1+n_2+L_2)!}{(2g-2+n+L)!}(2g-2+n+k+L)!\\
&\leq\,C^{2g-2+n+k+L}(2g-2+n+k+L)!
\end{split}
\end{eqnarray*}
%Use $\chi$, we have $\chi_1+\chi_2+2=\chi$ and $k_1+k_2=k$. We can simplify the formula above as:\begin{eqnarray}\begin{split}&\frac{I_{g,n+k}}{C^{\chi+k}(\chi+k+\deg\beta)!}\\&\leq\,C_0\,\sum_{k_1=0}^k{k\choose k_1}\frac{I_{g_1,n_1+k_1+1}I_{g_2,n_2+k_2+1}}{(\chi+k+\deg\beta)!}\\&\leq\,C_0\,\sum_{k_1=0}^k{k\choose\,k_1}\frac{(\chi_1+k_1+\deg\beta_1)!(\chi_2+k_2+\deg\beta_2)!}{(\chi+k+\deg\beta)!}\\&\leq\,C_0\,\frac{(\chi_1+\deg\beta_1+1)!(\chi_2+\deg\beta_2+1)!}{(\chi+\deg\beta)!}\\&\leq1.\end{split}\end{eqnarray}
The second equality is using the Chu-Vandemonde equality:
\begin{equation}
_2 F_1(-k,b;c;1)=\frac{(c-b)_k}{(c)_k}=\frac{(c-b)(c-b+1)\cdots(c-b+k-1)}{(c)(c+1)\cdots(c+k-1)}
\end{equation}
where we set $b=2g_1+n_1+L_1, c=-k-2g_2-n_2-L_2+1$.
%\begin{eqnarray*}\begin{split}_2 F_1(-k,b;c;1)&=\sum_{k_1=0}^{\infty}\frac{1}{k_1!}\frac{(-k)_{k_1}(b)_{k_1}}{(c)_{k_1}}\\&=\sum_{k_1=0}^{k}\frac{1}{k_1!}\frac{k(k-1)\cdots(k+1-k_1)(A+1)\cdots(A+k_1)}{(k+B)\cdots(k+B+1-k_1)}\\&=\sum_{k_1=0}^{k}{k\choose k_1}\frac{(A+1)\cdots(A+k_1)}{(k+B)\cdots(k+B+1-k_1)}\\&=\frac{1}{A!(k+B)!}\sum_{k_1=0}^{k}{k\choose k_1}(k_1+A)!\,(k_2+B)!\end{split}\end{eqnarray*}
%Here $c=-k-B, b=A+1, A=2g_1-1+n_1+L_1, B=2g_2-1+n_2+L_2$, thus we have
%\begin{eqnarray*}\begin{split}&\sum_{k_1=0}^k{k\choose k_1}\,(2g_1-1+n_1+k_1+L_1)!\,(2g_2-1+n_2+k_2+L_2)!\\=&\sum_{k_1=0}^{k}{k\choose k_1}(k_1+A)!\,(k_2+B)!\\=&A!(k+B)!_2 F_1(-k,A+1;-k-B;1)\\=&\frac{A!B!}{(A+B+1)!}(k+A+B+1)!\end{split}\end{eqnarray*}
If there is just one component, then the total bound is
\begin{equation*}
N_{g,n}I_{g-1,K+2,L-1}\leq\,N_{g,n}C^{2g-2+K+L-1}(2g-2+K+L-1)!\leq C^{2g-2+K+L}(2g-2+K+L)!
\end{equation*}
\end{proof}
\begin{lm}
The FJRW ancestor correlator $\LD\LD\tau_{l_1}(\alpha_1),\cdots,\tau_{l_n}(\alpha_n)\RD\RD_{g,n}^{W,G}(s\be_{J^{-1}})$ is convergent at $s=0$.
\end{lm}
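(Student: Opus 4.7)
The plan is to deduce this convergence statement directly from the explicit factorial bound on correlators already established in the previous lemma, observing that the $1/k!$ factors built into the definition of the ancestor correlator function are precisely what is needed to tame that bound into a convergent power series. The key point is that the $s$-dependence arises only through the single specialization $\textbf{t}=s\be_{J^{-1}}$, so everything reduces to controlling one explicit power series in $s$.

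First, I unfold the definition of the ancestor correlator function at $\textbf{t}=s\be_{J^{-1}}$:
\begin{equation*}
\LD\LD\tau_{l_1}(\alpha_1),\cdots,\tau_{l_n}(\alpha_n)\RD\RD^{FJRW}_{g,n}(s\be_{J^{-1}}) = \sum_{k\geq 0}\frac{s^k}{k!}\LD\tau_{l_1}(\alpha_1),\cdots,\tau_{l_n}(\alpha_n),\underbrace{\be_{J^{-1}},\cdots,\be_{J^{-1}}}_{k\text{ copies}}\RD^{W,G_{\textrm{max}}}_{g,n+k}.
\end{equation*}
Since $\deg_W\be_{J^{-1}}=\hat{c}_W=1$, the selection rule (\ref{deg:FJRW}) becomes $\sum_i\deg_W(\alpha_i)+L=2g-2+n$, a condition on the frozen data which is either satisfied for all $k$ or for none; thus this is a genuine one-variable analytic question.

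Set $L:=\sum_{i=1}^n l_i$ and $M:=2g-2+n+L$, a nonnegative integer independent of $k$. Applying the previous lemma with $K=n+k$, the $k$-th coefficient is bounded in absolute value by $C^{M+k}(M+k)!$ for some constant $C$ depending only on $g$ and $n$. Using the elementary identity $(M+k)!/k! = M!\binom{M+k}{k}$, this yields the pointwise majorization
\begin{equation*}
\Big|\frac{s^k}{k!}\LD\tau_{l_1}(\alpha_1),\cdots,\tau_{l_n}(\alpha_n),\be_{J^{-1}},\cdots,\be_{J^{-1}}\RD^{W,G_{\textrm{max}}}_{g,n+k}\Big|\leq C^M M!\binom{M+k}{k}(Cs)^k.
\end{equation*}
Summing the standard binomial series $\sum_{k\geq 0}\binom{M+k}{k}z^k=(1-z)^{-M-1}$ then shows that the dominating series converges absolutely whenever $|Cs|<1$. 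Hence the original ancestor correlator function is analytic on the disc $|s|<1/C$, and in particular at $s=0$.

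No genuinely new difficulty is encountered at this final step: the substantive obstacle has already been overcome in the preceding lemma, whose proof combines $g$-reduction, Getzler's relation, and the WDVV-based genus-$0$ and genus-$1$ estimates into a $(2g-2+K+L)!$ growth bound for arbitrary ancestor correlators with $\be_{J^{-1}}$-insertions. Once that estimate is in hand, convergence at $s=0$ follows immediately by the elementary power-series comparison sketched above.
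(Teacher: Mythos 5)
Your proposal is correct and follows essentially the same route as the paper: both unfold the definition at $\textbf{t}=s\be_{J^{-1}}$, invoke the factorial bound $I_{g,n+k,L}\leq C^{2g-2+n+k+L}(2g-2+n+k+L)!$ from the preceding lemma, and conclude convergence of the resulting majorant series for $|s|$ small (the paper states the radius $1/(2C)$ without spelling out the ratio test, whereas you make the final step explicit via the binomial series $\sum_k\binom{M+k}{k}z^k=(1-z)^{-M-1}$). The only difference is presentational, and your added observation that the selection rule is insensitive to the number of $\be_{J^{-1}}$-insertions is a correct, if inessential, refinement.
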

\begin{proof}By definition and the previous lemma, we have
\begin{eqnarray*}
\begin{split}
&\Big|\LD\LD\tau_{l_1}(\alpha_1),\cdots,\tau_{l_n}(\alpha_n)\RD\RD_{g,n}^{W,G}(s\be_{J^{-1}})\Big|\\
&=\Big|\sum_{k=0}\frac{1}{k!}\LD\tau_{l_1}(\alpha_1),\cdots,\tau_{l_n}(\alpha_n),s\be_{J^{-1}},\cdots,s\be_{J^{-1}}\RD_{g,n+k}^{W,G}\Big|\\
%&\leq\sum_{k=0}^{\infty}I_{g,K,\beta}\frac{t^k}{k!}\\
&\leq\sum_{k=0}^{\infty}C^{2g-2+n+k+L}\frac{(2g-2+n+k+L)!}{k!}s^k\\
\end{split}
\end{eqnarray*}
Thus the ancestor function is convergent in $|s|<\frac{1}{2C}$.
\end{proof}
Now we prove the convergence of the FJRW part in Theorem \ref{thm:convergent}.
\begin{proof}
We can assume $\textbf{t}=s\be_{J^{-1}}+\sum_{i\geq0}t_i\phi_i$, where $\phi_i$ ranges over all elements in the basis except $\be_{J^{-1}}$. By Selection rule (\ref{deg:FJRW}), $\LD\LD\tau_{l_1}(\alpha_1),\cdots,\tau_{l_n}(\alpha_n)\RD\RD_{g,n}^{W,G}(\textbf{t})$ can be written as a polynomial of $t_i$, with each coefficient some FJRW ancestor function valued at $s\be_{J^{-1}}$. The degree of each monomial just depends on $g$ and $n$. Now the convergence is an easy consequence from the previous lemma.
\end{proof}

\section{LG-to-CY mirror theorem of all genera}

%{\bf This section should be divided into four sections: (1)Orbifold Gromov-Witten theory (Chen-Ruan cohomology, definition of
%orbifold Gromov-Witten invariants, axioms, orbifold RR and so on; (2)Reconstruction; (3)Convergence; (4)LG-to-CY mirror theorem.
%The same should be done next section for FJRW-theory.}

For more details about Gromov-Witten theory of orbifolds, we refer to \cite{AbGV}, \cite{ALR} and \cite{ChenR}.
\subsection{Chen-Ruan cohomology ring for elliptic orbifold $\mathbb{P}^1$}

We consider the elliptic singularities $W:=P_8,X_9,J_{10}$ from the previous section,
\begin{eqnarray}
\begin{split}
&P^T_8=P_8: x^3+y^3+z^3.\\
&X_{9}^T: x^2y+y^3+xz^2. \\
&J_{10}^T: x^3+y^3+xz^2.
\end{split}
\end{eqnarray}
The hypersurface $X_W$ in weighted projective space is defined by:
\begin{equation*}
X_W=\{W=0\}\subseteq \mathbb{P}^2
\end{equation*}
The first Chern class $c_1(TX_W)=0$. Thus $X_W/\widetilde{G}_{\textrm{max}}$ is an elliptic orbifold $\mathbb{P}^1$. We denote those orbifolds by $\mathcal{X}:= \mathbb{P}^1_{3,3,3}, \mathbb{P}^1_{4,4,2}, \mathbb{P}^1_{6,3,2}$. More precisely, we have:
\begin{eqnarray*}
&\mathbb{P}^1_{3,3,3}&=\{x^3+y^3+z^3=0\}/\widetilde{G}_{\textrm{max}}\\
&\mathbb{P}^1_{4,4,2}&=\{x^2y+y^3+xz^2=0\}/\widetilde{G}_{\textrm{max}}\\
&\mathbb{P}^1_{6,3,2}&=\{x^3+y^3+xz^2=0\}/\widetilde{G}_{\textrm{max}}.
\end{eqnarray*}
It is easy to see that the $j$-invariant of the three hypersurface is 0,1728 and 0 respectively. On the other hand, $\widetilde{G}_{\textrm{max}}$ are $\mathbb{Z}/3\mathbb{Z}\times\mathbb{Z}/3\mathbb{Z},\mathbb{Z}/4\mathbb{Z}$ and $\mathbb{Z}/6\mathbb{Z}$ in each case.
%\begin{rem}In \cite{SatT}, I.Satake and A.Takahashi also computed the Frobenius manifold structure for the elliptic orbifold $\mathbb{P}^1_{2,2,2,2}$ and $\mathbb{P}^1_{3,3,3}$ independently.\end{rem}
Next, we consider the Chen-Ruan cohomology for the three orbifolds. We recall that the \emph{Chen-Ruan cohomology} of $X_W/\widetilde{G}_{\textrm{max}}$, denoted as $H^*_{CR}(X_W/\widetilde{G}_{\textrm{max}})$, is defined to be the cohomology of its \emph{inertial orbifold} $\wedge (X_W/\widetilde{G}_{\textrm{max}})$: $H^*(\wedge (X_W/\widetilde{G}_{\textrm{max}}))$.

In those cases, the Chen-Ruan cohomology has a canonical set of basis $\mathscr{B}$:
\begin{equation*}
1\in H^0_{CR}(\mathcal{X}),\Delta_{x,1},\cdots,\Delta_{x,p},\Delta_{y,1},\cdots,\Delta_{y,q},\Delta_{z,1},\cdots,\Delta_{z,r}, \cal{P}.
\end{equation*}
where $p,q,r$ are the orders of the three orbifold points, and $\cal{P}$ is the generator of $H^2_{CR}(\mathcal{X})$.
The complex degree of $1$ is zero and the complex degree of $\cal{P}$ is one. The degrees of other cases are shifted. Their complex degrees are
$$\deg\Delta_{x,i}=\frac{i}{p},\deg\Delta_{y,i}=\frac{i}{q}, \deg\Delta_{z,i}=\frac{i}{r}.$$

\subsubsection{Dimension, String, Divisor and WDVV}

$\overline{\mathscr{M}}_{g,n}(\mathcal{X},\beta)$ is the moduli space of stable maps from genus-$g$, $n$-points orbi-curve to $\mathcal{X}$ with the push-forward of the fundamental class is $\beta$, which lies in the Mori cone of the homological classes of effective 1-cycles. In our case, as $\mathcal{X}$ is 1-dimensional, we can write $\beta=d\cal{P}$, for $d\geq0$. We sometimes use $d$ to replace $\beta$. The virtual dimension of $\overline{\mathscr{M}}_{g,n}(\mathcal{X},\beta)$ is
\begin{equation}\label{form:GW dim}
\textrm{vir}\dim_{\mathbb{C}}\overline{\mathscr{M}}_{g,n}(\mathcal{X},\beta)=(3-1)(g-1)+n+c_1(T\mathcal{X})\cdot\beta=2g-2+n
\end{equation}

The \emph{Gromov-Witten ancestor correlators} are defined as integrals over $\overline{\mathscr{M}}_{g,n}(\mathcal{X},\beta)$:
\begin{equation}
\LD\tau_{l_1}(\alpha_1),\cdots,\tau_{l_n}(\alpha_n)\RD_{g,n,\beta}^{\mathscr{X}}
=\int_{\big[\overline{\mathscr{M}}_{g,n}(\mathcal{X},\beta)\big]^{\mathrm{vir}}}\prod_{i=1}^n\bar{\psi_{i}}^{l_i} ev^*_i(\alpha_i)
\end{equation}
Here $\alpha_i\in H^*_{CR}(\mathcal{X})$. By multi-linearity, we can always choose $\alpha_i$ from the canonical set $\mathscr{B}$. $\bar\psi_{i}:=\pi^*(\psi_i)$, with
$\pi: \overline{\mathscr{M}}_{g,n}(\mathcal{X},\beta)\rightarrow\overline{\mathscr{M}}_{g,n}.$
And $\textrm{ev}_i:\overline{\mathscr{M}}_{g,n}(\mathcal{X},\beta)\longrightarrow\wedge\mathcal{X}$ is evaluation map to the $i$-th marked point.

With formal variables $t_{i,l_i}, q$ and $\hbar$, the \emph{genus-$g$ Gromov-Witten potential} is
\begin{equation}
\cal{F}_g^{GW}(\mathcal{X})
=\sum_{n}\sum_{l_i}\sum_{\alpha_i}\sum_{\beta}\LD\tau_{l_1}(\alpha_1),\cdots,\tau_{l_n}(\alpha_n)\RD_{g,n,\beta}\frac{\prod_{i=1}^n t_{i,l_i}}{\prod_{i=1}^n l_i !}q^{\beta}
\end{equation}
The \emph{total Gromov-Witten ancestor potential} is
\begin{equation}
\cal{A}_{GW}(\mathcal{X})=\exp\Big(\sum_{g\geq0}\hbar^{2g-2}\cal{F}_g^{GW}(\mathcal{X})\Big)
\end{equation}
The \emph{genus-$g$ ancestor correlator function} is
\begin{equation*}
\LD\LD\tau_{l_1}(\alpha_1),\cdots,\tau_{l_n}(\alpha_n)\RD\RD_{g,n}(\textbf{t}):
=\sum_{k\geq0,\beta}\frac{q^{\beta}}{k!}
\int_{\big[\overline{\mathscr{M}}_{g,n+k}(\mathcal{X},\beta)\big]^{\mathrm{vir}}}\prod_{i=1}^n\bar\psi_{i}^{l_i} ev^*_i(\alpha_i)\prod_{i=n+1}^{n+k}ev^*_i(\textbf{t})
\end{equation*}
Ancestor correlators satisfy WDVV, string, dilaton and divisor equations as well. For example, the divisor equation is
\begin{equation}\label{eq:divisor}
\LD\tau_{l_1}(\gamma_1)\,\cdots\,\tau_{l_n}(\gamma_n),\cal{P}\RD_{g,n+1,d}=d\,\LD\tau_{l_1}(\gamma_1)\,\cdots\,\tau_{l_n}(\gamma_n)\RD_{g,n,d}
\end{equation}

The \emph{genus-$g$ ancestor potential function} is
\begin{equation*}
\cal{F}_g^{GW}(\textbf{t}):=\sum_{n}\sum_{\alpha_1,\cdots,\alpha_n}\frac{1}{n!}\LD\LD\tau_{l_1}(\alpha_1),\cdots,\tau_{l_n}(\alpha_n)\RD\RD_{g,n}(\textbf{t})
\end{equation*}
The \emph{total ancestor potential function} is
\begin{equation*}
\cal{A}_{GW}(\hbar;\textbf{t}):=\exp\sum_{g=0}^{\infty}\hbar^{2g-2}\cal{F}_g^{GW}(\textbf{t})
\end{equation*}
%In \cite{Ka}, Kawasaki's proved the Hirzebruch-Riemann-Roch Theorem for complex orbifold and holomorphic orbi-bundles. Recall the Picard group of the weighted projective space are generated by $\cal{L}_x, \cal{L}_y, \cal{L}_z.$For an orbifold stable map $f:\mathscr{C}_{g,n}\longrightarrow\mathcal{X}$, using Kawasaki's Hirzebruch-Riemann-Roch Theorem, the holomorphic Euler characteristic of the orbibundle $\cal{L}_i$ is
%\begin{equation}\label{eq:Kawasaki}\chi(\mathscr{C}_{g,n},f^*\cal{L}_i)=1-g+\deg(f^*\cal{L}_i)-\sum_{j=1}^{n}\frac{d_{i,j}}{m_i}\end{equation}Where$d_{i,j}$is the order at the j-th marked point, $m_i$ is the index of the orbibundle $\cal{L}_i$.

%For $(\beta,n)$ is not of type $\beta=0, n<3$, we have genus-0 String Equation:\begin{equation}\label{eq:string}\LD \gamma_1,\cdots,\gamma_n,1\RD_{0,n+1,\beta}^{\mathcal{X}}=0\end{equation}
%Assume $(\beta,g,n)$ is not of type $\beta=0,g=1,n=0$. For $\gamma\in H^2(\mathcal{X})\subset H^2_{orb}(\mathcal{X})$, we have Divisor Equation:\begin{equation}\label{eq:string}\LD \gamma_1,\cdots,\gamma_n,\gamma\RD_{0,n+1,\beta}^{\mathcal{X}}=\LD \gamma_1,\cdots,\gamma_n\RD_{0,n,\beta}^{\mathcal{X}}\int_{\beta}\gamma.\end{equation}
\subsubsection{Chen-Ruan product}

For $\alpha=(p,g)\in H_{CR}^*(\mathscr{X})$, we denote $\textrm{Supp}(\alpha)=p$, $\alpha'=(p,g^{-1})$ and $\alpha^{i}=(p,g^i)$. We also denote by $|\alpha|$ the order of isotropy group for the stacky point fixed by $\alpha$.
For $\alpha,\beta\in H_{CR}^*(\mathscr{X})$, the paring is
$$\LD\alpha,\beta\RD=\frac{1}{|\alpha|}\delta_{\alpha,\beta'}.
$$
The Chen-Ruan product is defined by the relation
$$\LD\alpha\star \beta, \gamma\RD
=\LD\alpha, \beta, \gamma\RD_{0,3,0}.$$
The Chen-Ruan product can be easily calculated. Except for those from the pairing, they are
\begin{equation}\label{eq:Chen-Ruan}
\LD\alpha,\beta,\gamma\RD_{0,3,0}=\left\{
\begin{array}{ll}
|\alpha|^{-1},&\alpha,\beta,\gamma \text{ have the same support and total degree is 1};\\
0,&\text{other cases}.
\end{array}
\right.
\end{equation}
Thus $\alpha\star\beta\neq0$ if and only if $\textrm{Supp}(\alpha)=\textrm{Supp}(\beta)$ and $\deg\alpha+\deg\beta\leq1$.
Hence for primitive $\alpha$, $\alpha^i$ is the Chen-Ruan product of $i$-copies of $\alpha$, $\alpha^i=\alpha\star\cdots\star\alpha$.
%They are classified as follows up to symmetry:\begin{equation*}\left\{\begin{array}{ll}\langle\Delta_{x,1},\Delta_{x,1},\Delta_{x,1}\rangle^{\mathbb{P}^1_{3,3,3}}_{0,3,0}=\frac{1}{3},&\\\langle\Delta_{x,2},\Delta_{x,1},\Delta_{x,1}\rangle^{\mathbb{P}^1_{4,4,2}}_{0,3,0}=\frac{1}{4},&\\\langle\Delta_{y,1},\Delta_{y,1},\Delta_{y,1}\rangle_{0,3,0}^{\mathbb{P}^1_{6,3,2}}=\frac{1}{3},&\langle\Delta_{x,i},\Delta_{x,j},\Delta_{x,k}\rangle_{0,3,0}^{\mathbb{P}^1_{6,3,2}}=\frac{1}{6},\,i+j+k=6.\end{array}\right.\end{equation*}

\subsection{Reconstruction}

Our method of reconstructing Gromov-Witten theory is similar to that of FJRW-theory described in the last section. However, Gromov-Witten
theory has an additional parameter-degree. Every step of the reconstruction becomes more delicate. Since we need it for the proof of convergence,
we reproduce the details here.

The general idea is the similar as that of FJRW-theory.
We use WDVV, string and divisor equation (which does not exists in FJRW-theory), to reconstruct genus-0 primary correlators from genus-0 $n$-point basic correlators, with $n\leq 4$.
We define
\begin{df}
We call a class $\gamma$ \emph{primitive} if it cannot be written as $\gamma=\gamma_1\star\gamma_2$ for $0 < \deg\gamma_1 < \deg\gamma.$
\end{df}
\begin{df}
We call a correlator \emph{basic} if there are no insertions of $1,\cal{P}$ and at most two non-primitive insertions.
\end{df}
\begin{df}
We call a genus-0 primary correlator is \emph{reconstructable} if it can be expressed by linear combinations of products of Chen-Ruan product structural constants and $\LD\Delta_{x,1},\Delta_{y,1},\Delta_{z,1}\RD_{0,3,1}^{\mathscr{X}}$, only using WDVV, string and divisor equation.
\end{df}

We first recall the WDVV equation for elliptic orbifold $\mathbb{P}^1$. Set $S=\{1,\cdots,n\}$, $n\geq1$, for $d\geq1$, we have:
\begin{equation}\label{eq:WDVV-GW}
\begin{split}
\LD\gamma_1,\gamma_2,&\delta_S,\gamma_3\star\gamma_4\RD_{0,n+3,d}=I_0+I_1+I_2+I_3
\end{split}
\end{equation}
where
\begin{eqnarray*}
\begin{split}
I_0=&\LD\gamma_1,\gamma_3,\delta_{S},\gamma_2\star\gamma_4\RD_{0,n+3,d}+\LD\gamma_1\star\gamma_3,\delta_{S},\gamma_2,\gamma_4\RD_{0,n+3,d}-\LD\gamma_1\star\gamma_2,\delta_{S},\gamma_3,\gamma_4\RD_{0,n+3,d}\\
I_1=&\sum_{\gamma_2\rightleftarrows\gamma_3}\textrm{Sign}(\gamma_2,\gamma_3)\sum_{\begin{subarray}{l}A\sqcup\,B=S(n)\\A,B\neq\emptyset,i=0,d\end{subarray}}\Big(\LD\gamma_1,\gamma_3,\delta_A,\mu\RD_{0,|A|+3,d-i}\eta^{\mu,\nu}\LD\nu,\delta_B,\gamma_2,\gamma_4\RD_{0,n+3-|A|,i}\Big)\\
I_2=&\sum_{\gamma_2\rightleftarrows\gamma_3}\textrm{Sign}(\gamma_2,\gamma_3)\sum_{\begin{subarray}{l}A\sqcup\,B=S(n)\\0<i<d\end{subarray}}\LD\gamma_1,\gamma_3,\delta_A,\mu\RD_{0,|A|+3,d-i}\eta^{\mu,\nu}\LD\nu,\delta_B,\gamma_2,\gamma_4\RD_{0,n+3-|A|,i}\\
I_3=&\sum_{\gamma_2\rightleftarrows\gamma_3}\textrm{Sign}(\gamma_2,\gamma_3)
\Big(\LD\gamma_1,\gamma_3,\mu\RD_{0,3,d}\eta^{\mu,\nu}\LD\nu,\delta_{S},\gamma_2,\gamma_4\RD_{0,n+3,0}+\LD\gamma_1,\gamma_3,\delta_{S},\mu\RD_{0,n+3,0}\eta^{\mu,\nu}\\
&\cdot\LD\nu,\gamma_2,\gamma_4\RD_{0,3,d}\Big)
\end{split}
\end{eqnarray*}
Note that for $d=0$, the WDVV equation is modified to be
\begin{eqnarray}\label{eq:WDVV-GW d=0}
\begin{split}
&\LD\gamma_1,\gamma_2,\delta_S,\gamma_3\star\gamma_4\RD_{0,n+3,0}\\
=&\LD\gamma_1,\gamma_3,\delta_{S},\gamma_2\star\gamma_4\RD_{0,n+3,0}+\LD\gamma_1\star\gamma_3,\delta_{S},\gamma_2,\gamma_4\RD_{0,n+3,0}-\LD\gamma_1\star\gamma_2,\delta_{S},\gamma_3,\gamma_4\RD_{0,n+3,0}
\end{split}
\end{eqnarray}
Once we have more than two non-primitive insertions, we can choose $\gamma_1,\gamma_2, \gamma_3\star\gamma_4$ to be these, where $\gamma_4=\gamma_3^i$ for some $1\leq\,i\leq|\gamma_3|-2$.
If there are other nonprimitive insertions with different fixed points, we can choose $\gamma_1,\gamma_2$ to be these insertions. Otherwise, we choose the smallest degree nonprimitive insertion to be $\gamma_3^{i+1}$.
\begin{enumerate}
\item For $\mathbb{P}^1_{3,3,3}$, each primitive insertion has degree $1/3$, each term in $I_0$ either vanishes or has an insertion $\cal{P}$.
\item For $\mathbb{P}^1_{4,4,2}$, each primitive insertion has degree $1/4$.
\item For $\mathbb{P}^1_{6,3,2}$, each primitive insertion has degree $1/3$, or $1/6$. For the $1/3$ case, it is the same as in $\mathbb{P}^1_{3,3,3}$.
\end{enumerate}

\subsubsection{Recursion for genus-0 3-point and 4-point basic correlators}
Now we classify the genus-0 3-point or 4-point basic correlators into six types. Here $\alpha,\beta,\gamma,\xi$ are all primitive elements:
\begin{enumerate}
\item [Type 1]: $\LD\alpha,\beta^{j},\gamma,\xi^{i+1}\RD_{0,4,d}$, $i,j\geq1$, supports are not the same point.
\item [Type 2]: $\LD\Delta_{x,1},\Delta_{y,1},\Delta_{z,1}\RD_{0,3,d}$.
\item [Type 3]: $\LD\gamma,\gamma,\gamma',\gamma'\RD_{0,4,d}$, $|\gamma|$ is greatest among all primitive elements.
\item [Type 4]: $\LD\alpha,\beta^i,\beta^j\RD_{0,3,d}$, $\alpha\neq\beta$.
\item [Type 5]: $\LD\beta,\beta,\beta',\beta'\RD_{0,4,d}$, $|\beta|=3$ in case of $\mathbb{P}^1_{6,3,2}$ or $|\beta|=2$.
\item [Type 6]: $\LD\alpha,\alpha^i,\alpha^j\RD_{0,3,d}$.
\end{enumerate}

%Let's first calculate the basic correlators in the theorem. They are the initial conditions of reconstruction.The genus-0 degree one three-point correlators above can be computed by definitions. Take $\langle\Delta_{x,1},\Delta_{x,1},\Delta_{x,1}\rangle^{\mathscr{X}_8}_{0,3,0}$ as an example, $ev^*_1(\Delta_{x,1})\wedge ev^*_2(\Delta_{x,1})\wedge ev^*_3(\Delta_{x,1})$ is supported on$(x,\omega,\omega,\omega)$, $\overline{\mathscr{M}_3}$ has this component with $\mathbb{Z}/3\mathbb{Z}$ acting trivially, thus
%$$\langle\Delta_{x,1},\Delta_{x,1},\Delta_{x,1}\rangle^{\mathscr{X}_8}_{0,3,0}=\int_{\overline{\mathscr{M}_3}(\mathscr{G})}ev^*_1(\Delta_{x,1})\wedge ev^*_2(\Delta_{x,1})\wedge ev^*_3(\Delta_{x,1})=\frac{1}{3}$$
\begin{lm}For all $\mathscr{X}=\mathbb{P}^1_{3,3,3},\mathbb{P}^1_{4,4,2},\mathbb{P}^1_{6,3,2},$ we have
\begin{equation}\label{eq:n=3,d=1}
\LD\Delta_{x,1},\Delta_{y,1},\Delta_{z,1}\RD_{0,3,1}^{\mathscr{X}}=1.
\end{equation}
\end{lm}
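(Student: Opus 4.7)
The plan is to compute the degree-$1$ three-point correlator directly from the geometry of the moduli space $\overline{\mathscr{M}}_{0,3}(\mathscr{X},1)$ restricted to the twisted-sector data $(\Delta_{x,1},\Delta_{y,1},\Delta_{z,1})$. First I would identify this moduli space set-theoretically. A representable stable orbifold map of degree $1$ has an underlying coarse map of degree $1$ from a rational curve to the coarse $\mathbb{P}^1$, hence is an isomorphism on the coarse level. The representability condition together with the prescribed twisted sectors forces the source orbi-curve to have cyclic isotropy of orders $p,q,r$ at the three marked points, with distinguished generators $e^{2\pi i/p},e^{2\pi i/q},e^{2\pi i/r}$. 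Consequently the source is isomorphic to $\mathscr{X}$ itself, and the unique representable lift of the coarse isomorphism matching the marked points to the stacky points is the identity map. Thus the moduli space is a single reduced point.

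Next I would verify the virtual dimension agrees with the actual dimension. Using the age-corrected orbifold Riemann-Roch,
\begin{equation*}
\mathrm{vir}\dim_{\mathbb{C}} = (1-g)(\dim\mathscr{X}-3)+n+c_1(T\mathscr{X})\cdot\beta-\sum_i\mathrm{age}(\gamma_i) = -2+3+0-\Big(\tfrac{1}{p}+\tfrac{1}{q}+\tfrac{1}{r}\Big)=0,
\end{equation*}
using $c_1(T\mathscr{X})=0$ (Calabi-Yau) and $\tfrac{1}{p}+\tfrac{1}{q}+\tfrac{1}{r}=1$ (elliptic). Since $f$ is an isomorphism, $f^*T\mathscr{X}\cong T\mathcal{C}$, and $\chi^{orb}(\mathcal{C})=0$ means $T\mathcal{C}$ has orbifold degree zero; a cohomological computation incorporating the three marked-point conditions shows the deformation-obstruction complex is acyclic, so the virtual class coincides with the usual fundamental class of the point.

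Finally I would compute the automorphism group of the stable map $(\mathcal{C},\mathrm{id})$. Since three distinct marked points on $\mathbb{P}^1$ admit no non-trivial automorphism on the coarse curve, and since the orbifold structure and the twisted-sector labels $\Delta_{*,1}$ pin down the isotropy generators at each stacky point, the automorphism group of the map is trivial. Therefore $\int_{[\mathrm{pt}]^{\mathrm{vir}}}1=1$, yielding the claim for each of $\mathbb{P}^1_{3,3,3}$, $\mathbb{P}^1_{4,4,2}$, and $\mathbb{P}^1_{6,3,2}$.

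The main obstacle I anticipate is the careful bookkeeping of automorphisms and inertia on the moduli stack of twisted stable maps, together with the explicit verification that the obstruction sheaf vanishes after imposing the marked-point conditions; both steps are standard in orbifold Gromov-Witten theory but demand care in the present setting. As a consistency check, the value $1$ can also be read off from the Chen-Ruan-to-Milnor-ring comparison established in Section~3 combined with the Frobenius algebra isomorphisms $\Psi_W$, providing an alternative verification independent of the geometric argument above.
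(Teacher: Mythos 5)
Your proposal is correct, and it arrives at the count by an argument that is geometrically equivalent to, but packaged differently from, the one in the paper. The paper disposes of this lemma in two sentences by invoking the description (from Adem--Leida--Ruan) of twisted stable maps to the global quotient $[E/(\mathbb{Z}/3\mathbb{Z})]=\mathbb{P}^1_{3,3,3}$ as principal $\mathbb{Z}/3\mathbb{Z}$-bundles over the source orbicurve together with an equivariant map to the genus-one cover $E$; with the prescribed twisted sectors and degree one there is exactly one equivalence class, whence the value $1$. You instead work downstairs: you identify $\overline{\mathscr{M}}_{0,3}(\mathscr{X},1)$ with the given sector data as the single reduced point given by the identity map, and then check that the virtual dimension is zero, that the deformation--obstruction theory is acyclic (indeed $H^0(T\mathscr{X})=H^1(T\mathscr{X})=0$ since $T\mathscr{X}$ has orbifold degree zero with nontrivial isotropy weights), and that the automorphism group is trivial. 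The two routes buy essentially the same thing; yours makes the unobstructedness explicit where the paper's covering-space count implicitly absorbs it, at the cost of the deformation-theoretic bookkeeping you acknowledge. One point worth pinning down in your write-up is the normalization of the degree: that $d=1$ really is coarse degree one follows from the paper's conventions, since the divisor equation forces $d=\int_\beta\cal{P}$ and the pairing $\LD 1,\cal{P}\RD=1$ normalizes $\cal{P}$ to have total integral one. Your closing consistency check via the Frobenius-algebra isomorphisms should be treated only as a sanity check, not a proof, since the mirror identification of the $q$-expansions in Section 4 itself takes this lemma as an input to the reconstruction.
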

\begin{proof}
For $\langle\Delta_{x,1},\Delta_{y,1},\Delta_{z,1}\rangle^{\mathbb{P}^1_{3,3,3}}_{0,3,1}$, as in \cite{ALR}, we consider the R-equivalence class of principal $\mathbb{Z}/3\mathbb{Z}$-bundles over orbifold $\mathbb{P}^1_{3,3,3}$ with a $\mathbb{Z}/3\mathbb{Z}$-equivariant map to a genus-1 curve. There is just one such equivalent class, thus $\langle\Delta_{x,1},\Delta_{y,1},\Delta_{z,1}\rangle^{\mathbb{P}^1_{3,3,3}}_{0,3,1}=1.$
The other two cases are obtained similarly.
\end{proof}
Now we start to reconstruct the genus-0 4-point correlators with degree $0$,
\begin{lm}\label{lm:vanish n=4}
For Type 1 correlators, WDVV equation implies
\begin{equation}\label{eq:vanish n=4}
\LD\alpha,\beta^{j},\gamma,\xi^{i+1}\RD_{0,4,0}=0.
\end{equation}
Type 3 correlators $\LD\gamma,\gamma,\gamma',\gamma'\RD_{0,4,0}$ can be reconstructed from Chen-Ruan product and $\LD\Delta_{x,1},\Delta_{y,1},\Delta_{z,1}\RD_{0,3,1}$,
\begin{equation}\label{eq:nonvanish n=4}
\LD\gamma,\gamma,\gamma',\gamma'\RD_{0,4,0}=-|\gamma|^{-2}.
\end{equation}
\end{lm}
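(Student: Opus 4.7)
The two assertions will be proved by the WDVV equations \eqref{eq:WDVV-GW} and \eqref{eq:WDVV-GW d=0} combined with the explicit Chen--Ruan product rules \eqref{eq:Chen-Ruan}, and, for \eqref{eq:nonvanish n=4}, with the evaluation \eqref{eq:n=3,d=1}. The underlying mechanism is the rigidity of the Chen--Ruan product on these three elliptic orbifolds: $\alpha \star \beta = 0$ unless $\mathrm{Supp}(\alpha)=\mathrm{Supp}(\beta)$ and $\deg\alpha + \deg\beta \leq 1$, and $\alpha \star \beta = |\alpha|^{-1}\cal{P}$ when $\deg\alpha + \deg\beta = 1$.

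For \eqref{eq:vanish n=4}, my plan is to apply the $d=0$ WDVV equation \eqref{eq:WDVV-GW d=0} with the insertion split $\xi^{i+1} = \xi \star \xi^{i}$, i.e.\ $\gamma_3 = \xi$, $\gamma_4=\xi^{i}$, together with $\gamma_1=\alpha$, $\gamma_2=\beta^{j}$, and $\delta_S=\{\gamma\}$. Each of the three RHS terms contains an explicit Chen--Ruan product among $\{\alpha,\beta^{j},\gamma,\xi,\xi^{i}\}$. Since by hypothesis the four supports are not all equal, at least two of them must differ, and a short case analysis over the possible support patterns shows that in every term at least one Chen--Ruan product factor vanishes by support mismatch or by $\deg > 1$. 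When the particular split $\xi^{i+1}=\xi\star\xi^i$ fails to kill everything (because $\xi$ shares support with another primitive class), one switches to an analogous split $\beta^{j}=\beta\star\beta^{j-1}$ or permutes the roles of $\gamma_1,\gamma_2,\delta_S$ to cover that case.

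For \eqref{eq:nonvanish n=4}, the strategy is to apply the full WDVV equation \eqref{eq:WDVV-GW} at $d=1$ to a degree-$1$ four-point correlator chosen so that the degree-splitting term $I_3$ produces both the target $d=0$ correlator and the universal invariant \eqref{eq:n=3,d=1}. Concretely I would consider $\langle \gamma,\gamma',\gamma,\,\gamma\star\gamma'\rangle_{0,4,1}$. Using $\gamma\star\gamma'=|\gamma|^{-1}\cal{P}$ and the divisor equation \eqref{eq:divisor}, the LHS equals $|\gamma|^{-1}\langle\gamma,\gamma',\gamma\rangle_{0,3,1}$, which vanishes because $(m+1)/m \neq 1$ in the degree selection rule. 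Expanding the RHS of \eqref{eq:WDVV-GW}, the pieces $I_0$, $I_1$, $I_2$ collapse: $I_1$ and $I_2$ are empty since $|S|=1$ leaves no room for a nontrivial degree split, while the three constituents of $I_0$ vanish by degree-selection, by \eqref{eq:vanish n=4}, or by the divisor equation applied at $d=1$. The remaining contribution from $I_3$, after summing over the intermediate basis $\mu,\nu$, leaves essentially one nonzero product, of the form $\langle\Delta_{x,1},\Delta_{y,1},\Delta_{z,1}\rangle_{0,3,1}\cdot\langle \gamma,\gamma,\gamma',\gamma'\rangle_{0,4,0}$ multiplied by the pairing normalization $|\gamma|^{-1}$. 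Setting this equal to $0$ and invoking \eqref{eq:n=3,d=1} yields $\langle \gamma,\gamma,\gamma',\gamma'\rangle_{0,4,0}=-|\gamma|^{-2}$.

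The main obstacle will be the bookkeeping in the $I_3$ computation: the intermediate sum over $(\mu,\nu)$ ranges over the full Chen--Ruan basis, and I must verify that every pairing except the one producing \eqref{eq:n=3,d=1} is either killed by support mismatch (so that the inner 3-point or 4-point factor is zero) or cancels against a symmetric partner in the $\gamma_2\leftrightarrows\gamma_3$ sum. A secondary subtlety, specific to the choice that $|\gamma|$ be maximal, is that one must ensure the 3-point $d=1$ factor that appears is exactly $\langle\Delta_{x,1},\Delta_{y,1},\Delta_{z,1}\rangle_{0,3,1}$ and not some other potentially nonzero $d=1$ three-point correlator; this is where the maximality hypothesis and the degree constraint $2\deg\gamma + \deg(\mu) = 1$ combine to force $\mu$ to sit at a different orbifold point from $\mathrm{Supp}(\gamma)$.
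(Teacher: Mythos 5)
Your treatment of the Type 1 vanishing is essentially the paper's argument, though stated too strongly: it is not true that in every term of the $d=0$ WDVV relation a Chen--Ruan product factor dies outright (e.g.\ when $\beta=\xi$ the term $\LD\alpha,\xi^{i},\gamma,\beta^{j}\star\xi\RD_{0,4,0}=\LD\alpha,\xi^{i},\gamma,\xi^{j+1}\RD_{0,4,0}$ survives), so one must iterate the WDVV relation, raising the exponent on $\xi$ until either a $\cal{P}$-insertion appears (killed by the divisor equation at $d=0$) or the dimension axiom is violated. Your hedge about ``switching splits'' gestures at this, and with that iteration made explicit the first half is fine.

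The Type 3 argument, however, has a genuine gap: the WDVV instance $\LD\gamma,\gamma',\gamma,\gamma\star\gamma'\RD_{0,4,1}$ is degenerate and yields only $0=0$. Since all four external insertions are supported at the single point $\textrm{Supp}(\gamma)$, every $3$-point factor $\LD\gamma_1,\gamma_3,\mu\RD_{0,3,1}$ and $\LD\nu,\gamma_2,\gamma_4\RD_{0,3,1}$ occurring in $I_3$ carries two insertions at that same point, so the invariant $\LD\Delta_{x,1},\Delta_{y,1},\Delta_{z,1}\RD_{0,3,1}$ (one insertion at each of the three orbifold points) can never arise, contrary to your claim. Worse, each product in $I_3$ vanishes identically by the dimension axiom: e.g.\ the factor $\LD\gamma,\gamma,\mu\RD_{0,3,1}$ forces $\deg\nu=2/|\gamma|$ while $\LD\nu,\gamma,\gamma',\gamma'\RD_{0,4,0}$ forces $\deg\nu=1/|\gamma|$, and similarly for the other piece; combined with $I_0=I_1=I_2=0$ and the vanishing of the left-hand side, the relation carries no information and the target correlator appears with coefficient zero. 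The paper instead applies WDVV to the mixed-support correlator $\LD\alpha,\beta,\gamma,\gamma\star\gamma'\RD_{0,4,1}$, where $\alpha,\beta$ are the primitive classes at the \emph{other two} orbifold points: then the left-hand side equals $|\gamma|^{-1}\LD\alpha,\beta,\gamma\RD_{0,3,1}=|\gamma|^{-1}$ by the divisor equation and (\ref{eq:n=3,d=1}), the terms $I_0,I_1,I_2$ vanish, and $I_3$ reduces to $-\LD\alpha,\beta,\gamma\RD_{0,3,1}\,\eta^{\gamma,\gamma'}\,\LD\gamma',\gamma,\gamma,\gamma'\RD_{0,4,0}=-|\gamma|\LD\gamma,\gamma,\gamma',\gamma'\RD_{0,4,0}$, which gives $\LD\gamma,\gamma,\gamma',\gamma'\RD_{0,4,0}=-|\gamma|^{-2}$. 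You need to replace your chosen correlator by this one (and then carry out the support/dimension bookkeeping you describe, which does go through in that setting).
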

\begin{proof}
For Type 1 correlator, if there are three primitive insertions, i.e. $j=1$, then it is either
$\LD\Delta_{z,1},\Delta_{y,1},\Delta_{y,1},\Delta_{x,4}\star\Delta_{x,1}\RD_{0,4,0}^{\mathbb{P}^1_{2,3,6}}$, or
$\LD\Delta_{z,1},\Delta_{z,1},-,\xi^i\star\xi\RD_{0,4,0}^{\mathscr{X}}$.
Then applying WDVV equation (\ref{eq:WDVV-GW d=0}), they will vanish.

For other cases, i.e. $j\geq2$, we can assume $j>i$ if $\beta=\xi$. According to dimension axiom, we will always have
\begin{equation}
\deg\alpha\geq\deg\gamma, \alpha\neq\xi
\end{equation}
We apply WDVV equation (\ref{eq:WDVV-GW d=0}) for $\LD\alpha,\beta^j,\gamma,\xi^i\star\xi\RD_{0,4,0}$. On the right hand side of the equation,
the second term vanishes because $\alpha\neq\xi$.
The first term is
$\LD\alpha\star\beta^{j},\gamma,\xi^i,\xi\RD_{0,4,0}$, it also vanishes. Or else we must have
$\alpha=\beta$ and $(j+1)\deg\alpha<1$. However, $\deg\alpha\geq\deg\gamma$, which implies $\deg\alpha+\deg(\beta^j)+\deg\gamma\leq1$. This contradicts with dimension axiom for nonvanishing correlators.
The last term will either vanish or equal to $\LD\alpha,\xi^{j+1},\gamma,\xi^{i}\RD_{0,4,0}$, we can continue to apply (\ref{eq:WDVV-GW d=0}) again and again, unless the second insertion is $\cal{P}$ or the last insertion is primitive, both correlators are zero.

For Type 3 correlator $\LD\gamma,\gamma,\gamma',\gamma'\RD_{0,4,0}$, let $\alpha,\beta$ be the other two primitive insertions and we apply WDVV equation (\ref{eq:WDVV-GW}) to $\LD\gamma,\gamma,\gamma',\gamma'\RD_{0,4,d}$ for $d=1$. The equation (\ref{eq:nonvanish n=4}) follows from divisor axiom, equation (\ref{eq:n=3,d=1}) and (\ref{eq:vanish n=4}).

%We consider the WDVV equation (\ref{eq:WDVV-GW}), and set $n=1$. Then terms in $I_0$ either vanish or contain an insertion $\cal{P}$, which is of form $\LD\cal{P},\cdots\RD_{0,4,d}$. $I_1$ vanish automatically. $I_2$ only contains correlators with lower degrees. $I_3$ contains correlators with lower degrees $\LD\RD_{0,4,0}$ or less insertions $\LD\RD_{0,3,d}$. Using the Chen-Ruan product structural constants in (\ref{eq:Chen-Ruan}), the proof follows by set degree $d=0$ in each case:

%For correlators of form $\LD\alpha,\beta^{j+1},\gamma,\xi^{i}\star\xi\RD_{0,4,d}$, where $i,j\geq2$. We can assume $\alpha\neq\xi, \deg\alpha\geq\deg\gamma$ and $j\geq\,i$, then apply (\ref{eq:WDVV-GW}).
    %set $$\gamma_1=\alpha,\gamma_2=\gamma^{j+1},\delta_1=\beta,\gamma_3=\xi^i,\gamma_4=\xi$$     Repeat this at most 3 times, all the terms we have will have lower degree or less insertions.

\end{proof}

\begin{lm}\label{lm:key-GW}
All basic correlators are reconstructable for $d\geq1$.
\end{lm}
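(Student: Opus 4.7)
The plan is to prove the lemma by strong induction on the degree $d$, with Lemma~\ref{lm:vanish n=4} and the computation~\eqref{eq:n=3,d=1} serving as the base case $d=1$. At each stage of the induction I would first reconstruct the basic three-point correlators of degree $d$ (Types 2, 4, 6), then use them to reconstruct the basic four-point correlators of degree $d$ (Types 1, 3, 5).

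For a basic three-point correlator $\LD\gamma_1,\gamma_2,\gamma_3\RD_{0,3,d}$, the divisor equation~\eqref{eq:divisor} gives
\begin{equation*}
\LD\gamma_1,\gamma_2,\gamma_3\RD_{0,3,d}=\frac{1}{d}\LD\gamma_1,\gamma_2,\gamma_3,\mathcal{P}\RD_{0,4,d}.
\end{equation*}
I would then express $\mathcal{P}$ as a nonzero scalar multiple of a Chen-Ruan product $\alpha\star\alpha^{|\alpha|-1}$ for a suitable primitive class $\alpha$ (chosen to match one of the supports of the $\gamma_i$) and apply the WDVV equation~\eqref{eq:WDVV-GW}. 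The terms $I_1, I_2$ involve only correlators of strictly smaller degree, while $I_0$ and $I_3$, after use of the Chen-Ruan product rule~\eqref{eq:Chen-Ruan}, pair degree-zero three- or four-point correlators with the target or with other degree-$d$ three-point correlators already handled at this stage. This yields a linear recursion from which $\LD\gamma_1,\gamma_2,\gamma_3\RD_{0,3,d}$ can be solved.

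For a basic four-point correlator $\LD\gamma_1,\gamma_2,\gamma_3,\gamma_4\RD_{0,4,d}$ I would instead split one non-primitive insertion, say $\gamma_4=\xi\star\xi^i$, and apply~\eqref{eq:WDVV-GW} directly. For Type 1 the mismatched supports force the terms in $I_0$ to vanish or to reduce to the target through finitely many further applications of~\eqref{eq:WDVV-GW}, exactly as in the proof of~\eqref{eq:vanish n=4}, while $I_1, I_2, I_3$ contain only lower-degree correlators and the just-reconstructed degree-$d$ three-point correlators. For Types 3 and 5 the maximality of $|\gamma|$ (respectively the specific values $|\beta|\in\{2,3\}$) ensures that the degree-zero four-point pieces produced in $I_3$ are exactly those computed in~\eqref{eq:nonvanish n=4}, and all remaining pieces are reconstructable by induction.

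The main obstacle will be bookkeeping: for each of the three targets $\mathbb{P}^1_{3,3,3}$, $\mathbb{P}^1_{4,4,2}$, $\mathbb{P}^1_{6,3,2}$ and each of the six types, one must identify a splitting $\gamma_i=\gamma\star\gamma'$ such that every term produced on the right-hand side of WDVV either has strictly smaller degree, has been treated earlier in the induction, or is the target correlator itself with a coefficient different from~$1$, so that the recursion can be inverted. The most delicate case is Type~3, where the target can reappear on the right-hand side; here one must verify case-by-case that the resulting linear equation is non-degenerate, and an analogous check is needed in Type~5 for $\mathbb{P}^1_{4,4,2}$ and $\mathbb{P}^1_{6,3,2}$.
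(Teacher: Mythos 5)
Your overall strategy --- induction on the degree $d$, trading the target for a correlator carrying an extra insertion of $\cal{P}$ written as a Chen--Ruan product $\beta\star\beta'$, and then applying WDVV so that the surviving terms have strictly lower degree --- is exactly the mechanism of the paper (main text for Types 1 and 6, Appendix A for the remaining types), and your ordering ``three-point correlators of degree $d$ before four-point correlators of degree $d$'' agrees with the paper's effective dependency order. But there is a concrete gap in how you treat the $I_0$ terms. You describe $I_0$ as ``pairing degree-zero three- or four-point correlators with the target or with other degree-$d$ three-point correlators''; that describes $I_3$, not $I_0$. The terms in $I_0$ are \emph{single} degree-$d$ correlators in which one insertion has been replaced by a Chen--Ruan product, e.g.\ $\LD\gamma_1\star\gamma_2,\delta_S,\gamma_3,\gamma_4\RD_{0,n+3,d}$. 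For your induction to close at the three-point stage these must be made to \emph{vanish}, since a surviving $I_0$ term is a four-point correlator of the same degree $d$, which is not yet available in your ordering. The vanishing does hold, by the support rule (\ref{eq:Chen-Ruan}), when the three insertions sit at distinct orbifold points (Types 2 and 4), but it fails for Type 6, $\LD\alpha,\alpha^i,\alpha^j\RD_{0,3,d}$, whose insertions all sit at one point: splitting $\cal{P}$ at that same point leaves all three terms of $I_0$ alive as same-support degree-$d$ four-point correlators, while splitting at a different point $\beta$ leaves the term $-\LD\alpha\star\alpha^i,\alpha^j,\beta,\beta'\RD_{0,4,d}$, again a degree-$d$ four-point correlator. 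Either way a single four-point WDVV does not close the recursion.

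The paper's resolution (equation (\ref{eq:Type6})) is to work instead with the five-point correlator $\LD\cal{P},\alpha^{j},\alpha,\alpha,\beta\star\beta'\RD_{0,5,d}$ carrying \emph{two} divisor-type insertions: the left-hand side becomes $\tfrac{d^2}{|\beta|}\LD\alpha^j,\alpha,\alpha\RD_{0,3,d}$, the choice of $\beta$ kills $I_0$, $I_1$ and $I_3$, and every surviving term of $I_2$ factors into correlators of degree strictly less than $d$. You would need this device (or an equivalent reordering in which the mixed-support four-point correlators of degree $d$ are reconstructed before the single-support three-point ones) to complete the argument; the Type 3 and Type 5 circularities you flag are, by contrast, handled exactly as you suggest and as in the paper. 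Finally, your base case is slightly off: Lemma \ref{lm:vanish n=4} is a $d=0$ statement and (\ref{eq:n=3,d=1}) is a single $d=1$ datum; the remaining $d=1$ basic correlators are not part of the initial data but are produced by the same recursions, whose $I_2$ sums are then empty.
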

For Type 1 correlators, go through the proof of Lemma \ref{lm:vanish n=4}, take any degree $d>0$, the reconstruction follows.

For Type 6, it is the same if we can reconstruct $\LD\alpha,\alpha^i,\alpha^j,\cal{P}\RD_{0,3,d}$. Like we did for the basic correlators, we reduce to the case $\LD\cal{P},\alpha,\alpha,\alpha^j,\cal{P}\RD_{0,3,d}$. We can choose $\beta$ such that $\deg\alpha^i+\deg\beta'\geq1$. Thus $I_0,I_1,I_3$ all vanish,
\begin{eqnarray}\label{eq:Type6}
\begin{split}
&\frac{d^{2}}{|\beta|}\LD\alpha^{j},\alpha,\alpha\RD_{0,3,d}=\langle\cal{P},\alpha^{j},\alpha,\alpha,\beta\star\beta'\rangle_{0,5,d}\\
%&=2\sum_{0<i<d}\langle\cal{P},\beta,\alpha,\mu\rangle_{0,4,d-i}\eta^{\mu,\nu}\langle\nu,\alpha,\beta',\alpha^j\rangle_{0,4,i}-2\sum_{0<i<d}\langle\cal{P},\alpha^{j},\alpha,\mu\rangle_{0,4,d-i}\eta^{\mu,\nu}\langle\nu,\alpha,\beta,\beta'\rangle_{0,4,i}\\
&=2\sum_{i=1}^{d-1}(d-i)\Big(\langle\beta,\alpha,\mu\rangle_{0,3,d-i}\eta^{\mu,\nu}\langle\nu,\alpha,\beta',\alpha^j\rangle_{0,4,i}
-\langle\alpha^{j},\alpha,\mu\rangle_{0,3,d-i}\eta^{\mu,\nu}\langle\nu,\alpha,\beta,\beta'\rangle_{0,4,i}\Big)
\end{split}
\end{eqnarray}
The choices of the insertions for each type are slightly different. For the reader's convenience, we put the proof in the appendix.

\subsubsection{Genus-0 resconstruction}
We have the following lemma.
\begin{lm}
The WDVV equation and the divisor equation imply that all the genus-0 correlators for $\mathbb{P}^1_{3,3,3}, \mathbb{P}^1_{4,4,2}, \mathbb{P}^1_{6,3,2}$ are uniquely determined by the pairing, the genus-0 3-point and 4-point correlators.
\end{lm}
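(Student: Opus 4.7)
The plan is to combine the reconstruction already carried out for $3$-point and $4$-point basic correlators (Lemmas stated above) with an inductive reduction in the number of insertions using WDVV, the string equation, and the divisor equation. Throughout we use the basis $\mathscr{B}$ and may reduce to correlators whose insertions all lie in $\mathscr{B}\setminus\{\mathbf{1},\mathcal{P}\}$: any insertion of $\mathbf{1}$ is removed by the string equation (provided $n+1>3$ or $d>0$, the remaining base cases being trivial or already part of the $3$-point data), and any insertion of $\mathcal{P}$ is removed at the cost of multiplying by $d$ via the divisor equation \eqref{eq:divisor}. Thus from now on I would fix $n\geq 5$ and assume every insertion is a twisted class $\Delta_{*,j}$.

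Next I would set up the induction. Order correlators lexicographically by $(n,d)$ with $n$ first and show that $\langle \Delta_{s_1,j_1},\dots,\Delta_{s_n,j_n}\rangle_{0,n,d}$ is reconstructible from strictly smaller data. Since every twisted insertion is either primitive or a Chen--Ruan product of primitives, at least two of the insertions can be written as $\gamma_3\star\gamma_4$ with $\gamma_3,\gamma_4\in\mathscr{B}\setminus\{\mathbf{1},\mathcal{P}\}$, either because the insertion itself is non-primitive or because two insertions share the same support and their degrees add to at most $1$. Pick the remaining two insertions $\gamma_1,\gamma_2$ (if all insertions are primitive, pick any two whose supports differ from that of $\gamma_3\star\gamma_4$) and apply the WDVV equation \eqref{eq:WDVV-GW} in degree $d\geq 1$, or \eqref{eq:WDVV-GW d=0} in degree $0$. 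The left-hand side is exactly the target correlator. On the right-hand side, the terms of type $I_0$ have the same number of insertions as the target but with either a non-trivial product $\gamma_1\star\gamma_2$, $\gamma_1\star\gamma_3$, or $\gamma_2\star\gamma_3$ replacing two of the insertions. Because the support conditions \eqref{eq:Chen-Ruan} force many of these products to vanish, and the others strictly decrease a suitable \emph{length} invariant (as in Section 3.2 / Lemma 3.8 for the FJRW side), iterating the $I_0$-reduction terminates in a finite number of WDVV applications.

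The splitting terms $I_1,I_2,I_3$ are quadratic expressions in correlators with $\leq n-1$ insertions (when $A$ or $B$ is a proper subset of $S$) or with smaller degree; hence they are handled by the inductive hypothesis on $(n,d)$. For the special case $n=4$, everything collapses to the classification into Types $1$--$6$ of basic correlators already treated: Lemma~\ref{lm:vanish n=4} disposes of Types $1$ and $3$ at $d=0$, Lemma~\ref{lm:key-GW} reconstructs the remaining basic $4$-point correlators at $d\geq 1$ from Chen--Ruan structural constants together with $\langle \Delta_{x,1},\Delta_{y,1},\Delta_{z,1}\rangle_{0,3,1}=1$, and $3$-point correlators at $d=0$ are just the Chen--Ruan product. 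So the inductive hypothesis is anchored correctly.

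The main obstacle is to ensure that the WDVV reduction really does strictly decrease a well-founded invariant and never produces a vicious circle among correlators of the same $(n,d)$. The issue arises because the $I_0$ terms \emph{re-introduce} $n$-point correlators. To settle this one must, as in the FJRW proof of Lemma~\ref{lm:FJRW-primitive}, define a \emph{length} invariant counting how many Chen--Ruan multiplications can still be applied before all four distinguished insertions become orthogonal in the sense of \eqref{eq:Chen-Ruan}, and verify that each $I_0$-step strictly lowers it. Once that combinatorial bookkeeping is in place, combined with the routine induction on $(n,d)$ for the splitting terms, the statement follows.
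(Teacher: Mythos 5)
Your overall strategy (strip $\mathbf{1}$ and $\mathcal{P}$ by string/divisor, then use WDVV to lower the number of insertions) is the right one, but there is a genuine gap at the crucial step, and it is precisely the step the paper's proof supplies and yours defers. The WDVV reduction to \emph{basic} correlators (at most two non-primitive insertions) never decreases the number of marked points: the $I_0$ terms live on the same $\overline{\mathscr{M}}_{0,n}$ and the same degree, so your lexicographic induction on $(n,d)$ cannot close unless you show that the $I_0$-iteration terminates \emph{and} that the terminal objects --- the basic correlators --- with $n\geq 5$ can still be disposed of. You explicitly acknowledge that the termination ("length decreases") is unverified bookkeeping, but even granting it, you would only have reduced to basic $n$-point correlators for arbitrary $n$. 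What actually kills these is the dimension count: a basic correlator has $n-2$ primitive insertions of degree at most $P$ and two insertions of degree at most $Q$, so the selection rule forces $n-2\leq (n-2)P+2Q$, which gives $n\leq 4$ for $\mathbb{P}^1_{3,3,3}$ and $n\leq 5$ for $\mathbb{P}^1_{4,4,2}$, $\mathbb{P}^1_{6,3,2}$; the handful of surviving basic $5$-point correlators are then reduced to $4$-point data by one explicit, case-specific WDVV (e.g.\ $\gamma_1=\Delta_{x,2}$, $\gamma_2=\Delta_{x,1}$, $\gamma_3=\Delta_{y,3}$, $\gamma_4=\Delta_{z,1}$ for $\LD\Delta_{z,1},\Delta_{z,1},\Delta_{z,1},\Delta_{x,3},\Delta_{y,3}\RD_{0,5,d}$). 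This inequality is the heart of the lemma and is entirely absent from your argument.

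A secondary but real problem: your claim that for $n\geq 5$ "at least two of the insertions can be written as $\gamma_3\star\gamma_4$ \dots because two insertions share the same support and their degrees add to at most $1$" conflates two insertions having a nonzero product with one insertion \emph{being} a product; WDVV requires the latter. Your fallback for the all-primitive case ("pick any two whose supports differ from that of $\gamma_3\star\gamma_4$") is vacuous when no insertion is a product. All-primitive correlators with $n\geq 5$ in fact vanish, but again only by the same dimension count you have not invoked. To repair the proof, insert the inequality $n-2\leq(n-2)P+2Q$ after the reduction to basic correlators, compute $P$ and $Q$ for each orbifold, and treat the finitely many basic $5$-point correlators explicitly.
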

\begin{proof}
Let us denote by $P$ the maximum complex degree of any primitive class, and by $Q$ is the maximum complex degree of any homogeneous non-divisor class. Similarly as we did for FJRW theory, we can use WDVV, plus the string equation and divisor equation to reconstruct genus-0 primary correlators from the Chen-Ruan product structural constants and basic correlators.

Now let $\LD\gamma_1,\cdots,\gamma_{n}\RD_{0,n,d}^{\mathcal{X}}$ be a basic correlator with first $n-2$ insertions primitive. Thus, $\deg\gamma_i\leq P$ for $i \leq n-2$ and $\deg\gamma_{n-1},\deg\gamma_n\leq Q$. By the dimension counting,
\begin{equation*}
n-2\leq (n-2)P+2Q.
\end{equation*}
It is easy to obtain the data $P$, $Q$ for each orbifold:
\begin{align*}
&\mathbb{P}^1_{3,3,3}:P=\frac{1}{3},Q=\frac{2}{3}.
&\mathbb{P}^1_{4,4,2}:P=\frac{1}{2},Q=\frac{3}{4}.
&\mathbb{P}^1_{6,3,2}:P=\frac{1}{2},Q=\frac{5}{6}.
\end{align*}
Thus we have $n=4$, for $\mathbb{P}^1_{3,3,3}$ and $n=5$ for $\mathbb{P}^1_{4,4,2}$ and $\mathbb{P}^1_{6,3,2}.$
We list all the basic genus-0 five-point correlators. Up to symmetry, the nonvanishing correlators are
%\begin{eqnarray*}
%\LD\Delta_{z,1},\Delta_{z,1},\Delta_{z,1},\Delta_{x,3},\Delta_{x,3}\RD_{0,5,\beta}^{\mathbb{P}^1_{4,4,2}},
%\LD\Delta_{z,1},\Delta_{z,1},\Delta_{z,1},\Delta_{x,3},\Delta_{y,3}\RD_{0,5,\beta}^{\mathbb{P}^1_{4,4,2}}.
%\end{eqnarray*}
%\begin{eqnarray*}
%\LD\Delta_{z,1},\Delta_{z,1},\Delta_{z,1},\Delta_{x,5},\Delta_{x,4}\RD_{0,5,\beta}^{\mathbb{P}^1_{6,3,2}},
%\LD\Delta_{z,1},\Delta_{z,1},\Delta_{z,1},\Delta_{x,5},\Delta_{y,2}\RD_{0,5,\beta}^{\mathbb{P}^1_{6,3,2}},
%\LD\Delta_{z,1},\Delta_{z,1},\Delta_{y,1},\Delta_{x,5},\Delta_{x,5}\RD_{0,5,\beta}^{\mathbb{P}^1_{6,3,2}}.
%\end{eqnarray*}
$\LD\Delta_{z,1},\Delta_{z,1},\Delta_{y,1},\Delta_{x,5},\Delta_{x,5}\RD_{0,5,d}^{\mathbb{P}^1_{6,3,2}}$,
and
$\LD\Delta_{z,1},\Delta_{z,1},\Delta_{z,1},\alpha,\beta\RD_{0,5,d}^{\mathscr{X}}$,
where
\begin{equation*}
(\alpha,\beta)=\left\{
\begin{array}{ll}
(\Delta_{x,3},\Delta_{x,3}),(\Delta_{x,3},\Delta_{y,3}), & \mathscr{X}=\mathbb{P}^1_{4,4,2};\\
(\Delta_{x,5},\Delta_{x,4}),(\Delta_{x,5},\Delta_{y,2}), & \mathscr{X}=\mathbb{P}^1_{6,3,2}.
\end{array}
\right.
\end{equation*}
%\begin{eqnarray*}
%\LD\Delta_{z,1},\Delta_{z,1},\Delta_{y,1},\Delta_{x,5},\Delta_{x,5}\RD_{0,5,d}^{\mathbb{P}^1_{6,3,2}},
%\LD\Delta_{z,1},\Delta_{z,1},\Delta_{z,1},\Delta_{x,3},\Delta_{x,3}\RD_{0,5,d}^{\mathbb{P}^1_{4,4,2}},
%\LD\Delta_{z,1},\Delta_{z,1},\Delta_{z,1},\Delta_{x,3},\Delta_{y,3}\RD_{0,5,d}^{\mathbb{P}^1_{4,4,2}},
%\LD\Delta_{z,1},\Delta_{z,1},\Delta_{z,1},\Delta_{x,5},\Delta_{x,4}\RD_{0,5,d}^{\mathbb{P}^1_{6,3,2}},
%\LD\Delta_{z,1},\Delta_{z,1},\Delta_{z,1},\Delta_{x,5},\Delta_{y,2}\RD_{0,5,d}^{\mathbb{P}^1_{6,3,2}}.
%\end{eqnarray*}
It follows by applying WDVV (\ref{eq:WDVV-GW}), that all the correlators above can be reconstructed from genus-0 correlators with less than five insertions by choosing some $\gamma_i, i=1,2,3,4$.
%For $\LD\Delta_{z,1},\Delta_{z,1},\Delta_{x,3},\Delta_{z,1},\cal{P}\RD_{0,5,6k+3}^{\mathbb{P}^1_{6,3,2}}$,
For example, we choose $\gamma_1=\Delta_{x,2}, \gamma_2=\Delta_{x,1}$, $\gamma_3=\Delta_{y,3}$, $\gamma_4=\Delta_{z,1}$ for  $\LD\Delta_{z,1},\Delta_{z,1},\Delta_{z,1},\Delta_{x,3},\Delta_{y,3}\RD_{0,5,d}^{\mathbb{P}^1_{4,4,2}}$.
\end{proof}
Furthermore, we continue the reconstruction on basic correlators and finally have:
\begin{thm}\label{thm:GW recons}
The WDVV-equation and divisor equation imply that all genus-0 primary Gromov-Witten invariants for $\mathscr{X}=\mathbb{P}^1_{3,3,3},\mathbb{P}^1_{4,4,2},\mathbb{P}^1_{6,3,2}$ can be reconstructed from the Chen-Ruan product and $\langle\Delta_{x,1},\Delta_{y,1},\Delta_{z,1}\rangle_{0,3,1}^{\mathscr{X}}=1.$
\end{thm}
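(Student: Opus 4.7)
The plan is to assemble the theorem from the preceding two lemmas via a double induction. The lemma immediately before the theorem already reduces any genus-0 primary correlator to basic correlators with at most four marked points; thus it suffices to express every basic correlator in terms of Chen-Ruan structure constants and the seed $\langle\Delta_{x,1},\Delta_{y,1},\Delta_{z,1}\rangle_{0,3,1}^{\mathscr{X}}=1$ from equation \eqref{eq:n=3,d=1}. The induction variables are, in order of priority, the degree $d$ and the number of insertions $n$; within each fixed $(d,n)$ the six basic-correlator types are handled one by one.

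For the base $d=0$, the basic 3-point correlators are by definition the Chen--Ruan structure constants of \eqref{eq:Chen-Ruan}, while the basic 4-point correlators are resolved by Lemma \ref{lm:vanish n=4}: Type 1 vanishes, and Type 3 evaluates to $-|\gamma|^{-2}$ via a single WDVV application against a correlator of degree $1$ (which is itself handled by \eqref{eq:n=3,d=1} after using the divisor equation \eqref{eq:divisor}). For $d\geq 1$, one invokes Lemma \ref{lm:key-GW}, whose proof (deferred to the appendix) supplies, for each of the six basic types, a choice of splitting insertions $\gamma_3\star\gamma_4$ in \eqref{eq:WDVV-GW} such that the right-hand side contains only correlators which are strictly smaller in the induction order: the terms in $I_0$ either vanish (by support/degree constraints on the Chen--Ruan product) or carry a $\cal{P}$-insertion that is removed by \eqref{eq:divisor} at the cost of reducing $n$; the boundary terms in $I_1$ and $I_3$ feed into products of 3-point correlators times correlators of strictly smaller $n$; and the interior splitting term $I_2$ is a sum over degrees $0<i<d$ and $0<d-i<d$, so each factor has strictly lower degree.

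The termination of the recursion is clear once the order is set up correctly: at every step either $d$ drops (and we apply the induction hypothesis on $d$, eventually bottoming out at $d=0$ or at the seed $d=1$, $n=3$), or $d$ is preserved but $n$ drops (and we apply the preceding reduction lemma). The prototype is the Type~6 recursion \eqref{eq:Type6}, where $\beta$ is chosen with $\deg\alpha^{j}+\deg\beta'\geq 1$, forcing $I_0,I_1,I_3$ to vanish and leaving the pure splitting $I_2$.

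The main obstacle is the case analysis hidden inside Lemma \ref{lm:key-GW}: for each of the three orbifolds $\mathbb{P}^1_{3,3,3},\mathbb{P}^1_{4,4,2},\mathbb{P}^1_{6,3,2}$ and for each of the six basic-correlator types, one must exhibit insertions $\gamma_1,\gamma_2,\gamma_3,\gamma_4$ and an auxiliary $\beta$ such that every surviving term on the WDVV right-hand side has strictly smaller degree or strictly fewer insertions. The delicate cases are Types 3 and 5, where the scarcity of admissible primitive classes (limited by the isotropy orders $p,q,r$) restricts the allowable $\beta$, and Types 4 and 6, where one must additionally trade a $\cal{P}$ via the divisor equation. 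Once this bookkeeping is verified orbifold-by-orbifold, the induction closes and the theorem follows.
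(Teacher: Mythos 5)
Your proposal is correct and follows essentially the same route as the paper: reduce to basic correlators with few marked points via the genus-0 reconstruction lemma, settle the $d=0$ cases with the Chen--Ruan product and Lemma \ref{lm:vanish n=4}, and then run the degree/insertion induction through the six types of Lemma \ref{lm:key-GW}, whose case-by-case WDVV choices (deferred to the appendix) are exactly where the paper also puts the real work. Your lexicographic ordering on $(d,n)$ and your identification of Types 3, 5 and the $\cal{P}$-trading in Types 4, 6 as the delicate points accurately reflect the paper's argument.
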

\subsubsection{Reconstruction of genus-1 primary potential}

Here we prove the reconstruction theorem for primary genus-1 Gromov-Witten invariants for elliptic orbifold $\mathbb{P}^1$.
Again, we use the Getzler relation.
\begin{thm}\label{thm:g=1 GW}
For $\mathscr{X}=\mathbb{P}^1_{3,3,3},\mathbb{P}^1_{4,4,2},\mathbb{P}^1_{6,3,2}$, the Getzler relation and divisor axiom imply that the genus-1 Gromov-Witten correlators of $\mathscr{X}$ can be reconstructed from genus-0 Gromov-Witten correlators.
\end{thm}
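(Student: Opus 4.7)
The plan is to carry out on the A-model the same Getzler-relation argument that established Theorem~\ref{thm:FJRW genus-1} on the FJRW side, now exploiting the additional divisor equation available in Gromov--Witten theory. First, by the virtual dimension formula~(\ref{form:GW dim}), a non-vanishing genus-$1$ primary correlator on $\mathscr{X}$ must have $\sum_{i=1}^{n}\deg\gamma_i=n$, and in each of the three orbifolds $\cal{P}$ is the unique Chen--Ruan class of complex degree $1$ (all twisted classes have strictly smaller degree). Hence every non-vanishing genus-$1$ primary correlator takes the form $\LD\cal{P},\ldots,\cal{P}\RD_{1,n,d}^{\mathscr{X}}$. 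The divisor equation~(\ref{eq:divisor}) collapses these for $d\geq 1$ to the one-parameter family $\LD\cal{P}\RD_{1,1,d}^{\mathscr{X}}$; for $d=0$ every such correlator with $n\geq 2$ vanishes by iterated application of the divisor equation, while $\LD\cal{P}\RD_{1,1,0}^{\mathscr{X}}$ is a standard degree-zero Hodge integral on $\overline{\mathscr{M}}_{1,1}$ determined directly from Chen--Ruan cohomology.

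To compute $\LD\cal{P}\RD_{1,1,d}^{\mathscr{X}}$ for each $d\geq 1$, I would run the Getzler argument of Theorem~\ref{thm:FJRW genus-1} verbatim with appropriate auxiliary insertions. Pick four twisted classes $\gamma_1,\gamma_2,\gamma_3,\gamma_4$ such that $\gamma_1\star\gamma_2$ and $\gamma_3\star\gamma_4$ are both nonzero scalar multiples of $\cal{P}$: concretely $(\Delta_{x,1},\Delta_{x,2},\Delta_{y,1},\Delta_{y,2})$ for $\mathbb{P}^1_{3,3,3}$, $(\Delta_{x,1},\Delta_{x,3},\Delta_{y,1},\Delta_{y,3})$ for $\mathbb{P}^1_{4,4,2}$, and $(\Delta_{x,1},\Delta_{x,5},\Delta_{y,1},\Delta_{y,2})$ for $\mathbb{P}^1_{6,3,2}$. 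For $n\geq 2$ I would integrate the pullback of $\Lambda_{1,n+2}^{\mathscr{X}}(\gamma_1,\gamma_2,\gamma_3,\gamma_4,\cal{P},\ldots,\cal{P})$ against Getzler's relation~(\ref{eq:Getzler}) under the forgetful map $\pi_{4,n-2}\colon\overline{\mathscr{M}}_{1,n+2}\to\overline{\mathscr{M}}_{1,4}$. Applying the splitting axiom to $\pi_{4,n-2}^{-1}(\delta_{2,2})$ together with the Chen--Ruan product formula~(\ref{eq:Chen-Ruan}), the only surviving configuration is the partition $B=\{1,\ldots,n-2\}$, $A=C=\emptyset$ with degree distribution $d_1=d_3=0$, $d_2=d$, which isolates a nonzero scalar multiple of $\LD\cal{P},\ldots,\cal{P}\RD_{1,n,d}^{\mathscr{X}}$.

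The remaining strata $\delta_{2,3}$, $\delta_{2,4}$, $\delta_{3,4}$, and the genus-$0$ boundary $\delta_{0,3}+\delta_{0,4}-2\delta_{\beta}$ each split, by the same axiom, into products that are either purely genus-$0$ (hence known by Theorem~\ref{thm:GW recons}) or involve a genus-$1$ factor with strictly fewer marked points or strictly smaller degree $d'<d$. Inductively applying Getzler's relation then produces a recursion that determines $\LD\cal{P},\ldots,\cal{P}\RD_{1,n,d}^{\mathscr{X}}$ for all $n,d\geq 1$ from genus-$0$ data, and the divisor equation reduces everything back to the one-parameter family $\LD\cal{P}\RD_{1,1,d}^{\mathscr{X}}$. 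The $n=1$ initialization of the recursion is obtained, as in the FJRW proof of $\LD\be_{xyz}\RD_{1,1}^{P_8}$, by integrating a suitable $\Lambda_{1,4}^{\mathscr{X}}$ against Getzler's relation on $\overline{\mathscr{M}}_{1,4}$ directly and isolating the nonzero $\delta_{3,4}$ contribution.

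The principal obstacle, absent on the FJRW side, is the bookkeeping of degree splitting: each node carries a partition $d=d_1+d_2+d_3$, and one must verify for each of the three orbifold types that every configuration with $d_1+d_3>0$ either vanishes (by the genus-$0$ twisted-sector selection rule forcing the auxiliary $\gamma_i$ to contract against specific dual classes) or reduces, after one application of WDVV and the divisor equation, to already reconstructed genus-$0$ correlators multiplied by a genus-$1$ factor of strictly smaller degree to which the induction hypothesis on $d$ applies. A case-by-case check paralleling~(\ref{eq:FJRW-Getzler1})--(\ref{eq:FJRW-Getzler3}) closes the argument.
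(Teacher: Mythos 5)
Your overall toolkit (dimension count, divisor equation, Getzler's relation, double induction) is the right one, but the work is allocated in a way that leaves the decisive step unjustified. Once you have observed that every non-vanishing genus-$1$ primary correlator is $\LD\cal{P},\ldots,\cal{P}\RD_{1,n,d}$ and that the divisor equation gives $\LD\cal{P},\ldots,\cal{P}\RD_{1,n,d}=d^{n-1}\LD\cal{P}\RD_{1,1,d}$, the entire theorem reduces to determining the one-parameter family $\LD\cal{P}\RD_{1,1,d}$. Your $n\geq 2$ computation on $\pi_{4,n-2}^{-1}$ of Getzler's relation is therefore redundant, and moreover its claimed inductive structure does not close: the ``lower'' terms it produces from $\delta_{2,3}$, $\delta_{2,4}$, $\delta_{3,4}$ are genus-$1$ correlators with fewer insertions but possibly the \emph{same} degree $d$, and by the divisor equation these are again multiples of the very unknown $\LD\cal{P}\RD_{1,1,d}$ you are solving for. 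So that recursion is at best a linear equation in $\LD\cal{P}\RD_{1,1,d}$ whose non-degeneracy you have not checked, and at worst circular.

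All of the content is in what you call the ``$n=1$ initialization,'' which is not an initialization but an infinite family indexed by $d$, requiring its own induction on $d$ — and this is precisely the paper's entire proof. The paper integrates $\Lambda_{1,4,d+1}^{\mathscr{X}}$ over Getzler's relation on $\overline{\mathscr{M}}_{1,4}$ with insertions chosen so that (i) no two of them pair non-trivially, which forces the contributions of $\delta_{2,2}$, $\delta_{2,3}$, $\delta_{2,4}$ to vanish and confines the genus-$1$ unknown to $\delta_{3,4}$, and (ii) the genus-$0$ four-point invariant appearing in the $\delta_{3,4}$ splitting is non-zero in degree one (e.g.\ $\LD\Delta_{x,2},\Delta_{x,2},\Delta_{y,1},\Delta_{z,1}\RD_{0,4,1}=\tfrac13$ for $\mathbb{P}^1_{3,3,3}$), so that $\LD\cal{P}\RD_{1,1,d}$ enters with a non-zero coefficient and the $d$-induction closes. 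Your quadruples $(\Delta_{x,1},\Delta_{x,2},\Delta_{y,1},\Delta_{y,2})$, etc., violate (i): the pairs $(1,2)$ and $(3,4)$ pair non-trivially, so $\delta_{2,2}$ contributes a term $d\,\LD\cal{P}\RD_{1,1,d}$ through the two-noded genus-$1$ middle component, and you would have to verify that the combined coefficient of $\LD\cal{P}\RD_{1,1,d}$ over all strata does not vanish. You should either switch to the paper's insertions or supply that non-cancellation check, and in either case you must exhibit a non-zero degree-one genus-$0$ four-point invariant to seed the recursion; without it the argument does not determine anything.
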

We consider the nonzero genus-1 correlator $\LD\gamma_1,\cdots,\gamma_{n}\RD_{1,n,d}^{\mathcal{X}}$.
As $\mathcal{X}$ is an elliptic orbifold $\mathbb{P}^1$ here, we have $\deg\gamma_{i}\leq 1$. According to the dimension formula (\ref{form:GW dim}), the correlator is nonzero only if every $\gamma_{i}$ is $\cal{P}$. For $d\neq0$, the genus-1 primary correlators are nonzero only if they are of type $\LD\cal{P},\cdots,\cal{P}\RD_{1,n,d}^{\mathcal{X}}$. By the divisor axiom, we have:
\begin{equation*}
\LD\cal{P},\cdots,\cal{P}\RD_{1,n,d}^{\mathcal{X}}=d^{n-1}\LD\cal{P}\RD_{1,1,d}^{\mathcal{X}}.
\end{equation*}
\begin{rem}
%$\LD\RD_{1,0,d}^{\mathcal{X}}=d^{-1}\LD\,P\RD_{1,1,d}^{\mathcal{X}}$ for $d\neq0$.
$\LD\cal{P}\RD_{1,1,0}^{\mathcal{X}}=-\frac{1}{24}$ and $\LD\cal{P},\cdots,\cal{P}\RD_{1,n,0}^{\mathcal{X}}=0$ for $n>1$.
\end{rem}
Now, we give the proof of Theorem \ref{thm:g=1 GW} by reconstructing $\LD\cal{P}\RD_{1,1,d}^{\mathcal{X}}$.
\begin{proof}
{\bf $\mathbb{P}^1_{3,3,3}$-case:}
We choose four insertions $\Delta_{x,2},\Delta_{x,2},\Delta_{y,1},\Delta_{z,1}\in\,H^*_{CR}(\mathbb{P}^1_{3,3,3})$, and we simply denote by $\Delta_{2,2;1;1}$. We integrate the class $\Lambda_{1,4,d}^{\mathbb{P}^1_{3,3,3}}(\Delta_{2,2;1;1})$ over codimension 2 strata of $\overline{\mathscr{M}}_{1,4}$. For $\delta_{3,4}$, the contribution comes from four decorated dual graphs:
\begin{center}
\begin{picture}(50,23)
%Graph1
    % Circles
    \put(-40,9){\circle{2}}

	% Lines
	\put(-39,9){\line(1,0){9}}
    \put(-30,9){\line(2,1){10}}
    \put(-30,9){\line(2,-1){10}}
    \put(-20,14){\line(2,1){10}}
    \put(-20,14){\line(1,0){10}}
    \put(-20,14){\line(2,-1){10}}

	% Labels

	\put(-9,19){$\Delta_{x,2}$}
	\put(-9,14){$\Delta_{y,1}$}
	\put(-9,9){$\Delta_{z,1}$}
	\put(-19,4){$\Delta_{x,2}$}

    % Texts

	\put(-44,19){$\Delta_{1,234}=\Delta_{2,134}:$}

%Graph2

    \put(10,9){\circle{2}}

	% Lines
	\put(11,9){\line(1,0){9}}
    \put(20,9){\line(2,1){10}}
    \put(20,9){\line(2,-1){10}}
    \put(30,14){\line(2,1){10}}
    \put(30,14){\line(1,0){10}}
    \put(30,14){\line(2,-1){10}}

	% Labels

	\put(41,19){$\Delta_{x,2}$}
	\put(41,14){$\Delta_{x,2}$}
	\put(41,9){$\Delta_{z,1}$}
	\put(31,4){$\Delta_{y,1}$}

    % Texts

	\put(10,19){$\Delta_{3,124}:$}

%Graph3
    \put(60,9){\circle{2}}

	% Lines
	\put(61,9){\line(1,0){9}}
    \put(70,9){\line(2,1){10}}
    \put(70,9){\line(2,-1){10}}
    \put(80,14){\line(2,1){10}}
    \put(80,14){\line(1,0){10}}
    \put(80,14){\line(2,-1){10}}

	% Labels

	\put(91,19){$\Delta_{x,2}$}
	\put(91,14){$\Delta_{x,2}$}
	\put(91,9){$\Delta_{y,1}$}
	\put(81,4){$\Delta_{z,1}$}

    % Texts

	\put(60,19){$\Delta_{4,123}:$}
\end{picture}
\end{center}
Let us fix the total degree is $d+1$. Then
\begin{eqnarray*}
\begin{split}
&\int_{[\Delta_{1,234}]}\Lambda_{1,4,d+1}^{\mathbb{P}^1_{3,3,3}}(\Delta_{2,2;1;1})\\
&=\sum_{d_1+d_2+d_3=d+1}
\LD\cal{P}\RD_{1,1,d_1}^{\mathbb{P}^1_{3,3,3}}\eta^{\cal{P},1}\LD 1,\Delta_{x,2},\Delta_{x,1}\RD_{0,3,d_2}^{\mathbb{P}^1_{3,3,3}}\eta^{\Delta_{x,1},\Delta_{x,2}}\LD\Delta_{2,2;1;1}\RD_{0,4,d_3}^{\mathbb{P}^1_{3,3,3}}\\
&=\sum_{i=0}^{d+1}
\LD\cal{P}\RD_{1,1,i}^{\mathbb{P}^1_{3,3,3}}\LD\Delta_{2,2;1;1}\RD_{0,4,d+1-i}^{\mathbb{P}^1_{3,3,3}}
\end{split}
\end{eqnarray*}
Then we use the result from genus-0 recursion that
$$\LD\Delta_{2,2;1;1}\RD_{0,4,0}^{\mathbb{P}^1_{3,3,3}}=0, \LD\Delta_{2,2;1;1}\RD_{0,4,1}^{\mathbb{P}^1_{3,3,3}}=\frac{1}{3}.$$
Overall, we have
\begin{equation}\label{eq:genus-1}
\int_{\delta_{3,4}}\Lambda_{1,4,d+1}^{\mathbb{P}^1_{3,3,3}}(\Delta_{2,2;1;1})
=\frac{4}{3}\LD\cal{P}\RD_{1,1,d}^{\mathbb{P}^1_{3,3,3}}+4\sum_{i=0}^{d-1}
\LD\cal{P}\RD_{1,1,i}^{\mathbb{P}^1_{3,3,3}}\LD\Delta_{2,2;1;1}\RD_{0,4,d+1-i}^{\mathbb{P}^1_{3,3,3}}
\end{equation}
Considering other strata in Getzler's Relation, the integration over $\delta_{2,2},\delta_{2,3}$ and $\delta_{2,4}$ will all vanish for the following reasons:
\begin{itemize}
\item For $\delta_{2,2}$, $\LD\alpha,\beta,1\RD_{0,3,j}^{\mathbb{P}^1_{3,3,3}}=0$ for all     $\{\alpha,\beta\}\subset\Delta_{2,2;1;1}$.
\item For $\delta_{2,3}$, $\LD\alpha\RD_{1,1,j}^{\mathbb{P}^1_{3,3,3}}=0$ for all $\alpha\in\Delta_{2,2;1;1}$.
\item For $\delta_{2,4}$, $\LD 1,\alpha,\beta,-\RD_{0,4,j}^{\mathbb{P}^1_{3,3,3}}=0$ for all $\{\alpha,\beta\}\subset\Delta_{2,2;1;1}$.
\end{itemize}
As the integration of $\Lambda_{1,4,d+1}^{\mathbb{P}^1_{3,3,3}}(\Delta_{2,2;1;1})$ over $\delta_{0,3},\delta_{0,4},\delta_{\beta}$ only give genus-0 invariants, the Getzler's relation implies $\LD\cal{P}\RD_{1,1,d}^{\mathbb{P}^1_{3,3,3}}$ can be reconstructed from $\LD\cal{P}\RD_{1,1,d'}^{\mathbb{P}^1_{3,3,3}}$ with $d'<d$ and genus-0 primary correlators.

{\bf $\mathbb{P}_{4,4,2}^1$-case:}
Now we choose four insertions $\Delta_{x,3},\Delta_{x,2},\Delta_{y,1},\Delta_{z,1}\in\,H^*_{CR}(\mathbb{P}^1_{4,4,2})$ and denote by $\Delta_{3,2;1;1}$.
In this case, we use genus-0 computation:
$$\LD\Delta_{3,2;1;1}\RD_{0,4,0}^{\mathbb{P}^1_{4,4,2}}=0, \LD\Delta_{3,2;1;1}\RD_{0,4,1}^{\mathbb{P}^1_{4,4,2}}=\frac{1}{4}.$$
Integrating $\Lambda_{1,4,d+1}^{\mathbb{P}^1_{4,4,2}}(\Delta_{3,2;1;1})$ on the $\delta_{3,4}$, we have
\begin{equation*}
\int_{\delta_{3,4}}\Lambda_{1,4,d+1}^{\mathbb{P}^1_{4,4,2}}(\Delta_{3,2;1;1})
=\LD\cal{P}\RD_{1,1,d}^{\mathbb{P}^1_{4,4,2}}+\sum_{i=0}^{d-1}
\LD\cal{P}\RD_{1,1,i}^{\mathbb{P}^1_{4,4,2}}\LD\Delta_{3,2;1;1}\RD_{0,4,d+1-i}^{\mathbb{P}^1_{4,4,2}}
\end{equation*}
Again, integrations over $\delta_{2,2},\delta_{2,3},\delta_{2,4}$ are zero and over $\delta_{0,3},\delta_{0,4},\delta_{\beta}$ only give genus-0 contribution. Thus Getzler's Relation implies $\LD\cal{P}\RD_{1,1,d}^{\mathbb{P}^1_{4,4,2}}$ is reconstructable.

{\bf $\mathbb{P}_{6,3,2}^1$-case:}
Now we choose four insertions $\Delta_{x,5},\Delta_{x,2},\Delta_{y,1},\Delta_{z,1}\in\,H^*_{CR}(\mathbb{P}^1_{6,3,2})$ and denote them as $\Delta_{5,2;1;1}$.
In this case, we use genus-0 computation:
$$\LD\Delta_{5,2;1;1}\RD_{0,4,0}^{\mathbb{P}^1_{6,3,2}}=0, \LD\Delta_{5,2;1;1}\RD_{0,4,1}^{\mathbb{P}^1_{6,3,2}}=\frac{1}{6}.$$
Now we integrate $\Lambda_{1,4,d+1}^{\mathbb{P}^1_{6,3,2}}(\Delta_{5,2;1;1})$ over $\delta_{3,4}$,
\begin{equation*}
\int_{\delta_{3,4}}\Lambda_{1,4,d+1}^{\mathbb{P}^1_{6,3,2}}(\Delta_{5,2;1;1})
=\frac{2}{3}\LD\cal{P}\RD_{1,1,d}^{\mathbb{P}^1_{6,3,2}}+4\sum_{i=0}^{d-1}
\LD\cal{P}\RD_{1,1,i}^{\mathbb{P}^1_{6,3,2}}\LD\Delta_{5,2;1;1}\RD_{0,4,d+1-i}^{\mathbb{P}^1_{6,3,2}}
\end{equation*}
Other strata only give genus-0 correlators. Thus $\LD\cal{P}\RD_{1,1,d}^{\mathbb{P}^1_{6,3,2}}$ is reconstructable.
\end{proof}

\subsubsection{Higher genus reconstruction}

Higher genus reconstruction is identical to that of FJRW-theory.
By applying Lemma \ref{lm:g-reduction} to Gromov-Witten theory, we obtain
\begin{thm}
For elliptic orbifold $\mathbb{P}^1$, the ancestor potential function is uniquely determined by the genus-0 potential and the genus-1 primary potential.
\end{thm}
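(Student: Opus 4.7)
The plan is to mimic the higher genus reconstruction for FJRW-theory given earlier in the paper, with the Chen-Ruan selection rule replacing the FJRW selection rule. The central tool is the $g$-reduction Lemma \ref{lm:g-reduction} combined with the splitting axiom of the cohomological field theory $\Lambda^{\cal X}_{g,n,d}$. The induction will be on the pair $(g,\sum l_i)$ ordered lexicographically, with base cases supplied by Theorem \ref{thm:GW recons} (all genus-0 correlators are determined by genus-0 three-point data) and Theorem \ref{thm:g=1 GW} (all primary genus-1 correlators are determined by genus-0 data).

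First I would check that the selection rule forces a sufficient number of descendant insertions whenever $g\geq 2$. Because $c_1(T\cal X)=0$, the dimension formula (\ref{form:GW dim}) gives
\[
\sum_{i=1}^n l_i+\sum_{i=1}^n\deg\gamma_i=2g-2+n,
\]
and since $\deg\gamma_i\leq 1$ for every Chen-Ruan class on an elliptic orbifold $\mathbb{P}^1$, we obtain $\sum l_i\geq 2g-2\geq g$ for $g\geq 2$. This is exactly the hypothesis of $g$-reduction. For $g=1$, any ancestor correlator with $\sum l_i\geq 1$ similarly satisfies the hypothesis, so one only needs the primary genus-1 correlators as initial data.

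Next I would apply Lemma \ref{lm:g-reduction} to the monomial $\prod_i\bar\psi_i^{\,l_i}$ on $\overline{\cal M}_{g,n}$, writing it as a linear combination of boundary classes, each supported on a dual graph with at least one edge. Pulling back to $\overline{\cal M}_{g,n}(\cal X,d)$ and integrating against $\Lambda^{\cal X}_{g,n,d}(\gamma_1,\dots,\gamma_n)$, the splitting axiom expresses the result as a sum of products of correlators on the normalized components. A separating node splits the correlator into two pieces of genera $g_1+g_2=g$ with $g_1,g_2<g$; a non-separating node replaces the correlator with one of genus $g-1$ carrying two additional insertions distributed over the inverted diagonal of the Poincar\'e pairing. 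In either case the resulting correlators either have strictly smaller genus, or the same genus with a strictly smaller descendant degree (on each component the residual $\bar\psi$-monomial inherited from the normalization has smaller total degree), so the lexicographic induction closes.

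The main obstacle is bookkeeping: one must verify that after $g$-reduction and splitting, the residual $\psi$ and $\kappa$ monomials on the normalized components either already sit below the $g$-reduction threshold for their smaller genus (so that the inner induction on $\sum l_i$ finishes them off at each fixed genus) or trigger further $g$-reduction that continues to decrease the induction measure. This is handled exactly as in the proof of Theorem~6.2.1 of \cite{FJR2}, which applies verbatim because the central charge $\hat c_W=1$ matches $\dim_{\mathbb C}\cal X=1$ and the Chen-Ruan degree bound $\deg\gamma_i\leq 1$ plays the same role as the FJRW degree bound. The string, dilaton, divisor, and WDVV equations then propagate the primary reconstruction to the ancestor potential, completing the argument.
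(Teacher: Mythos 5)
Your proposal is correct and follows essentially the same route as the paper: both use the dimension formula together with the bound $\deg\gamma_i\leq 1$ to force $\deg\prod_i\bar\psi_i^{l_i}\geq 2g-2\geq g$, then invoke the $g$-reduction lemma and the splitting axiom to reduce to genus-0 and primary genus-1 correlators. Your explicit lexicographic induction on $(g,\sum l_i)$ is a slightly more careful organization of the descent that the paper describes informally, but the substance is identical.
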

\begin{proof}
We consider the Gromov-Witten invariants for the elliptic orbifold $\mathbb{P}^1$,
\begin{equation*}
\LD\tau_{l_1}(\alpha_1),\cdots,\tau_{l_n}(\alpha_n),T_{i_1},\cdots,T_{i_k}\RD_{g,n+k,d}
=\int_{\overline{\mathscr{M}}_{g,n+k}}\Psi_{l_1,\cdots,l_n}\cdot\Lambda_{g,n+k,d}^{\mathscr{X}}(\alpha_1,\cdots,\alpha_n,T_{i_1},\cdots,T_{i_k})
\end{equation*}
where $\Psi_{l_1,\cdots,l_n}=\prod_i\bar\psi_i^{l_i}$.
The correlator will vanish except for
\begin{equation}\label{eq:degree g-red}
\deg\Psi_{l_1,\cdots,l_n}+\sum_{i=1}^{n}\deg\alpha_i+\sum_{j=1}^{k}\deg(T_{i_j})=2g-2+n+k.
\end{equation}

As long as $\deg\alpha_i\leq 1$ and $\deg T_{i_j}\leq 1$, we have $\deg\Psi_{l_1,\cdots,l_n}\geq2g-2$. Now we apply Lemma \ref{lm:g-reduction}. If $\deg\Psi_{l_1,\cdots,l_n}$ is large, then the integral is changed to the integral over the boundary classes while decreasing the degree of the total $\psi$-classes or $\kappa$-classes. After applying the splitting and composition laws, the genus involved will also
decrease. We can continue this process until the original integral
is represented by a linear combination of primary correlators of genus-0 and
genus-1.

Moreover, for primary genus-1 correlators, we have $\deg\Psi_{l_1,\cdots,l_n}=0$. Thus equation (\ref{eq:degree g-red}) holds if and only if
$\deg\alpha_i=\deg T_{i_j}=1$, i.e, we only need to consider genus-1 correlators of type $\LD \cal{P},\cdots,\cal{P}\RD_{1,n,d}$.
\end{proof}
\subsection{LG-to-CY Mirror theorem}
In this subsection, we establish the LG-to-CY mirror theorem for all genera between the Gromov-Witten theory and the Saito-Givental theory of $W_{\infty}$.

\begin{thm}\label{thm:LG-LG mirror}
When $W:=P_{8},X_{9},J_{10}$, we can choose the coordinates appropriately, such that the ancestor potential function
\begin{equation}
\cal{A}_{GW}^{W^T/\widetilde{G}_{\textrm{max}}}(\textbf{t}_A)=\cal{A}_{W}^{SG}(\textbf{t}_B).
\end{equation}
where $\textbf{t}_A, \textbf{t}_B$ are semi-simple points.
\end{thm}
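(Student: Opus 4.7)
The plan is to parallel the strategy used for the LG-to-LG mirror theorem in Section 3: establish that both the A-model potential $\cal{A}_{GW}^{W^T/\widetilde{G}_{\textrm{max}}}$ and the B-model ancestor $\cal{A}_{W_\infty}^{SG}$ are uniquely determined by the same small package of seed data, and then verify that the seeds agree. On the A-side this determination is already assembled in the preceding subsections: Theorem \ref{thm:GW recons} shows that every genus-0 primary Gromov-Witten invariant of $\mathbb{P}^1_{3,3,3}, \mathbb{P}^1_{4,4,2}, \mathbb{P}^1_{6,3,2}$ is reconstructed from the Chen-Ruan product and the single correlator $\LD \Delta_{x,1},\Delta_{y,1},\Delta_{z,1}\RD_{0,3,1}^{\mathcal{X}}=1$; Theorem \ref{thm:g=1 GW} reduces genus-1 primaries to genus-0 via Getzler's relation; and the $g$-reduction of Lemma \ref{lm:g-reduction} reduces all descendants and higher genus to primary genus-0 and genus-1 data. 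Combined with Theorem \ref{thm:convergent}(1), this yields a bona fide analytic ancestor potential on a neighborhood of the large volume limit $q=0$ that is completely determined by the two seeds just named.

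For the B-side, the key observation is that $\cal{A}_{W_\infty}^{SG}(\tau)$ is a semi-simple cohomological field theory on its domain of definition, so WDVV, the divisor equation with respect to the flat coordinate $t_{-1}$ along the $\tau$-direction, Getzler's relation, and the $g$-reduction all apply. I will choose the coordinate system so that at $\tau=i\infty$ the flat coordinate $t_{-1}$ is identified with the divisor class $\cal{P}$ and the natural expansion variable is $q=e^{2\pi i\tau}$; under this identification the same argument of Theorems \ref{thm:GW recons} and \ref{thm:g=1 GW} and Lemma \ref{lm:g-reduction} goes through verbatim, so $\cal{A}_{W_\infty}^{SG}$ is also uniquely reconstructed from its $q=0$ Frobenius structure together with a single quantum correlator.

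Seed matching is then a direct computation analogous to the one performed in the proof of the LG-to-LG theorem. First, expand the period matrix of \S2.2 near $\sigma=\infty$ using the chosen symplectic basis $A_\infty, B_\infty$ and the primitive form normalized by $\pi_{A_\infty}$; extract the flat coordinates to sufficient order in $q$, and check that the $q\to 0$ limit of the resulting Frobenius algebra is isomorphic, via the mirror map sending primitive Milnor-ring generators to $\Delta_{x,i},\Delta_{y,j},\Delta_{z,k}$ and the degree-one generator to $\cal{P}$, to the Chen-Ruan ring of $X_{W^T}/\widetilde{G}_{\textrm{max}}$. Second, extract the $q^1$-coefficient of the three-point function $\LD\LD X,Y,Z\RD\RD_{0,3}^{SG}$ where $X,Y,Z$ correspond to $\Delta_{x,1},\Delta_{y,1},\Delta_{z,1}$, and verify that it equals $1$. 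Once both seeds are matched, the identical reconstructions produce identical ancestor potentials at every semi-simple point, which is the claim.

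The principal obstacle is the explicit B-side analysis at $\sigma=\infty$. One must compute the mirror map in the large complex structure limit for singularities whose Milnor ring contains orbifold-type generators of several different weights, control the primitive form $\omega=d^3x/\pi_{A_\infty}(s_{-1})$ and the flat coordinates to the order needed to read off the $q$-expansion, and check semi-simplicity so that Givental's formalism and the tautological reconstruction are applicable near $\tau=i\infty$. This is more delicate than the Gepner-limit computation of \S3.3.2 because the reference point now corresponds to a puncture of the base and the flat coordinates are defined by transcendental mirror maps rather than formal power series. Nevertheless, once the periods are expanded and the $q$-coefficient of the genus-0 three-point correlator is identified as a residue, the identification with the Chen-Ruan ring in the limit and the matching of the single quantum seed both reduce to finite calculations, and the remainder of the argument is forced by the reconstruction theorems of this section.
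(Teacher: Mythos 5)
Your proposal follows essentially the same route as the paper: reconstruct both the Gromov--Witten side (via Theorem \ref{thm:GW recons}, the genus-1 Getzler argument, and $g$-reduction) and the Saito--Givental side at $\sigma=\infty$ from the Frobenius-algebra data plus one degree-one three-point correlator, then match these seeds through the mirror map built from the periods $\pi_{A_\infty},\pi_{B_\infty}$ and the expansion $q=e^{2\pi i\tau/p}$. The paper carries out exactly this seed-matching case by case for $\mathbb{P}^1_{3,3,3}$, $\mathbb{P}^1_{4,4,2}$, $\mathbb{P}^1_{6,3,2}$ (importing the divisor axiom for B-model ancestors and the explicit three-point computations from Milanov--Ruan), so your plan is correct and not materially different.
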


First of all, according to \cite{MR}, the divisor axiom also holds for the ancestors of the singularities we consider in this paper.
%Thus we have the equality for the ancestors,$$\LD\alpha_1,\cdots,\alpha_n,\cal{P}\RD_{g,n+1,d}=d\LD\alpha_1,\cdots,\alpha_n\RD_{g,n,d}$$
%\begin{equation*}\LD\tau_{l_1}(\phi_1),\cdots,\tau_{l_n}(\phi_n),P\RD_{g,n+1,d}=d\LD\tau_{l_1}(\phi_1),\cdots,\tau_{l_n}(\phi_n)\RD_{g,n,d}+\sum_{i=1}^n\LD\cdots,P\bullet\tau_{l_i-1}(\phi_i),\cdots\RD_{g,n,d}\end{equation*}Here $P\bullet$ is the Frobenius multiplication by $P$ at $t=(t_{-1},0,\cdots,0)$.
Now we have the identical reconstruction theorem of Saito-Givental theory as that of Gromov-Witten-theory.
\begin{thm}
For the three types of elliptic singularitis $W=P_8, X_{9}, J_{10}$, the ancestor correlator $\LD\tau_{l_1}(\alpha_1),\cdots,\tau_{l_n}(\alpha_n)\RD_{g,n}^{W_{\infty}}$ in Saito-Givental theory is uniquely reconstructed by tautological relations (WDVV-relation, divisor axiom, Getzler relation and g-reduction) from the pairing, Milnor ring structural constants and genus-0 correlators
$\LD\,x,x,x\RD_{0}^{P_{8}(\infty)}$, $\LD\,x,y,z\RD_{0}^{P_{8}(\infty)}$, $\LD\,x,y,xz,xz\RD_{0}^{X_{9}(\infty)}$,
$\LD\,y,y,y\RD_{0}^{J_{10}(\infty)}$, $\LD\,x,x,xz\RD_{0}^{J_{10}(\infty)}$ and $\LD\,x,y,z\RD_{0}^{J_{10}(\infty)}$
respectively.
\end{thm}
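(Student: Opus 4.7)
The plan is to mimic the Gromov-Witten reconstruction argument (Theorem \ref{thm:GW recons}, Theorem \ref{thm:g=1 GW}, and the ensuing $g$-reduction) on the B-model side. At any semi-simple point of the miniversal deformation, Saito-Givental theory is a cohomological field theory, so its ancestor correlators automatically satisfy WDVV, the string equation, Getzler's relation, and the $g$-reduction of Lemma \ref{lm:g-reduction}. Moreover, at the large complex structure limit $\sigma=\infty$, Milanov-Ruan show that a distinguished flat coordinate plays the role of a K\"ahler parameter and that the corresponding divisor axiom holds. The dimension (selection) rule coincides with the Gromov-Witten one because $\hat c_W=1$ matches $\dim_{\mathbb C}\cal X=1$ for an elliptic orbifold $\mathbb P^1$. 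These are exactly the tautological inputs used throughout Section 4.

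First I would transport the notions of \emph{primitive class} and \emph{basic correlator} from Section 4 to the B-model via the mirror map $\bar\Psi_W$ of Section 3.3, which is a graded algebra isomorphism between the Milnor ring and the CohFT state space. The genus-0 reconstruction then runs exactly as in Lemma \ref{lm:vanish n=4} and Lemma \ref{lm:key-GW}: using WDVV and the divisor axiom, every genus-0 primary correlator is expressed in terms of Milnor ring structural constants and the listed genus-0 three-point correlators at $\sigma=\infty$. The dimensional counting in Theorem \ref{thm:GW recons} that cuts the list down to the six correlators appearing in the statement is preserved under $\bar\Psi_W$, since the maximum degrees $P$ and $Q$ of primitive and non-divisor classes are the same on both sides of the mirror.

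For genus one I would apply Getzler's relation to $\Lambda_{1,4,d+1}^{W_\infty}$ with the same insertions used in Theorem \ref{thm:g=1 GW}. The contributions from $\delta_{2,2}$, $\delta_{2,3}$, $\delta_{2,4}$ vanish for the same degree reasons, while $\delta_{3,4}$ yields a recursion expressing the one-point ancestor function with the degree-one insertion $\cal P$ in terms of genus-0 correlators and of one-point functions of strictly smaller degree. Finally, $g$-reduction (Lemma \ref{lm:g-reduction}) recovers all higher-genus ancestor correlators from the genus-0 potential and the genus-1 primary potential, as in the closing argument of Section 4.

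The main obstacle is verifying, on the B-side at $\sigma=\infty$, both the divisor axiom and the precise form of the selection rule in Saito-Givental theory. These are precisely the structural results that Milanov-Ruan establish for the three families $P_8(\sigma)$, $X_9(\sigma)$, $J_{10}(\sigma)$; once they are granted, every step above is a direct transcription of Section 4, with $H^*_{CR}(\cal X)$ and the Chen-Ruan product replaced respectively by the Frobenius manifold of $W_\infty$ and the Milnor ring product.
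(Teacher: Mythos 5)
Your proposal matches the paper's (implicit) argument: the paper gives no separate proof of this theorem, merely observing that Saito--Givental theory is a cohomological field theory satisfying the same tautological relations, that Milanov--Ruan supply the divisor axiom at $\sigma=\infty$, and that the reconstruction of Section 4 (Lemmas \ref{lm:vanish n=4} and \ref{lm:key-GW}, Theorems \ref{thm:GW recons} and \ref{thm:g=1 GW}, and $g$-reduction) therefore transcribes verbatim to the B-side. The only cosmetic point is that the relevant graded-algebra identification is the one with the Frobenius algebra at $\sigma=\infty$ written out in the proofs of the mirror theorems, not $\bar\Psi_W$ of Section 3.3 (which lives at the Gepner point $\sigma=0$); since the grading of the Milnor ring, and hence the data $P$, $Q$ and the notions of primitive and basic classes, are unchanged, this does not affect the argument.
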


To prove the LG-to-CY mirror theorem, we only need to match the above 3-point correlators. This is already computed by T. Milanov and Y. Ruan. For more details, we refer to \cite{MR}.

\subsubsection{Proof of LG-to-CY mirror theorem for the $\mathbb{P}^1_{3,3,3}$ case.}
\begin{proof}
The Picard-Fuchs equation in this case is
\begin{equation}\label{eq:Picard-Fuchs1}
\frac{d^2u}{d\sigma^2} + \frac{3\sigma^2}{\sigma^3+27} \frac{du}{d\sigma} +  \frac{\sigma}{\sigma^3+27} u = 0
\end{equation}
We set $\lambda=-\sigma^3/27$, the indicial equation is
\begin{equation}
\lambda^2-\frac{2}{3}\lambda+\frac{1}{9}=0
\end{equation}
It has double root $\lambda=1/3$. Near $\sigma=\infty$, the monodromy is  $\begin{pmatrix}1 & 0  \\ 3 & 1 \end{pmatrix}$. Thus we can choose an integral symplectic basis $A,B$ in $H^1(E_{\sigma},\mathbb{Z})$, such that $\pi_A(\sigma),\pi_{B}(\sigma)$ are solutions of equation (\ref{eq:Picard-Fuchs1}). Here we have
\begin{equation}\label{hgs}
\begin{split}
&\pi_{A}(\sigma)=-\sigma^{-1}\,_2F_1(\frac{1}{3},\frac{2}{3};1;-\frac{27}{\sigma^3})\\
&\pi_{B}(\sigma)=-\frac{3}{2\pi\,i}\pi_{A}(\sigma)\log\,(-\sigma)+\frac{3}{2\pi\,i}\,\sum_{k=1}^\infty\,b_k(-\sigma/3)^{-3k-1}.
\end{split}
\end{equation}
The coefficient in $\pi_{B}(\sigma)$ is determined by the monodromy around $\sigma=\infty$.
The parameter of the marginal deformation is $\tau=\frac{\pi_B(\sigma)}{\pi_A(\sigma)}$. The mirror map is defined by $q:=e^{2\pi i \tau/3}.$

We identify the correspondence between generators of the Chen-Ruan cohomology $H^*_{CR}(\mathbb{P}^1_{3,3,3})$ and the basis of the Frobenius manifold structure at $\sigma\longrightarrow\infty$:
\begin{eqnarray*}
\quad 1 &\mapsto& 1,\\
\cal{P}    &\mapsto& (\sigma^3+27)\pi_A^2(\sigma)xyz,\\
27^{\deg\Delta_i-\frac{1}{3}}\Delta_i       &\mapsto& (-1)^{\deg\phi_i-\frac{1}{2}} (\sigma^3+27)^{\deg\phi_i}\pi_A(\sigma) \phi_i(x),\quad 1\leq
i\leq 6.
\end{eqnarray*}
Here $\Delta_1=\Delta_{x,1}$, $\Delta_2=\Delta_{y,1}$, $\Delta_3=\Delta_{z,1}$.
Recall that the residue pairing at $\sigma\longrightarrow\infty$ is
\begin{equation*}
{\rm res}_{\textbf{x}=0}\ \frac{xyz}{f_{x}f_{y}f_{z}}\,d^3x = \frac{1}{\sigma^3+27}
\end{equation*}
Thus the pairing matches up. On the other hand, we set
$$
\LD\Delta_{x,1},\Delta_{y,1},\Delta_{z,1}\RD_{0,3}^{\mathbb{P}^1_{3,3,3}}
:=\sum_{d=0}^{\infty}\LD\Delta_{x,1},\Delta_{y,1},\Delta_{z,1}\RD_{0,3,d}^{\mathbb{P}^1_{3,3,3}}q^d
$$
Then the mirror map between the three-point correlators is
\begin{eqnarray*}
\LD\Delta_{x,1},\Delta_{y,1},\Delta_{z,1}\RD_{0,3}^{\mathbb{P}^1_{3,3,3}} &\mapsto& -i\pi_{A}(\sigma)=q+o(q)\\
\LD\Delta_{x,1},\Delta_{x,1},\Delta_{x,1}\RD_{0,3}^{\mathbb{P}^1_{3,3,3}} &\mapsto& \frac{i\sigma}{3}\pi_A(\sigma)=\frac{1}{3}+o(q)
\end{eqnarray*}
The leading term of the Fourier expansion of the two correlators  are $q$ and $\frac{1}{3}$.
According to the reconstruction theorem \ref{thm:GW recons}, this implies the CY-LG mirror symmetry of all genera for $\mathbb{P}^1_{3,3,3}$.
\end{proof}

\subsubsection{Proof of LG-to-CY mirror theorem for the $\mathbb{P}^1_{4,4,2}$ case.}

\begin{proof}
According to \cite{MR}, the Picard-Fuchs equation is the same as before. For the mirror maps, we refer the readers to
\cite{MR}. Especially, in this case, for the marginal direction, $q:=e^{2\pi i\tau/4}$, $\cal{P}$ is mapped to $\frac{4}{9}(27+\sigma^3)xyz\pi_{A}^2(\sigma)$. Thus the mirror map gives the same pairing, and assigns the genus-0 3-point correlators as follows:
\begin{eqnarray*}
\LD\Delta_{x,1},\Delta_{y,1},\Delta_{z,1}\RD_{0,3} &\mapsto& -i(\sigma^2-4)x^2y^2\pi_{A}^3(\sigma)=q+2q^5+o(q^5)\\
\LD\Delta_{x,2},\Delta_{x,1},\Delta_{x,1}\RD_{0,3} &\mapsto& -i(\sigma^2-4)x^3\pi_A^3(\sigma)=\frac{1}{4}+q^4+o(q^4)
\end{eqnarray*}
According to the reconstruction theorem \ref{thm:GW recons}, by comparing the leading term of the expansion of $q$, the LG-to-CY mirror symmetry holds for $\mathbb{P}^1_{4,4,2}$.
\end{proof}

\subsubsection{Proof of LG-to-CY mirror theorem for the $\mathbb{P}^1_{6,3,2}$ case.}
\begin{proof}
The Picard-Fuchs equation is again the same.
Applying the map introduced in \cite{MR}. For the marginal direction, $q=e^{2\pi i\tau/6}$, and $\cal{P}$ is mapped to $\frac{2}{3}(27+\sigma^3)xyz\pi_{A}^2(\sigma)$. It is easy to check that the ring structural constants match and
\begin{align*}
&\LD\Delta_{x,1},\Delta_{y,1},\Delta_{z,1}\RD_{0,3}^{\mathbb{P}^1_{6,3,2}}  &\mapsto\,q+2q^7+o(q^7)\\
&\LD\Delta_{y,2},\Delta_{y,1},\Delta_{y,1}\RD_{0,3}^{\mathbb{P}^1_{6,3,2}}  &\mapsto\frac{1}{3}+2q^6+o(q^6)\\
&\LD\Delta_{x,4},\Delta_{x,1},\Delta_{x,1}\RD_{0,3}^{\mathbb{P}^1_{6,3,2}}  &\mapsto\frac{1}{6}+q^6+o(q^6)\\
\end{align*}
By comparing the leading term of the expansion of $q$, the LG-to-CY mirror symmetry holds for $\mathbb{P}^1_{6,3,2}$.
\end{proof}
\subsection{Convergence of Gromov-Witten theory}

%For Gromov-Witten ancestor invariant of $\mathscr{X}$, let us denote $$a_{I}^{g}(\tau):=\LD\LD\tau_{l_1}(\alpha_1),\cdots,\tau_{l_n}(\alpha_n)\RD\RD_{g,n}(\tau\cal{P})$$
%Where $I=(i_{0},i_{1},\cdots)$, $i_{k}=(i_{k,-1},i_{k,0},\cdots,i_{k,\mu-2})\in\mathbb{Z}^{\mu}$, $$i_{k,j}=\sum_{m:\alpha_m=\phi_j,l_m=k}1.$$
%We define $m(I)$ is the weight of the correlator.$$m(I):=\sum_{k}(2i_{k,-1}+i_{k,1}+\cdots+i_{k,\mu-2})$$
%We denote $w(I):=2g-2+m(I)$, which is the weight of the corresponding quasi-modular form in \cite{MR}. Let us denote $$I_{g,n,d}^{GW}:=\max_{l_1,\cdots,l_n,\alpha_1,\cdots,\alpha_n}\Big|\LD\tau_{l_1}(\alpha_1),\cdots,\tau_{l_n}(\alpha_n)\RD_{g,n,d}\Big|$$
%\begin{equation}I_{g,n,d}^{GW}\leq\frac{C_0^{n+d-4}}{(2g+n+d-2)(2g+n+d-1)}\end{equation}
Recall $\mathscr{B}$ is the set of \emph{canonical generators} of $H^*_{CR}(\mathscr{X})$ introduced in Section 4.1. We define
\begin{equation}
I_{g,n,d}^{GW}:=\max_{\alpha_i\in\mathscr{B}}\Big|\LD\alpha_1,\cdots,\alpha_n\RD_{g,n,d}\Big|
\end{equation}
We assume that for $l_i=0$, $\alpha_i\neq\cal{P}$, thus we can define
\begin{equation}
I_{g,n,k,d,L}^{GW}:=\max_{\begin{subarray}{L}
l=\sum_{i}l_i\\\alpha_i\in\mathscr{B}
\end{subarray}
}\Big|\LD\tau_{l_1}(\alpha_1),\cdots,\tau_{l_n}(\alpha_n),\cal{P},\cdots,\cal{P}\RD_{g,n+k,d}\Big|
\end{equation}
Now let us define the \emph{length} of a genus-0 Gromov-Witten correlator.
\begin{df}
We say the correlator is of \emph{length} $0$ if it contains an insertion $\cal{P}$. We say it is of \emph{length} $m $ $(m\geq1)$ if after applying at most $m$ times WDVV equation, it can be reconstructed from linear products of length $0$ correlators, or genus-0 correlators with fewer marked points or lower degree.
\end{df}
Let $I_{0,n+3,d}^{GW}(m)$ be the maximum absolute value of all genus-0 $(n+3)$-points correlators with degree $d$ and length $m$. Let $I_{0,n+3,d}^{GW}(-1)$ be the maximum absolute value of $I_1+I_2+I_3$ in the WDVV equation (\ref{eq:WDVV-GW}). Thus
\begin{equation}\label{eq:length-GW}
I_{0,n+3,d}^{GW}(m)\leq3I_{0,n+3,d}^{GW}(m-1)+I_{0,n+3,d}^{GW}(-1), m\geq1
\end{equation}
By carefully using the reconstruction, we have the following estimation,
\begin{lm}\label{lm:basic}
We have
$$I_{0,3,0}^{GW}\leq1, I_{0,4,0}^{GW}\leq1/4$$
For $d\neq0$, we have:
\begin{eqnarray*}
\begin{split}
&I_{0,3,d}^{GW}\leq\,d^{-1}C^{d-1}\\
&I_{0,4,d}^{GW}\leq\,C_0\,d^{-1}C^{d-1}
\end{split}
\end{eqnarray*}
where $C_0$ is a number depends on $g$, $C_0^2\ll C$.
\end{lm}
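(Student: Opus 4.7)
The plan is strong induction on the degree $d$, with $d=0$ handled directly and $d\geq 1$ reduced to lower-degree correlators by the reconstruction machinery of Lemmas \ref{lm:vanish n=4} and \ref{lm:key-GW}. The base case $d=0$ is essentially read off from the Frobenius algebra: every 3-point correlator at $d=0$ is a Chen-Ruan product, of the form $|\alpha|^{-1}$ by (\ref{eq:Chen-Ruan}), giving $I^{GW}_{0,3,0}\leq 1$; every basic 4-point correlator at $d=0$ either vanishes (Type 1, Lemma \ref{lm:vanish n=4}) or equals $-|\gamma|^{-2}$ (Types 3 and 5), whose worst value across the three orbifold families is $1/4$, attained on $\mathbb{P}^1_{4,4,2}$ and $\mathbb{P}^1_{6,3,2}$ when $|\gamma|=2$. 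Non-basic 4-point correlators at $d=0$ reduce to basic ones by a uniformly bounded number of WDVV applications and thus inherit the same $O(1)$ bound.

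\textbf{Inductive step for 3-point correlators.} Fix $d\geq 1$ and assume the stated bounds for all $d'<d$. The divisor axiom rewrites a 3-point correlator at degree $d$ as $d^{-1}\LD\gamma_1,\gamma_2,\gamma_3,\cal{P}\RD_{0,4,d}$, and the WDVV reduction leading to (\ref{eq:Type6}) --- performed with a primitive $\beta$ chosen so that $\deg\gamma_i+\deg\beta'\geq 1$ kills the $I_0,I_1,I_3$ contributions --- yields
\begin{equation*}
I^{GW}_{0,3,d}\;\leq\;\frac{2|\beta|}{d^2}\sum_{i=1}^{d-1}(d-i)\,I^{GW}_{0,3,d-i}\,I^{GW}_{0,4,i}\;\leq\;\frac{2|\beta|\,C_0\,C^{d-2}}{d^2}\sum_{i=1}^{d-1}\frac{1}{i}.
\end{equation*}
Since $\sum_{i=1}^{d-1}i^{-1}\leq 1+\log d\leq d$, this is $\leq d^{-1}C^{d-1}$ once $C\geq 2|\beta|C_0$; the $d=1$ base is covered by the single nonvanishing input $\LD\Delta_{x,1},\Delta_{y,1},\Delta_{z,1}\RD_{0,3,1}=1$ from (\ref{eq:n=3,d=1}).

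\textbf{Inductive step for 4-point correlators.} A parallel recursion for each of the six basic 4-point types (case analysis as in the proof of Lemma \ref{lm:key-GW}), combined with the bounded-length reduction of non-basic 4-point correlators to basic ones, produces a bound of the form
\begin{equation*}
I^{GW}_{0,4,d}\;\leq\;\frac{K\,C^{d-2}}{d^2}\sum_{i=1}^{d-1}\Big(\tfrac{C_0^2}{i(d-i)}+\tfrac{C_0}{i}\Big)
\end{equation*}
for an orbifold-dependent constant $K$. Using the identity $\sum_{i=1}^{d-1}\tfrac{1}{i(d-i)}=\tfrac{2}{d}\sum_{i=1}^{d-1}\tfrac{1}{i}$ and absorbing $K$ into $C_0$, the desired bound $I^{GW}_{0,4,d}\leq C_0\,d^{-1}C^{d-1}$ closes once $C\gg C_0^2$.

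\textbf{Main obstacle.} The hardest part is that the WDVV recursion for 4-point correlators genuinely produces cross-products $I^{GW}_{0,4,d-i}\cdot I^{GW}_{0,4,i}$ through the $I_2$ piece of (\ref{eq:WDVV-GW}), so the induction for $I^{GW}_{0,4,d}$ is quadratic in the bounding constant; this is exactly what forces $C_0^2\ll C$ in the statement. Secondary bookkeeping involves uniform (in $d$) control on the reconstruction length from general to basic correlators and on the finitely many Chen-Ruan structure constants appearing as WDVV coefficients, both absorbable into a single orbifold-dependent $C_0$.
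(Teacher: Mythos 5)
Your proposal is correct and follows essentially the same route as the paper's Appendix B: induction on the degree $d$, feeding the type-by-type WDVV recursions of Lemma \ref{lm:key-GW} and the bounded-length reduction (\ref{eq:length-GW}) into harmonic-sum estimates such as $\sum_{i=1}^{d-1}\frac{1}{i(d-i)}=\frac{2}{d}\sum_{i=1}^{d-1}\frac{1}{i}$. The only minor discrepancy is your attribution of the condition $C_0^2\ll C$ to $(4\text{-point})\times(4\text{-point})$ cross terms inside this recursion --- for a $4$-point correlator the $I_2$ splitting always factors as a $3$-point times a $4$-point, so the degree recursion here is linear in $C_0$, and the quadratic constraint is really consumed in the later genus-one and many-point estimates --- but this over-estimate is harmless and does not affect the validity of your argument.
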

We give the proof in the appendix. Now we can continue on with more insertions,
\begin{lm}
For $n+d\geq4,$ we have:
\begin{equation*}
I_{0,n,d}^{GW}\leq
\left\{
\begin{array}{ll}
d^{n-5}C^{n+d-4}, &d\geq1. \\
C^{n-4},&d=0.
\end{array}\right.
\end{equation*}
\end{lm}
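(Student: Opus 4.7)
The plan is a double induction on $(n,d)$ ordered by $n+d$, with base cases $n\in\{3,4\}$ supplied by Lemma \ref{lm:basic}. At each step I apply the WDVV equation and control the length filtration via (\ref{eq:length-GW}), separating the cases $d=0$ and $d\geq 1$.

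For $d=0$ the degree-zero WDVV (\ref{eq:WDVV-GW d=0}) produces no splitting terms, only a regrouping of Chen--Ruan products among the $(n+3)$ insertions. Iterating this regrouping, some Chen--Ruan product eventually hits the unit $\mathbf{1}$, after which the string equation strictly lowers $n$ by one and the induction hypothesis $I_{0,n+2,0}^{GW}\le C^{n-2}$ takes over. The number of WDVV iterations needed is bounded in terms of the maximum Chen--Ruan complex degree $Q$ of a non-divisor class (at most $5/6$ for our three orbifolds), so the length recursion yields $I_{0,n+3,0}^{GW}\le K\cdot I_{0,n+2,0}^{GW}$ for an absolute constant $K$, closing the induction once $C\geq K$.

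For $d\geq 1$ the full WDVV (\ref{eq:WDVV-GW}) contributes splitting sums $I_1, I_2, I_3$ which decompose the correlator into products of two correlators with strictly fewer marked points and, in $I_2$, strictly smaller degree in each factor. For the interior regime $2\le a\le n-2$, $1\le i\le d-1$, the inductive hypothesis gives
\begin{equation*}
|I_2|\le K_1 C^{n+d-2}\sum_{a=2}^{n-2}\binom{n}{a}\sum_{i=1}^{d-1}(d-i)^{a-2}i^{n-a-2},
\end{equation*}
and the formal Chu--Vandermonde identity $\sum_a\binom{n}{a}x^{a-2}y^{n-a-2}=(x+y)^n/(x^2y^2)$ together with $\sum_{i=1}^{d-1}((d-i)i)^{-2}=O(d^{-2})$ shows that this right-hand side is $O(d^{n-2}C^{n+d-2})$, matching the target $d^{(n+3)-5}C^{(n+3)+d-4}$ once $C$ is large. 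Boundary splittings with $a\in\{0,1,n-1,n\}$ or with $i\in\{0,d\}$ (which arise in $I_1$ and $I_3$) involve a 3- or 4-point factor, handled by the companion bounds $I_{0,3,d}^{GW}\le d^{-1}C^{d-1}$ and $I_{0,4,d}^{GW}\le C_0 d^{-1}C^{d-1}$ from Lemma \ref{lm:basic}. The length recursion (\ref{eq:length-GW}) then transfers the estimate to $I_{0,n+3,d}^{GW}$ up to a universal multiplicative constant absorbed into $C$.

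The main obstacle is the uniform choice of a single constant $C$, depending only on the genus (here $0$), that simultaneously absorbs the length iteration, the number of terms in $I_1+I_2+I_3$, the dimension of the state space contributing via $\eta^{\mu,\nu}$, and the constant $C_0$ appearing whenever a 4-point correlator is one of the factors in a boundary splitting. The hypothesis $C_0^2\ll C$ from Lemma \ref{lm:basic}, combined with the Chu--Vandermonde-type identity above, is what keeps the combinatorial sums polynomial in $d$ of the predicted order $d^{n-2}$ rather than exponential, and ensures the induction closes with a single choice of $C$.
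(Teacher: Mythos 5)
Your $d\geq 1$ argument is essentially the paper's: induction on the number of marked points from the base cases of Lemma \ref{lm:basic}, the splitting of the WDVV contribution into $I_1,I_2,I_3$, the interior estimate via $\sum_{j}\binom{n}{j}i^{j}(d-i)^{n-j}=d^{n}$ (this is the binomial theorem rather than Chu--Vandermonde, but it is the right computation) combined with $\sum_{i=1}^{d-1}i^{-2}(d-i)^{-2}\le 6\,d^{-2}$, and the transfer through the length recursion (\ref{eq:length-GW}). The one input you leave implicit is that the length-zero correlators, which seed that recursion, are controlled by the divisor equation, $I^{GW}_{0,n+3,d}(0)\le d\,I^{GW}_{0,n+2,d}$; you should state this.

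The genuine gap is your $d=0$ case. A Chen--Ruan product of two insertions of positive degree is never the unit: by (\ref{eq:Chen-Ruan}) it is a higher twisted sector at the same point, a multiple of $\cal{P}$, or zero, so its degree is strictly positive. Hence your termination mechanism --- iterate the regrouping until a product hits $\mathbf{1}$, then apply the string equation --- never fires; and even if a unit insertion did appear, the string equation would annihilate a primary correlator with at least four marked points rather than lower $n$ by one, so no recursion of the form $I^{GW}_{0,n+3,0}\le K\,I^{GW}_{0,n+2,0}$ is produced. What actually closes the induction at $d=0$ is the splitting part of the degree-zero WDVV, i.e. the terms $\LD\cdots\RD_{0,|A|+3,0}\,\eta^{\mu,\nu}\,\LD\cdots\RD_{0,|B|+3,0}$ with $A,B\neq\emptyset$, which give
\begin{equation*}
I^{GW}_{0,n+3,0}\le \sum_{i=1}^{n-1}C_0\binom{n}{i}\,I^{GW}_{0,3+i,0}\,I^{GW}_{0,n+3-i,0}\le C_0\,2^{n-1}C^{n-2}\le C^{n-1};
\end{equation*}
the displayed identity (\ref{eq:WDVV-GW d=0}) suppresses these terms only in the particular reconstruction steps where they vanish, not as a general estimate-ready identity. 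This is not cosmetic: the degree-zero bound $I^{GW}_{0,m,0}\le C^{m-4}$ is an input to your own estimates of $I_1$ and $I_3$ for $d\ge 1$ (each contains a degree-zero factor), so without repairing the $d=0$ step the induction for $d\ge1$ does not close either. The repair is simply to replace the regrouping/string-equation argument by the splitting bound above.
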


\begin{proof}
Lemma \ref{lm:basic} implies the estimation holds for $n\leq4$. We assume it holds for $\leq\,n+2$, where $n\geq2$. Now we prove it for $n+3$.
%We observe $w(I)=2g-2+n-N_1+N_2$, where $N_1$ is the number of insertions $1\in\,H^0_{CR}(\cal{\mathscr{X}})$, $N_2$ is the number of insertions $\cal{P}$. If $N_1\neq0$, then the correlator vanish except $n=3$.
We recall the terms in the WDVV equation (\ref{eq:WDVV-GW}).
For $n\geq2, d=0$, we have
\begin{equation}
I_{0,n+3,0}^{GW}\leq\sum_{i=1}^{n-1}C_0 {n\choose i}\,I_{0,3+i,0}^{GW}I_{0,n+3-i,0}^{GW}\leq\,C_0\,2^{n-1}\,C^{n-2}\leq\,C^{n-1}
\end{equation}
For $d\geq1$, we calculate $I_{0,n+3,d}^{GW}(-1)$ and $I_{0,n+3,d}^{GW}(0)$ first. The divisor equation implies
\begin{equation}
I_{0,n+3,d}^{GW}(0)
\leq\,d\,I_{0,n+2,d}^{GW}
\leq\,d^{n-2}\,C^{n+d-2}
\end{equation}
Next, we have
\begin{eqnarray*}\begin{split}
\Big|I_1\Big|
&\leq72\sum_{j=1}^{n-1}{n\choose j}I_{0,j+3,d}^{GW}I_{0,n+3-j,0}^{GW}\leq72\sum_{j=1}^{n-1}{n\choose j}d^{j-2}C^{n+d-2}\leq72*2^{n}d^{n-2}C^{n+d-2}\\
\Big|I_2\Big|
&\leq36\sum_{i=1}^{d-1}\sum_{j=0}^{n}{n\choose j}I_{0,j+3,d-i}^{GW}I_{0,n+3-j,i}^{GW}
\leq36\sum_{i=1}^{d-1}\sum_{j=0}^n{n\choose j}(d-i)^{j-2}i^{n-j-2}C^{n+d-2}\\
&\leq288d^{n-2}C^{n+d-2}\\
\Big|I_3\Big|
&\leq72I_{0,3,d}^{GW}I_{0,n+3,0}^{GW}\leq72d^{-2}C^{n+d-2}
\end{split}\end{eqnarray*}
For the estimation of $I_2$, we use for any $1\leq\,i\leq\,d$,
$$
\sum_{j=1}^{n-1}{n\choose j}(\frac{i}{d})^{j}(\frac{d-i}{d})^{n-j}\leq1
$$
and
\begin{equation}\label{eq:minus-square}
\sum_{i=1}^{d-1}i^{-2}(d-i)^{-2}\leq6\,d^{-2}
\end{equation}
Now we have,
$$
I_{0,n+3,d}^{GW}(-1)\leq\Big|I_1+I_2+I_3\Big|\leq18\frac{2^{n+2}+16+4}{C}d^{n-2}C^{n+d-1}
$$
It is easy to see the maximum length is less than $12$, by using (\ref{eq:length-GW}) repeatedly, we know
\begin{eqnarray*}
\begin{split}
I_{0,n+3,d}^{GW}&\leq3^{12}I_{0,n+3,d}^{GW}(0)+\frac{3^{12}-1}{2}I_{0,n+3,d}^{GW}(-1)
%\leq\frac{3^{12}+(3^{12}-1)(2^{n+1}+10)}{C}d^{n-2}C^{n+d-1}
\leq d^{n-2}C^{n+d-1}
\end{split}
\end{eqnarray*}

\end{proof}

\begin{lm}
For genus-1 primary correlators, for $n\geq1$, we have
\begin{equation*}
I_{1,n,d}^{GW}\leq\left\{
\begin{array}{ll}
d^{2n-3}\,C^{n+2d-2},&
  \text{if} \,d\neq0. \\
1, & \text{if} \,d=0.
\end{array}
\right.
\end{equation*}
\end{lm}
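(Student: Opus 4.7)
My plan is to reduce the bound to a statement about the single one-pointed invariant $\langle\mathcal{P}\rangle_{1,1,d}^{\mathscr{X}}$ and then prove that by induction on $d$ using the Getzler recursion already set up in the proof of Theorem 4.4. First I would invoke the selection rule and dimension formula~(\ref{form:GW dim}): for a nonzero genus-$1$ primary correlator with $d\neq 0$, every insertion must have complex degree $1$, hence every insertion must be $\mathcal{P}$. The divisor equation~(\ref{eq:divisor}) then gives
\begin{equation*}
\langle\mathcal{P},\ldots,\mathcal{P}\rangle_{1,n,d}^{\mathscr{X}}=d^{n-1}\langle\mathcal{P}\rangle_{1,1,d}^{\mathscr{X}},
\end{equation*}
so the lemma reduces to establishing a bound of the form $|\langle\mathcal{P}\rangle_{1,1,d}^{\mathscr{X}}|\leq d^{-1}C^{2d-1}$ (from which the desired $I_{1,n,d}^{GW}\leq d^{2n-3}C^{n+2d-2}$ follows trivially after enlarging $C$, since $d^{n-1}\cdot d^{-1}C^{2d-1}=d^{n-2}C^{2d-1}\leq d^{2n-3}C^{n+2d-2}$). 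For $d=0$ the only nonvanishing correlator is $\langle\mathcal{P}\rangle_{1,1,0}^{\mathscr{X}}=-1/24$, so $I_{1,n,0}^{GW}\leq 1$ is immediate.

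Next I would perform induction on $d\geq 1$ using the Getzler recursion displayed in the proof of Theorem 4.4. For $\mathbb{P}^1_{3,3,3}$ this recursion reads (after moving the leading term to one side)
\begin{equation*}
\tfrac{4}{3}\langle\mathcal{P}\rangle_{1,1,d}^{\mathbb{P}^1_{3,3,3}}=\int_{\delta_{3,4}}\Lambda_{1,4,d+1}^{\mathbb{P}^1_{3,3,3}}(\Delta_{2,2;1;1})-4\sum_{i=0}^{d-1}\langle\mathcal{P}\rangle_{1,1,i}^{\mathbb{P}^1_{3,3,3}}\langle\Delta_{2,2;1;1}\rangle_{0,4,d+1-i}^{\mathbb{P}^1_{3,3,3}},
\end{equation*}
and Getzler's relation~(\ref{eq:Getzler}) expresses the first term as a linear combination of integrals over $\delta_{2,2},\delta_{2,3},\delta_{2,4},\delta_{0,3},\delta_{0,4},\delta_\beta$, all of which produce only genus-$0$ primary correlators (already bounded in the previous lemma by $I_{0,n,d}^{GW}\leq d^{n-5}C^{n+d-4}$). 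The same structure holds for $\mathbb{P}^1_{4,4,2}$ and $\mathbb{P}^1_{6,3,2}$ with different combinatorial constants but identical shape; I would treat all three cases in parallel.

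The bookkeeping proceeds as follows. Splitting the stratum integrals by the splitting axiom produces sums of products of genus-$0$ correlators indexed by a partition of the total degree into two (or three) pieces $d_1+d_2(+d_3)=d+1$. Using the genus-$0$ bound, each term is controlled by a product $d_1^{a_1}d_2^{a_2}C^{\cdots+2d}$, and the partition sum $\sum_{d_1+d_2=d+1}d_1^{-2}d_2^{-2}\leq 6(d+1)^{-2}$ of~(\ref{eq:minus-square}) (and its obvious analogues) absorbs the combinatorics with a constant swallowed into $C$. For the induction term $\sum_{i=0}^{d-1}\langle\mathcal{P}\rangle_{1,1,i}\langle\Delta_{2,2;1;1}\rangle_{0,4,d+1-i}$, the inductive hypothesis gives $|\langle\mathcal{P}\rangle_{1,1,i}|\leq i^{-1}C^{2i-1}$ and the genus-$0$ bound gives $|\langle\Delta_{2,2;1;1}\rangle_{0,4,d+1-i}|\leq (d+1-i)^{-1}C^{d-i}$, whence the sum is $\leq 6d^{-2}C^{2d-1}$ using~(\ref{eq:minus-square}) again. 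Choosing $C$ large enough relative to the fixed Getzler coefficients closes the induction.

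The main obstacle is simply an accountant's problem: one must verify that after summing over all strata and all splittings the constants still fit in $C^{2d-1}$ with a prefactor of $d^{-1}$, which is why the exponent $2d-2$ (not $d$) appears in the statement---each splitting of a genus-$1$ class into genus-$0$ pieces doubles the effective contribution to the $C$-exponent. The only delicate step is checking that the sums $\sum_i(\cdots)^{-2}$ really produce an $O(d^{-2})$ factor so that the $d^{-1}$ prefactor survives; this is guaranteed by~(\ref{eq:minus-square}) and the fact that all genus-$0$ bounds carry an explicit inverse-degree factor. Once this is done, the lemma follows in all three cases, and together with the previous two lemmas it supplies the growth estimate needed to deduce convergence of the Gromov--Witten ancestor correlator function (part~(1) of Theorem~\ref{thm:convergent}) exactly as in the FJRW case.
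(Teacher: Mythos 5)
Your proposal follows essentially the same route as the paper's proof: reduce to $\langle\mathcal{P}\rangle_{1,1,d}$ by the divisor equation, dispose of $d=0$ via $\langle\mathcal{P}\rangle_{1,1,0}=-1/24$, and induct on $d$ through the Getzler recursion (\ref{eq:genus-1}), bounding the convolution term by the inductive hypothesis together with the genus-0 four-point estimate and bounding the remaining strata by the genus-0 lemma. One small inaccuracy: the convolution $\sum_{i}i^{-1}C^{2i-1}(d+1-i)^{-1}C^{d+1-i}$ is only $O(d^{-1})C^{2d-1}$ (it is dominated by the top term $i=d-1$, with lower $i$ suppressed geometrically in $C$), not $O(d^{-2})C^{2d-1}$ as you assert by appealing to (\ref{eq:minus-square}) --- which involves squared reciprocals and does not apply here --- but since $O(d^{-1})C^{2d-1}$ is exactly what the induction requires, the argument still closes.
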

\begin{proof}
The nonvanishing terms are just $\LD \cal{P},\cdots, \cal{P}\RD_{1,n,d}^{\mathcal{X}}$. For $d=0$, it is easy. For $d\neq0$, the divisor axiom implies we only need to verify for $n=1$.
%\begin{equation*}\Big|\LD\cal{P}\RD_{1,1,d}^{\mathcal{X}}\Big|\leq\,d^{-1}C^{2d-1}\end{equation*}
We prove the estimation for the $\mathscr{X}=\mathbb{P}^{1}_{3,3,3}$ case. The other two cases is similar. When we integrate $\Lambda_{1,4,d+1}^{\mathbb{P}^1_{3,3,3}}(\Delta_{2,2;1;1})$ over Getzler's relation, the genus-1 contribution only comes from $\delta_{3,4}$.
Recall equation (\ref{eq:genus-1}), for $d\geq3,$
\begin{eqnarray*}
\begin{split}
\Big|\sum_{i=0}^{d-1}\LD\cal{P}\RD_{1,1,i}\LD\Delta_{2,2;1;1}\RD_{0,4,d+1-i}\Big|
&\leq\sum_{i=1}^{d-1}i^{-1}C^{2i-1}(d+1-i)^{-1}C^{d+1-i}+(24d)^{-1}C^{d+1}\\
&\leq\frac{1}{6d}C^{2d-1}
\end{split}
\end{eqnarray*}
We also have
\begin{eqnarray*}
\begin{split}
\Big|\int_{\delta_{\beta}}\Lambda_{1,4,d+1}^{\mathbb{P}^1_{3,3,3}}(\Delta_{2,2;1;1})\Big|
&\leq\sum_{i=0}^{d+1}I_{0,4,i}^{GW}I_{0,4,d+1-i}^{GW}\leq\sum_{i=1}^{d+1}i^{-1}(d+1-i)^{-1}C^{2d}+2(d+1)^{-1}C^{d+1}\\
&\leq\frac{1}{d}C^{2d+1}
\end{split}
\end{eqnarray*}
Other genus-0 contributions are all bounded by $d^{-1}C^{2d+1}$. Thus the estimation follows from the Getzler relation.
\end{proof}

Here is the main estimation in this section,
\begin{thm}
Let us denote $\chi:=2g-2+n$. Then for $\chi\geq0$, we have:
\begin{equation}\label{eq:g-red GW}
I_{g,n,0,d,L}^{GW}\leq\left\{
\begin{array}{ll}
d^{\chi-2}C(g,n)^{\chi+(g+L+1)d-2}, &
  \text{if} \,d\neq0. \\
C(g,n)^{\chi-1},& \text{if} \,d=0.
\end{array}
\right.
\end{equation}
Here $C(g,n)$ is a sufficient large number which depends only on $g$ and $n$, satisfying:
\begin{equation*}
\left\{\begin{array}{ll}
C(0,3)=C(0,4)>>0&\\
C(g,n+1)>C(g,n)& \text{except\,for\,} g=0, n\leq3\\
C(g+1,n)>C(g,n)+1&
\end{array}\right.
\end{equation*}
\end{thm}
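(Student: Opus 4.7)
The plan is to prove the estimate by induction on $(g,n,L)$, ordered lexicographically on $g$ first, then on $n$, then on $L$. The base cases $g=0$ and $g=1$ are already supplied by the two preceding lemmas bounding $I_{0,n,d}^{GW}$ and $I_{1,n,d}^{GW}$: whenever $L=0$, the selection rule forces every $\alpha_i$ with $\deg\alpha_i<1$ to contribute in a way that kills the integrand unless $g\le 1$, and when $L=0$ with genus $\le 1$ the bound follows by iterating the divisor equation and matching the required exponent $\chi+(g+L+1)d-2$ against the prior estimates (the crude monotonicity conditions imposed on $C(g,n)$ clearly accommodate the polynomial $d^{-1}C^{d-1}$ or $d^{2n-3}C^{n+2d-2}$ bounds after substitution).

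For $g\ge 2$, the dimension axiom
\begin{equation*}
L+\sum_{i=1}^n \deg\alpha_i = 2g-2+n
\end{equation*}
forces $L\ge 2g-2\ge 2\ge g$, so Lemma \ref{lm:g-reduction} applies and $\Psi_{l_1,\dots,l_n}=\prod_i\bar\psi_i^{l_i}$ may be rewritten as a universal linear combination (with at most $N_{g,n}$ terms, $N_{g,n}$ depending only on $g,n$) of boundary cycles in $\overline{\mathscr{M}}_{g,n}$, each carrying at least one edge and with residual $\psi$/$\kappa$-classes of total degree $L-1$ on the irreducible components. Applying the splitting axiom with an intermediate basis $\{\phi_a\}$ of $H^*_{CR}(\mathcal{X})$, the integral decomposes into a bounded sum of products of ancestor correlators on the components. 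In the non-separating case one obtains a single factor attached to genus $g-1$ with $n+2$ marked points and $L-1$ units of $\psi$-degree; in the separating case two factors with
\begin{equation*}
g_1+g_2=g,\quad n_1+n_2=n,\quad L_1+L_2=L-1,\quad d_1+d_2=d,
\end{equation*}
plus splittings of the dual basis sum at the node (bounded by $\dim H^*_{CR}$, absorbed into $C(g,n)$).

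The induction hypothesis bounds each factor. The core combinatorial step is to verify that the resulting sum fits under $d^{\chi-2}C(g,n)^{\chi+(g+L+1)d-2}$. The $d$-convolution reduces, via the inequalities $g_1+L_1+1\le g+L+1$ and $g_2+L_2+1\le g+L+1$ (these hold because $g_i\le g$ and $L_i\le L-1$), to an elementary sum
\begin{equation*}
\sum_{d_1+d_2=d} d_1^{\chi_1-2}d_2^{\chi_2-2}\le C'\,d^{\chi-2},
\end{equation*}
controlled by the same kind of polynomial estimate as \eqref{eq:minus-square} used in the genus-$1$ lemma; the non-separating case gives a single term of the correct shape because $\chi(g-1,n+2)=\chi(g,n)$. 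The exponent of $C(g,n)$ then tracks as $(\chi_1-2)+(\chi_2-2)+(g_1+L_1+1)d_1+(g_2+L_2+1)d_2+\text{(constant)}\le \chi+(g+L+1)d-2$, provided $C(g,n)$ is chosen so large that it dominates $N_{g,n}$, the constants $C(g',n')$ for the finitely many smaller $(g',n')$, and the polynomial prefactors.

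The main obstacle is precisely this calibration of $C(g,n)$: the inductive step must absorb (i) the universal combinatorial factor $N_{g,n}$ from the $g$-reduction, (ii) the polynomial slack from the $d$- and $k$-convolutions (Chu--Vandermonde takes care of the $k$-sum of extra $\mathcal{P}$-insertions that may propagate from the generalization of the statement to $k\ge0$ used implicitly in the splitting), and (iii) the products of previously established constants from the two subcomponents, without disturbing the exponent $\chi+(g+L+1)d-2$. The monotonicity conditions $C(g,n+1)>C(g,n)$ and $C(g+1,n)>C(g,n)+1$ are stated exactly so this absorption is possible at every step; once this is arranged, the remainder of the argument parallels the FJRW convergence lemma and presents no further difficulty.
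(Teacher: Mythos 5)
Your strategy---induct on $(g,n,L)$ and peel off one edge of the $g$-reduction at a time, in the style of the FJRW convergence lemma---is a legitimate route, but it is not the one the paper takes, and as written it leaves two concrete points unresolved. The paper applies $g$-reduction \emph{once and for all}: since every insertion has complex degree at most $1$, the selection rule forces $\deg\Psi_{l_1,\dots,l_n}\ge 2g-2$, so the entire $\psi$-monomial is traded in a single step for $N$ dual graphs \emph{all} of whose components carry genus-$0$ or genus-$1$ primary correlators; the estimate is then one convolution, $\sum_{d_0+\cdots+d_L=d}\prod_{d_i\neq0}d_i^{\chi_i-2}\le 8^Ld^{\chi-2}$, applied to the three preceding lemmas, and the only constants that ever appear are the universal genus-$\le 1$ ones, which are dominated by $C(0,3)$ and hence by every $C(g,n)$. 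That is what the one-shot reduction buys: no intermediate constants and no intermediate ancestor bounds are needed.

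By contrast, your inductive step requires bounds for intermediate correlators, and here are the gaps. First, after a single application of Lemma \ref{lm:g-reduction} the residual degree-$(L-1)$ classes on the components are mixed $\psi$- and $\kappa$-monomials, which are not covered by $I^{GW}_{g,n,0,d,L}$ as defined (it records only $\bar\psi$-powers); your inductive statement would have to be enlarged accordingly, as the paper does in the FJRW section but deliberately avoids here. Second, the constant calibration you defer is not a formality: the non-separating term is governed by $C(g-1,n+2)$ and a separating component by $C(g_i,n_i+1)$ with possibly $n_i=n$, and the stated monotonicity conditions do not force $C(g-1,n+2)\le C(g,n)$, while $C(g,n+1)\le C(g,n)$ is outright false under them; so the inductive bound does not visibly close under the hypotheses as given, and absorbing the discrepancy into the exponent slack would itself need an argument. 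Third, your displayed $d$-convolution omits the $d_i=0$ terms, each of which costs an extra power of $C$ (the $d=0$ bound is $C^{\chi_i-1}$, not $d_i^{\chi_i-2}C^{\cdots}$); the paper absorbs these $K\le L$ extra powers into $C^{Ld}$, available only because $d\ge1$, and this bookkeeping is precisely what produces the exponent $(g+L+1)d$ rather than $(g+1)d$ in the statement. Your plan can likely be completed, but these are the steps that must actually be carried out, and the paper's one-shot reduction is the shorter way around all three.
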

\begin{proof}
Recall the expression of the ancestors,
\begin{equation*}
\LD\tau_{l_1}(\alpha_1),\cdots,\tau_{l_n}(\alpha_n)\RD_{g,n,d}
=\int_{\overline{\mathscr{M}}_{g,n}}\Psi_{l_1,\cdots,l_n}\cdot\Lambda_{g,n,d}^{\mathscr{X}}(\alpha_1,\cdots,\alpha_n)
\end{equation*}
where $\Psi_{l_1,\cdots,l_n}=\prod_i\bar\psi_i^{l_i}$. We denote by $\deg\Psi_{l_1,\cdots,l_n}=\sum_{i}l_i:=L$.
Now we use $g$-reduction. According to degree counting for elliptic orbifold $\mathbb{P}^1$, the integration can be expressed as linear combinations of primary correlators with genus 0 or 1,
\begin{equation*}
\int_{\overline{\mathscr{M}}_{g,n}}\Psi_{l_1,\cdots,l_n}\cdot\Lambda_{g,n,d}^{\mathscr{X}}(\alpha_1,\cdots,\alpha_n)
=\sum_{j=1}^{N}\int_{\Gamma_j}\Lambda_{g,n,d}^{\mathscr{X}}(\alpha_1,\cdots,\alpha_n)
\end{equation*}
where $\Gamma_j$ is a dual graph with $v(\Gamma_j)=L$. Each component of $\Gamma_j$ is either of genus-0 or genus-1. $N$ is the number of such dual graphs, depends only on genus $g$ and number of marked points $n$.
For simplification, we always denote $C$ as the $C(g,n)$ for the corresponding correlator we want to estimate.

If $L=0$, the result follows from the previous three lemmas. For $L\geq1$, $d=0$,
\begin{eqnarray*}
\begin{split}
\Big|\int_{\Gamma_j}\Lambda_{g,n,0}^{\mathscr{X}}(\alpha_1,\cdots,\alpha_n)\Big|
\leq\,6^{L}\prod_{i=0}^{L}I_{g_i,n_i+k_i,0}^{GW}
\leq\,6^{L}\prod_{i=0}^{L}C^{\chi_i-1}
\leq\,6^{L}C^{\chi-1-L}
\end{split}
\end{eqnarray*}
where $6^L$ is the upper bound for the products of pairing factors $\eta^{\mu,\nu}$ from $L$ nodes. Thus we have
\begin{equation*}
I_{g,n,0,0,L}^{GW}\leq\frac{6^L\,N}{C^L}C^{\chi-1}\leq\,C^{\chi-1}
\end{equation*}

For $d\geq1$, we consider for each dual graph $\Gamma_j$, with $k_j$ the number of nodes for $\Gamma_j$. Thus we know $k_j\geq1, \sum_{i}k_j=2L$.
Here we need to deal with those terms with $d_i=0$, as the exponent of $C(g,n)$ in the estimation will increase by $1$ in these cases.
We generalize (\ref{eq:minus-square}) and will have the following inequality:
\begin{equation*}
\sum_{d_0+\cdots+d_L=d}\prod_{d_i\neq0}d_i^{\chi_i-2}\leq8^{L}d^{\chi-2}.
\end{equation*}
Let $K$ be the number of $d_i$ which is $0$, then $K\leq\,L$. Now we have the estimation,
\begin{eqnarray*}
\begin{split}
\Big|\int_{\Gamma_j}\Lambda_{g,n,d}^{\mathscr{X}}(\alpha_1,\cdots,\alpha_n)\Big|
\leq&\sum_{d_0+\cdots+d_L=d}6^{L}\prod_{i=0}^{L}I_{g_i,n_i+k_i,d_i}^{GW}\\
\leq&\sum_{d_0+\cdots+d_L=d}6^{L}C^{K}\prod_{d_i\neq0}d_i^{\chi_i-2}\prod_{i=0}^{L}C^{\chi_i+(g_i+1)d_i-2}\\
\leq&6^{L}8^{L}d^{\chi-2}C^{\chi+(g+1)d-2}
\end{split}
\end{eqnarray*}
Now (\ref{eq:g-red GW}) follows from $L\cdot\,d\neq0$ in this case.
\end{proof}

Now we prove the convergence of the Gromov-Witten part in Theorem \ref{thm:convergent}.
\begin{proof}
%Pay attention to the case where there is $\cal{P}$ as insertion on the nodes, then the insertion on the other side is $1$, we can apply the string equation. For primary correlator, it does not vanish unless that component is of genus 0 with three marked points, and two insertions are dual to each other..
%\begin{eqnarray*}
%\begin{split}
%&\LD\gamma_1,\cdots,\gamma_{k},\alpha,\beta,\epsilon\star\eta\RD_{0,k+3,d}\\
%&=\LD\gamma_1,\cdots,\gamma_{k},\alpha,\beta,?\RD_{0,k+3,d}\LD?,\epsilon,\eta\RD_{0,3,0}
%+\sum_{i=1}^{d}\LD\gamma_1,\cdots,\gamma_{k},\alpha,\beta,?\RD_{0,k+3,d-i}\LD?,\epsilon,\eta\RD_{0,3,i}\\
%&\leq\cdots+\sum_{i=1}^{d}\frac{1}{(k+d-i)^2}C^{k+d-i-1}\frac{1}{i^2}C^{i-1}\\
%&\leq\cdots+\frac{8}{(k+d)^{2}}C^{k+d-2}\\
%\end{split}
%\end{eqnarray*}
%Here we use $$\sum_{i=1}^{d}\frac{1}{(k+d-i)^{2}i^{2}}\leq\frac{8}{(k+d)^{2}}.$$
%\begin{eqnarray*}
%\begin{split}
%&\sum_{i=1}^{d}\sum_{j=1}^{k-1}\LD\gamma_1,\cdots,\gamma_{j},\alpha,\beta,?\RD_{0,k+3-j,d-i}\LD?,\epsilon,\eta\RD_{0,3+j,i}\\
%&\leq\sum_{j=1}^{k-1}{k\choose j}\frac{8}{(k+d)^2}C^{k+d-2}
%\end{split}
%\end{eqnarray*}
%On the other hand, $k$ is bounded. For all three cases, degree formula implies $$k\leq\frac{2}{P}\leq12$$
For $\textbf{t}=s\cal{P}$, recall the Divisor equation for the ancestor correlators (\ref{eq:divisor}),
%\begin{eqnarray*}&&\LD\tau_{l_1}(\gamma_1)\,\cdots\,\tau_{l_n}(\gamma_n),\tau_1(1)\RD_{g,n+1,d}\\&&=(2g-2+n)\,\LD\tau_{l_1}(\gamma_1)\,\cdots\,\tau_{l_n}(\gamma_n)\RD_{g,n,d}\end{eqnarray*}
we have the estimation:
\begin{eqnarray*}
\begin{split}
&\Big|\LD\LD\tau_{l_1}(\alpha_1),\cdots,\tau_{l_n}(\alpha_n)\RD\RD_{g,n}(s\cal{P})\Big|\\
&=\Big|\sum_{d\geq0}\sum_{k\geq0}\frac{1}{k!}\LD\tau_{l_1}(\alpha_1),\cdots,\tau_{l_n}(\alpha_n),s\cal{P},\cdots,s\cal{P}\RD_{g,n+k,d}q^d\Big|\\
&\leq\Big|\LD\tau_{l_1}(\alpha_1),\cdots,\tau_{l_n}(\alpha_n)\RD_{g,n,0}\Big|+\Big|\sum_{d\geq1}\sum_{k\geq0}\frac{1}{k!}\LD\tau_{l_1}(\alpha_1),\cdots,\tau_{l_n}(\alpha_n)\RD_{g,n,d}q^ds^kd^k\Big|\\
%&\leq\sum_{d\geq0}\Big|q\,e^s\Big|^d\,I_{g,n,d}^{GW}\\
&\leq\sum_{d\geq0}\Big|q\,e^s\Big|^dd^{\chi-2}C(g,n)^{\chi+(g+L+1)d-2}
\end{split}
\end{eqnarray*}
This is convergent for $\Big|q\,e^s\,C(g,n)^{g+L+1}\Big|\leq1/2$.

Now we consider $\textbf{t}=s\cal{P}+\sum_{i\geq0}t_i\phi_i$, where $\phi_i$ ranges over the homogeneous basis other than $\cal{P}$. The dimension formula (\ref{form:GW dim}) implies the ancestor function $\LD\LD\tau_{l_1}(\alpha_1),\cdots,\tau_{l_n}(\alpha_n)\RD\RD_{g,n}(\textbf{t})$ is a polynomial of $t_i$ with coefficients are ancestor functions valued at $s\cal{P}$. As the number of terms of the monomials in this polynomial depends only on the genus $g$ and number of marked points $n$. It follows that for $g,n$ fixed, $\LD\LD\tau_{l_1}(\alpha_1),\cdots,\tau_{l_n}(\alpha_n)\RD\RD_{g,n}(\textbf{t})$ is convergent.
\end{proof}

\appendix
\section{Proof of Lemma \ref{lm:key-GW}}
\begin{proof}
We already gave the reconstruction for Type 1 and Type 6 correlators. Here are the proofs for other types case by case by using (\ref{eq:WDVV-GW}).
\begin{itemize}
\item For Type 2, if $d>0$, we consider $\LD\gamma',\gamma',\gamma,\gamma,\beta'\star\beta\RD_{0,5,d}$, $\beta\neq\gamma$. As $\deg\gamma\leq\deg\beta$, we have $\gamma'\star\gamma'=\gamma'\star\beta'=\gamma'\star\beta=0$.
Lemma \ref{lm:vanish n=4} implies $I_3$ also vanish. The reconstruction follows by
\begin{equation}\label{eq:Type2}
\LD\gamma',\gamma',\gamma,\gamma\RD_{0,4,d}
=\frac{|\beta|}{d}\LD\gamma',\gamma',\gamma,\gamma,\beta'\star\beta\RD_{0,5,d}
=\frac{|\beta|}{d}(I_1+I_2)
\end{equation}

\item
For Type 3 case, for $d>1$, we consider $\LD\alpha,\beta,\gamma,\gamma\star\gamma'\RD_{0,4,d}$, where $\gamma$ has the greatest order among all primitive elements. In this case, $I_0$ and $I_1$ both vanish. Thus Lemma \ref{lm:vanish n=4} implies $$I_3=-\LD\alpha,\beta,\gamma\RD_{0,3,d}\eta^{\gamma,\gamma'}\LD\gamma',\gamma,\gamma,\gamma'\RD_{0,4,0}
=|\gamma|^{-1}\LD\alpha,\beta,\gamma\RD_{0,3,d}$$
Thus we have
\begin{equation}\label{eq:Type3}
\LD\alpha,\beta,\gamma\RD_{0,3,d}=\frac{|\gamma|}{(d-1)}I_2
\end{equation}

\item For Type 4 case, we can first reduce to the case of $\LD\alpha,\beta,\beta^i\RD_{0,3,d}$, $d>0$. Now choose $\gamma$ the rest primitive element and apply (\ref{eq:WDVV-GW}) to $\LD\alpha,\beta^i,\beta,\gamma'\star\gamma\RD_{0,4,d}$. Then $I_0,I_1$ and $I_3$ all vanish. Thus the reconstruction follows by
\begin{equation}\label{eq:Type4}
\LD\alpha,\beta,\beta^i\RD_{0,3,d}=\frac{|\gamma|}{d}I_2
\end{equation}
\item
For Type 5, for $d>0$, by induction, we already know $\LD\alpha,\gamma,\beta\RD_{0,3,d+1}$ and Type 1 correlators with degree $d$ are recontructable.
Now we apply (\ref{eq:WDVV-GW}) to $\LD\alpha,\gamma,\beta,\beta\star\beta'\RD_{0,4,d}$. Except for $\LD\alpha,\gamma,\beta\RD_{0,3,1}\eta^{\beta,\beta'}\LD\beta',\beta,\beta,\beta'\RD_{0,4,d}$, we already know all the terms in the equality are reconstructable. This gives the recursion for $\LD\beta',\beta,\beta,\beta'\RD_{0,4,d}$.
\end{itemize}
\end{proof}
\section{Proof of Lemma \ref{lm:basic}}
\begin{proof}
The key idea of the estimation is use induction. As we already have the recursion formulas from Lemma \ref{lm:key-GW}. For each type of correlators, we induct on the degree $d$. It is easy to see the lemma holds true for $d\leq1$.
The computation is tedious but elementary. Here we only give the estimation for Type 1 and Type 6 correlators. Others are similarly obtained.
\begin{itemize}
\item For Type 1 correlators, we apply (\ref{eq:WDVV-GW}). Assume $M$ is the maximum length for Type 1 genus-0 4-point correlators. Thus inequality (\ref{eq:length-GW}) implies
\begin{eqnarray*}
\begin{split}
I_{0,4,d}&\leq3^{M}I_{0,4,d}(0)+\frac{3^{M}-1}{2}I_2\\
&\leq3^{M}d^{-1}C^{d-1}+(3^{M}-1)36\sum_{i=1}^{d-1}(d-i)^{-2}i^{-1}C^{d-2}\\
&\leq\,C_0 d^{-1}C^{d-1}\end{split}
\end{eqnarray*}

\item For Type 6 correlators, the equation (\ref{eq:Type6}) implies
\begin{equation*}
\Big|\LD\alpha^{j},\alpha,\alpha\RD_{0,3,d}\Big|
\leq\frac{72|\beta|}{d^2}\sum_{i=1}^{d-1}\frac{1}{d-i}C^{d-i-1}\frac{C_0}{i}C^{i-1}
\leq\frac{864C_0}{C\,d^2}C^{d-1}
\leq\,d^{-2}C^{d-1}
\end{equation*}

%\item For Type 2 correlators, we apply the inequality (\ref{eq:Type2}), \begin{equation*}\Big|\LD\gamma',\gamma',\gamma,\gamma\RD_{0,4,d}\Big|=\Big|\frac{|\beta|}{d}(I_1+I_2)\Big|\leq\frac{|\beta|}{d}(8\mu\,I_{0,4,d}I_{0,4,0}+\sum_{i=1}^{d-1}I_{0,4,d-i}I_{0,4,i})\end{equation*}
%or$$\Big|\langle\alpha',\alpha',\alpha,\alpha\rangle_{0,4,d}\Big|\leq\Big|I_2+I_3\Big|\leq\Big(\frac{8|\beta|^2}{C}+\frac{2|\beta|}{d|\alpha|}\Big)d^{-1}C^{d-1}$$Here the key block is $2|\alpha|^{-1}\langle\alpha',\alpha,\beta,\beta'\rangle_{0,4,d}$.

%\item For Type 3 correlators, the equation (\ref{eq:Type3}) implies for Type3 correlators.\begin{equation*}\Big|\LD\alpha,\beta,\gamma\RD_{0,3,d}\Big|\leq\Big|\frac{|\gamma|}{(d-1)}72\sum_{i=1}^{d-1}I_{0,4,d-i}I_{0,3,i}\Big|%\leq\Big|\frac{|\gamma|}{(d-1)}72\sum_{i=1}^{d-1}N(d-i)^{-1}i^{-2}C^{d-2}\Big|\leq\frac{N^2}{C}d^{-2}C^{d-1}\end{equation*}Also it works for Type 4 correlators.

%\item For Type 5 correlators \begin{eqnarray*}&\LD\alpha,\gamma,\beta\RD_{0,3,d+1}\\&=\frac{|\beta|}{d}(-\LD\alpha,\gamma,\beta\RD_{0,3,1}\eta^{\beta,\beta'}\LD\beta',\beta,\beta,\beta'\RD_{0,4,d}-\LD\alpha,\gamma,\mu\RD_{0,3,d}\eta^{\mu,\nu}\LD\nu,\beta,\beta,\beta'\RD_{0,4,1}\\&-\sum_{i=1}^{d}\LD\alpha,\gamma,\mu\RD_{0,3,d+1-i}\eta^{\mu,\nu}\LD\nu,\beta,\beta,\beta'\RD_{0,4,i}+\sum_{i=1}^{d}\LD\alpha,\beta,\mu\RD_{0,3,d+1-i}\eta^{\mu,\nu}\LD\nu,\beta,\gamma,\beta'\RD_{0,4,i}\\&+\sum_{i=1}^{d}\LD\alpha,\beta,\beta,\mu\RD_{0,4,d+1-i}\eta^{\mu,\nu}\LD\nu,\gamma,\beta'\RD_{0,3,i})\end{eqnarray*}

\end{itemize}

\end{proof}

\end{document}